\renewcommand{\phi}{\varphi}
\newcommand{\ind}[1]{\,^{#1}\kern -1 pt}
\newcommand{\da}{\kern -3 pt \downarrow}
\date{} 
\renewcommand{\a}{{\bf a}}
\renewcommand{\b}{{\bf b}}
\renewcommand{\c}{{\bf c}}
\renewcommand{\d}{{\bf d}}
\renewcommand{\u}{{\bf u}}
\renewcommand{\t}{{\bf t}}
\newcommand{\z}{{\bf z}}
\newcommand{\x}{{\bf x}}
\newcommand{\y}{{\bf y}}
\renewcommand{\r}{\mathbf{r}}
\newcommand{\V}{{\bf v}}
\newcommand{\w}{{\bf w}}
\newcommand{\m}{\mathrm{max}}
\newcommand{\M}{\mathcal{M}}
\newcommand{\fla}{\mbox{FLA}(\Omega)}
\newcommand{\FLA}{\mathrm{FLA}(\Omega)}
\newcommand{\FA}{\mathrm{FAM}(\Omega)}
\newcommand{\FI}{\mathrm{FIM}(\Omega)}
\newcommand{\FG}{\mathrm{FG}(\Omega)}
\newcommand{\R}{\mathbf{R}}
\title{Coherency,  free inverse monoids and free left ample monoids}
\subjclass[2010]{20 M 05, 20 M 30}
\thanks{The  authors acknowledge the support of EPSRC grant no.
EP/I032312/1. 
Research also partially supported by the Hungarian Scientific Research Fund (OTKA) grant no. K83219.}
\keywords{free monoids, $S$-acts, coherency}
\date{\today}
\author{Victoria Gould} 
\email{victoria.gould@york.ac.uk}
\author{Mikl\'{o}s Hartmann}
\email{miklos.hartmann@york.ac.uk}
\address{Department of Mathematics\\University
  of York\\Heslington\\York YO10 5DD\\UK}
\newtheorem{Lem}{Lemma}[section]
\newtheorem{Thm}[Lem]{Theorem}
\newtheorem{Cor}[Lem]{Corollary}
\newtheorem{cor}[Lem]{Corollary}
\newtheorem{lem}[Lem]{Lemma}
\newtheorem{prop}[Lem]{Proposition}
\theoremstyle{definition}
\newtheorem{Def}[Lem]{Definition}
\begin{document}

\begin{abstract}
A monoid $S$ is {\em right coherent} if every finitely generated subact of every finitely presented right $S$-act is finitely presented. The corresponding notion for a ring $R$ states that every finitely generated submodule of every finitely presented right $R$-module is finitely presented.  For  monoids (and rings) right coherency is a finitary property which determines the existence of a {\em model companion} of the class of right $S$-acts (right $R$-modules) and hence that the class of existentially closed right $S$-acts  (right $R$-modules) is axiomatisable.

Choo, Lam and Luft have shown that free rings are right (and left) coherent; the authors, together with Ru\v{s}kuc, have shown that groups, and free monoids, have the same properties. We demonstrate that free inverse monoids do not.

Any  free inverse monoid contains as a submonoid the free left ample monoid,  and indeed the free monoid, on the same set of generators.   The main objective of the paper is to show that the free left ample monoid {\em is} right coherent. 
Furthermore, by making use of the same techniques we show that both  free inverse and free left ample monoids satisfy $({\bf R})$, $({\bf r)}$, $({\bf L})$ and $({\bf l)}$, conditions arising from the axiomatisability of classes of right $S$-acts and of left $S$-acts.
\end{abstract}

\maketitle
\section{Introduction}\label{sec:intro}

Let $S$ be a monoid. A  {\em right $S$-act} is a set $A$ together with a map $A\times S\rightarrow A$ 
where $(a,s)\mapsto as$, such that
for all $a\in A$ and $s,t\in S$ we have $a1=a$ and $a(st)=(as)t$.  We also have the dual notion of a {\em left $S$-act}: where handedness for $S$-acts is not specified in this article we will always mean {\em right} $S$-acts. The study of  $S$-acts is, effectively, that of representations of the monoid $S$ by mappings of sets.

Clearly $S$-acts over a monoid $S$  are the non-additive 
analogue of $R$-modules over a (unital) ring $R$. Although the study of the two notions diverges considerably once technicalities set in, one can often begin by forming analagous notions and asking analagous questions. In this article we study   coherency for monoids.  A monoid $S$ is said to be {\em right coherent} if every finitely generated subact of every finitely presented right $S$-act is finitely presented. {\em Left coherency} is defined dually; $S$ is {\em coherent} if it is both left and right coherent. These notions are analogous to those for a  ring $R$ (where, of course, $S$-acts are replaced by $R$-modules).
Coherency is a finitary condition for rings and monoids, much weaker than, for example, the condition that says all finitely generated $R$-modules or $S$-acts be finitely presented.
As demonstrated  by Eklof and Sabbagh \cite{eklof:1971}, it is intimately related to the model theory of $R$-modules. The corresponding results for $S$-acts appear in  \cite{gould:1987}, the latter informed by the more general approach of Wheeler
\cite{wheeler:1976}.  

Chase \cite{chase:1960} gave internal conditions on a ring $R$ such that $R$ be right coherent. Correspondingly,  a monoid $S$ is right coherent if and only if for any finitely generated right congruence $\rho$ on $S$, and for any $a,b\in S$, the right congruence \[r(a\rho)=\{ (u,v)\in S\times S:au\,\rho\, av\}\] is finitely generated, and the subact $(a\rho)S\cap (b\rho)S$ of the right $S$-act $S/\rho$ is finitely generated \cite{gould:1992}. 

Choo, Lam and Luft \cite[Corollary 2.2 and remarks]{choo:1972} have shown that free  rings are coherent. The first author proved that free commutative monoids are coherent \cite{gould:1992} and recently the authors, together with Ru\v{s}kuc \cite{ghr:2013}, have shown that free monoids are coherent. 
The class of coherent inverse monoids contains all semilattices of groups \cite{gould:1992} and so, in particular, all groups and all semilattices. Certainly then free groups are coherent. It therefore becomes   natural to ask whether free inverse monoids are coherent, since, not only are they  free objects in a variety of unary algebras,  they are constructed from free groups acting on semilattices. In fact, as we show at the end of this article, coherency fails for free inverse monoids. This negative result motivates us to ask whether  free left ample monoids, which may be thought of as the `positive' part of free inverse monoids, being constructed from free monoids rather than free groups, are coherent. 
We argue that free left ample monoids  are right but not left coherent. The proof of  right coherency  is motivated by the methods in \cite{ghr:2013}, it is, however, rather more delicate. 

For the convenience of the reader we describe in  Section~\ref{sec:prelims}  the construction of the free inverse $\FI$, free left ample $\FLA$ and free ample $\FA$ monoids  on a set $\Omega$ from (prefix) closed subsets of the free  group $\FG$.  In Section~\ref{sec:fiRr} we focus on showing that the finitary properties ($\R$),($\r$),($\mathbf{L}$) and ($\mathbf{l}$) (defined therein) hold for $\FI$ and $\FLA$. These properties (which arise from considerations of first order axiomatisability of the class of strongly flat right and left $S$-acts - see \cite{gould:1987b}) are similar in flavour, although easier to handle, than coherency.
Our main work is in Section~\ref{sec:positive}, where we make a detailed analysis of finitely generated right congruences on  $\FLA$. This hard work is then put to use in Section~\ref{sec:flacoherent} where we show that  $\FLA$ is right coherent for any set $\Omega$. In Section~\ref{sec:constructions} we argue that the class of right coherent monoids is closed under retract. As a consequence of this, we have an alternative (albeit rather longer) proof that free monoids are coherent. Finally, in Section~\ref{sec:negative}, we show that $\FI$, $\FLA$ and $\FA$ are not coherent (for $|\Omega|\geq 2$).

\section{Preliminaries}\label{sec:prelims}

Let $\Omega$ be a non-empty set, let $\Omega^*$ be the free monoid and let $\FG$ be the free group on $\Omega$, 
respectively.
We follow standard practice and denote by $l(a)$ the length of a reduced word $a\in \FG$ and so, in particular, of $a\in\Omega^*$. 
The empty word will be denoted by $\epsilon$.
Of course, $\Omega^*$ is a submonoid of the free group $\mbox{FG}(\Omega)$, and in the sequel, if $a \in \Omega^*$, by $a^{-1}$ we mean the inverse of $a$ in $\mbox{FG}(\Omega)$. For any  $a\in \FG$ we denote by $a\da$ the set of
prefixes of the {\em reduced word} corresponding to $a$. Thus, if $a$ is reduced and $a=x_1\hdots x_n$
where $x_i\in \Omega\cup\Omega^{-1}$, then
\[a\da=\{ \epsilon, x_1,x_1x_2,\hdots, x_1x_2\hdots x_n\}.\]

The free inverse monoid  on $\Omega$ is denoted by $\FI$. The structure of $\FI$ was determined by Munn 
\cite{munn:1974} and Scheiblich \cite{scheiblich:1972}; the description we give below follows that of \cite{scheiblich:1972}, of which further details may be found in \cite{ho}. However, we keep the equivalent characterisation via Munn trees constantly in mind. 

Let $\mathcal{P}^f_c(\Omega)$ be the set of finite prefix closed subsets of $\FG$. If 
$A\in \mathcal{P}^f_c(\Omega)$, then - regarding elements of $A$ as reduced words - a {\em leaf} $a$ of $A$ is a word such that $a$ is not a proper prefix of any other word in $A$.
Note that $\FG$ acts in the obvious way on its semilattice of subsets under union.
Using this action we define 
\[
\FI=\{ (A,a):A\in \mathcal{P}^f_c(\Omega), a\in A\}.
\]
With binary operation given by
\[
(A,a)(B,b)=(A\cup aB, ab),
\]
$\FI$ is the free inverse monoid generated by $\Omega$.
The identity is $(\{ \epsilon\}, \epsilon)$, the inverse $(A,a)^{-1}$ of $(A,a)$ is  $(a^{-1}A,a^{-1})$ and
the natural injection of $\Omega\rightarrow \FI$ is given by
\[x\mapsto (\{ 1,x\},x).\]
We will make use of the fact that the free inverse monoid (in fact, every inverse monoid) possesses a left-right duality, by virtue of the anti-isomorphism given by $x\mapsto x^{-1}$.
For future purposes we remark that if $a\in FG(X)$ is reduced, then
\[a^{-1}\cdot a\,\, {\da}=(a^{-1}){\downarrow}.\]

Throughout this article we denote elements of $\FI$ by boldface letters, 
elements of $\mathcal{P}^f_c(\Omega)$ by capital letters, and elements of $\FG$ by lowercase letters. We write a  typical element of $\FI$ as ${\bf a}=(A,a)$; $A$ and $a$ will always denote the first and second coordinate of
$\a$, respectively. 
One exception to this convention is that we denote the identity $(\{ \epsilon\}, \epsilon)$ of $\FI$ by $\bf{1}$.

The free left ample monoid $\FLA$  on $\Omega$ is the submonoid of $\FI$ given by
\[\FLA=
\{ (A,a)\in \FI: A\subseteq \Omega^*\},\]
note that perforce, $a\in \Omega^*$ and we assume from the outset, when dealing with an element
$\a=(A,a)\in \FLA$, that all the words in $A$ are
reduced. 
We remark that FLA$(\Omega)$ also possesses a unary operation of $(A,a)^+=(A,\epsilon)=(A,a)(A,a)^{-1}$ and 
(as a unary semigroup) is the free algebra on $\Omega$ in both the variety of left restriction semigroups and the quasi-varieties of (weakly) left ample semigroups 
\cite{fountain:1991,gomes:2000,cornock:2011}.

Similarly, the free ample semigroup on $\Omega$ is the submonoid of $\FI$ given by
\[
\FA=\{(A,a) \in \FI: a \in \Omega^*\}.
\]

The free ample monoid possesses another unary operation defined by \[(A,a)^*=(A,a)^{-1} (A,a)=(a^{-1}A,1)\] and (as a biunary semigroup) is the free algebra on $\Omega$ in both the variety of restriction semigroups and the quasi-varieties of (weakly) ample semigroups. We remark here that the set of identities and quasi-identities definining the class of ample monoids is left-right dual, so that $\FA$ consequently also has a left-right duality.

Note that $\FLA$ is built from $\Omega^*$ (see \cite{gould:2009}),but to simplify notation we make use of the embedding of $\Omega^*$ into $\FG$.
However, when dealing with $\FLA$, we will use inverses only when we know that the resulting element lies in $\Omega^*$, for example we will write $u^{-1}v$ only if $u$ is a prefix of $v$.

Let $S$ be a semigroup, let $H \subseteq S \times S$ and let us denote by $\rho$ the right congruence generated by $H$.
Then it is well known that $s \mathrel{\rho} t$ if and only if there exists a so-called $H$-sequence
\[
s=c_1t_1, d_1t_1=c_2t_2, \ldots, d_nt_n=t
\]
connecting $s$ to $t$ where $(c_i,d_i) \in H \cup H^{-1}$ for all $1\leq i\leq n$.
If $n=0$, we interpret this sequence as being $s=t$.

\section{$\FI,\FA$ and $\FLA$ satisfy ($\bf{R}$), ($\bf{r}$), ($\bf{L}$) and ($\bf{l}$).}\label{sec:fiRr}

The conditions $(\bf{R})$ and $(\bf{r})$ \big(($\bf{L}$) and ($\bf{l}$)\big) are connected to the axiomatisability of certain classes of right (left) acts, and were introduced in \cite{gould:1987b}. Connected via axiomatisability to coherency, they are somewhat easier to handle.  
In this section we show that  the free inverse, the free ample and the free left ample monoids satisfy these conditions. In doing so we
develop some facility for handling products and factorisations in these monoids. 

\begin{Def}
Let $S$ be a monoid. We say that $S$ satisfies Condition $(\bf{r})$ if for every $s,t \in S$ the right ideal
\[
{\bf r}^S(s,t)=\{u \in S:su=tu\}
\]
is finitely generated.

The monoid $S$ satisfies Condition $(\bf{R})$ if for every $s,t \in S$ the $S$-subact
\[
{\bf R}^S(s,t)=\{(u,v):su=tv\}
\]
of the right $S$-act $S \times S$ is finitely generated.
(Note that we allow $\emptyset$ to be an ideal and an $S$-act.)

The conditions $(\bf{L})$ and $(\bf{l})$ are defined dually.
\end{Def}


\begin{lem}\label{First}
Let $A$ be a prefix closed subset of $\FG$ and let $g,h \in A$.
Then \[g((g^{-1}h)\da) \subseteq A.\]
\end{lem}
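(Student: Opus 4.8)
The plan is to exploit the fact that the Cayley graph of $\FG$ is a tree, so that prefix closedness of $A$ forces the whole geodesic from $\epsilon$ to any element of $A$ to lie in $A$. Concretely, I will verify that every element of $g((g^{-1}h)\da)$ is in fact a prefix of $g$ or a prefix of $h$, and then invoke prefix closedness to conclude.

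First I would split off the longest common prefix. Write $c$ for the longest common prefix of the reduced words $g$ and $h$, so that $g=cg'$ and $h=ch'$ with $g'$ and $h'$ beginning with distinct letters (allowing either to be empty). The key preliminary computation is that $g^{-1}h=(g')^{-1}h'$ and that this expression is already reduced: since the first letters of $g'$ and $h'$ differ, no cancellation occurs at the junction between $(g')^{-1}$ and $h'$, and each factor is separately reduced. Hence the reduced word $g^{-1}h$ reads as $(g')^{-1}$ followed by $h'$, and its prefixes fall into two families corresponding to the two arms of the geodesic from $g$ up to $c$ and from $c$ down to $h$.

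Next I would enumerate the prefixes of $g^{-1}h$ and multiply each by $g$ on the left. A prefix lying within the initial segment $(g')^{-1}$ has the form $(g')^{-1}$ with some terminal letters deleted; left multiplication by $g=cg'$ cancels the surviving part of $(g')^{-1}$ against the matching tail of $g'$, leaving a prefix of $g=cg'$, which lies in $A$ because $g\in A$ and $A$ is prefix closed. A prefix extending past $(g')^{-1}$ has the form $(g')^{-1}w$ with $w$ a prefix of $h'$; here left multiplication by $g=cg'$ collapses $g'(g')^{-1}$ to give $cw$, a prefix of $h=ch'$, which lies in $A$ since $h\in A$. The two extreme cases $p=\epsilon$ and $p=g^{-1}h$ simply return $g$ and $h$ themselves. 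This covers every prefix $p$ of $g^{-1}h$, so $gp\in A$ throughout and the inclusion follows.

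The only delicate point is the reducedness bookkeeping in the middle step: one must be sure that the partial cancellation of $(g')^{-1}$ against $g'$, and the full cancellation $g'(g')^{-1}=\epsilon$, are exactly as described, with no further interaction with $c$. This is where writing $g'=a_1\cdots a_k$ and $h'=b_1\cdots b_l$ explicitly and checking the index ranges pays off, and it is essentially the observation recorded just before the lemma that $a^{-1}\cdot(a\da)=(a^{-1})\downarrow$, applied here after peeling off the common prefix $c$.
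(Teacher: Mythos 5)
Your proof is correct and follows essentially the same route as the paper's: both peel off the longest common prefix $c$ (the paper calls it $x$), observe that $g^{-1}h=(g')^{-1}h'$ is reduced, and conclude that $g((g^{-1}h)\da)\subseteq g\da\cup h\da\subseteq A$. Your version merely makes explicit the case analysis of prefixes that the paper compresses into a single displayed inclusion.
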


\begin{proof}
Let $x$ be the longest common prefix of the reduced words $g,h \in \FG$.
That is, $g=xg'$ and $h=xh'$ where $g',h'$ do not have a common nonempty prefix.
Then 
\[
g((g^{-1}h)\da)=xg'(g'^{-1} h')\da \subseteq (xg')\da \cup (xh')\da=g\da \cup h\da \subseteq A.
\]
\end{proof}

\begin{lem}\label{lem:crack}
Let $S$ denote either $\FI$, $\FLA$ or $\FA$, let $\a\u=\b\V$ in $S$ and suppose that there exists a leaf $x \in A \cup aU=B \cup bV$ such that $x \not\in A \cup B$.
Then there exist $\u',\V',\z \in S$ such that $\left| A \cup aU'\right|<\left|A \cup aU\right|$,
\[
\a\u'=\b\V' \text{ and } (\u,\V)=(\u',\V')\z.
\]
Furthermore, if $\u=\V$ then $\u'=\V'$.
\end{lem}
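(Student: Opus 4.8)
The plan is to shrink the first coordinate $A\cup aU$ by exactly one element, stripping off the given leaf $x$ and recording the lost data in a right factor $\z$. Writing $\a=(A,a),\ \u=(U,u),\ \b=(B,b),\ \V=(V,v)$, the hypothesis $\a\u=\b\V$ unpacks into the set equality $A\cup aU=B\cup bV$ together with the group equality $au=bv$; set $p=au=bv$. Since $x\notin A$ but $x\in A\cup aU$ we have $x\in aU$, so $w:=a^{-1}x\in U$, and dually $w':=b^{-1}x\in V$. First I would check that $w$ is a leaf of $U$: if $w$ were a proper prefix of some $w''\in U$ then $x=aw$ would be a proper prefix of $aw''\in A\cup aU$, contradicting that $x$ is a leaf; likewise $w'$ is a leaf of $V$. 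The single identity driving everything is
\[
u^{-1}w=p^{-1}x=v^{-1}w',
\]
which follows from $u=a^{-1}p$, $w=a^{-1}x$ and their duals.

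I would then split on whether $x=p$. If $x=p$ then $w=u$ and $w'=v$, so $u,v$ are themselves leaves ending in the same last letter $c$ of $x$, which lies in $\Omega$ when $S$ is $\FLA$ or $\FA$ and in $\Omega\cup\Omega^{-1}$ when $S=\FI$. Writing $u=u_0c$, $v=v_0c$ I would take
\[
\u'=(U\setminus\{u\},u_0),\quad \V'=(V\setminus\{v\},v_0),\quad \z=(\{\epsilon,c\},c),
\]
the last being a generator (or its inverse) of $S$. Removing a leaf keeps the sets prefix closed, and a direct check gives $\u'\z=\u$ and $\V'\z=\V$; cancelling $c$ in $au_0c=bv_0c$ yields $au_0=bv_0$, while both first coordinates equal $(A\cup aU)\setminus\{x\}$ since $x\notin A\cup B$, so $\a\u'=\b\V'$ with $\left|A\cup aU'\right|=\left|A\cup aU\right|-1$.

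The remaining case is $x\neq p$, so $w\neq u$ and $w'\neq v$. For $S=\FI$ or $\FA$ the clean choice is the idempotent $\z=((p^{-1}x)\da,\epsilon)$ with $\u'=(U\setminus\{w\},u)$ and $\V'=(V\setminus\{w'\},v)$: Lemma~\ref{First} applied to $u,w\in U$ gives $u\,(u^{-1}w)\da\subseteq U$, and as this set contains $w$, right multiplication by $\z$ restores exactly the missing leaf, so $\u'\z=\u$ (dually $\V'\z=\V$), and the first coordinates again lose only $x$. The hard part will be the case $S=\FLA$, for here $p^{-1}x=u^{-1}w$ need not be a positive word and the above idempotent need not lie in $\FLA$. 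To get around this I would use that $au=bv$ forces $a,b$ to be comparable prefixes of $p$, say $b=as$, whence $u=sv$ and $w=sw'$; setting $q_1=u\wedge w$ and $q_2=v\wedge w'$ (longest common prefixes) one finds $q_1=sq_2$, so that $z:=q_1^{-1}u=q_2^{-1}v$ is a genuine positive common suffix of $u$ and $v$ and $\beta:=q_1^{-1}w=q_2^{-1}w'$ is positive. Then the positive factor $\z=(z{\da}\cup\beta{\da},z)$, with $\u'=(U\setminus\{w\},q_1)$ and $\V'=(V\setminus\{w'\},q_2)$, works: by prefix-closedness $q_1(z{\da}\cup\beta{\da})$ is the union of the prefixes of $u$ and of $w$ lying above $q_1$, hence lies in $U$ and contains $w$, giving $\u'\z=\u$ and dually $\V'\z=\V$, while the first coordinates once more drop only $x$.

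Finally, for the furthermore clause, if $\u=\V$ then $u=v$ and the equation becomes $au=bu$, so $a=b$ after right cancellation in $\FG$; hence $w=a^{-1}x=b^{-1}x=w'$ (and $s=\epsilon$, $q_1=q_2$ in the last case), so in every case the formulas defining $\u'$ and $\V'$ coincide and $\u'=\V'$.
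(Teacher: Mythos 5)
Your proposal is correct and takes essentially the same route as the paper's proof: the same case split into $x\neq au$ (removed via the idempotent $\z=(((au)^{-1}x){\da},\epsilon)$, justified by Lemma~\ref{First}) and $x=au$ (stripping the common last letter with $\z=(\{\epsilon,c\},c)$), your inline greatest-common-prefix construction for $\FLA$ being exactly the content of Cases (1) and (2) of Lemma~\ref{Crack}, to which the paper defers that case. The only step you should spell out, for $S=\FI$, is why $a^{-1}x$ is a leaf of $U$ and why $u,v$ end in the last letter of $x$: as in the paper, this needs the observation that $x\notin A$ forces $x$ not to be a prefix of $a$, so the last letter of $x$ survives in the reduced form of $a^{-1}x$ and no cancellation can interfere with the prefix comparisons you make.
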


\begin{proof}
Clearly $\u\neq \mathbf{1}$. If $S=\FLA$ then it is easy to see that $x=ak$ where $k\in \Omega^*\setminus \{ \epsilon\}$ is a leaf of
$U$. 
The statement for $S$ now follows from Lemma \ref{Crack}. We therefore consider the case where $S=\FI$ of
$S=\FA$. 

We can suppose that the words $x,a,b,u$ and $v$ are reduced.
Note that $x \not\in  A \cup B$ implies that $x\in  aU \cap bV$.
We have that $x \not \in A$ so in particular, $x$ is not a prefix of $a$.
In this case the last letter of $x$ does not cancel in the product $a^{-1}x$.
Now if $a^{-1}x$ is not a leaf of $U$ then there exists $c \in \Omega \cup \Omega^{-1}$, different from the  last letter of $x$, such that $a^{-1}xc \in U$. In this case $xc \in A \cup aU$, contradicting that $x$ is a leaf of $A \cup aU$.
So we have shown that $a^{-1}x$ is a leaf of $U$.
Similarly $b^{-1}x$ is a leaf of $V$.
There are two different cases to consider.

Case (i): $x \neq au$.
Let $z=(au)^{-1}x$.
Note that $u,a^{-1}x \in U$, which is prefix closed, and $z=(au)^{-1}x=u^{-1}\cdot a^{-1}x$.
Lemma \ref{First} then gives that $u(z\da) \subseteq U$.
Since $uz=a^{-1}x$, we have that
\[
(U,u)=(U \setminus \{a^{-1}x\},u)(z\da,1).
\]
Furthermore, $z=(au)^{-1}x=(bv)^{-1}x$, so similarly we have that
\[
(V,v)=(V \setminus \{b^{-1}x\},v)(z\da,1).
\]
Also, $A \cup a(U \setminus \{a^{-1}x\})=B \cup b(V \setminus \{b^{-1}x\})=(A \cup aU) \setminus \{x\}$, so we have that
\[
(A,a)(U\setminus\{a^{-1}x\},u)=(B,b)(V \setminus\{b^{-1}x\},v).
\]
So if we let 
\[
(U',u')=(U \setminus \{a^{-1}x\},u), (V',v')=(V \setminus\{b^{-1}x\},v) \text{ and }\z=(z\da,z),
\]
then (noticing that if $(U,u)=(V,v)$ we must have that $a=b$), the statements of the lemma are satisfied.

Case (ii): $x=au=bv$. 
Since $x \not\in A \cup B$, but $a,b \in A \cup B$ we have that $u,v\neq\epsilon$.
In case $S=\FA$, this implies that the last letters of $x,u$ and $v$ are the same which we denote by $z\in \Omega$.
Note that $uz^{-1},vz^{-1} \in \Omega^*$ in this case.

If $S=\FI$ then let $z$ be the last letter of the reduced word $x$.
If $z$ is not the last letter of $u$ then in the product $x=au$, all letters of $u$ must cancel, so $a=xu^{-1}$ where $xu^{-1}$ is reduced.
However, this contradicts the fact that $x$ is a leaf, showing that the last letter of the reduced word $u$ is $z$.
Similarly the last letter of the reduced word $v$ is $z$.

In both the cases $S=\FA$ and $S=\FI$, $u \neq uz^{-1}$ and $u \neq \epsilon$ imply that $uz^{-1} \in U \setminus \{u\}$, and similarly $vz^{-1} \in V \setminus \{v\}$.
Now let $\u'=(U \setminus\{u\},uz^{-1}), \V'=(V \setminus \{v\},vz^{-1})$ and $\z=(\{1,z\},z)$.
Then 
\[
(U,u)=(U',u')(\{1,z\},z),\, (V,v)=(V',v'),(\{1,z\},z)
\]
and
\[
(A,a)(U',u')=\big((A\cup aU) \setminus \{au\},au'\big)=(B,b)(V',v').
\]
Furthermore, if $\u=\V$ then clearly $\u'=\V'$, which finishes the proof.

\end{proof}

\begin{prop}\label{lem:fi}
The monoids $\FI$, $\FA$ and $\FLA$ satisfy $(\R)$ and $(\r)$.
\end{prop}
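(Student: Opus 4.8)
The plan is to fix $\a=(A,a)$ and $\b=(B,b)$ in $S$, where $S$ is any one of $\FI$, $\FA$ or $\FLA$, and to exhibit a finite generating set for the solution sets directly, using Lemma~\ref{lem:crack} as a descent tool. For $(\R)$, a pair $\u=(U,u),\V=(V,v)$ lies in ${\bf R}^S(\a,\b)$ exactly when $au=bv$ and $A\cup aU=B\cup bV$; write $C$ for this common prefix closed set. I call such a solution \emph{reduced} if every leaf of $C$ lies in $A\cup B$, that is, if it admits no bad leaf in the sense of Lemma~\ref{lem:crack}. The reduced solutions will be the claimed finite generating set, and the whole argument is uniform in $S$ since Lemma~\ref{lem:crack} already covers all three monoids.

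First I would show that every solution reduces to a reduced one by right multiplication. If $(\u,\V)$ is a solution that is not reduced, then $C$ has a leaf $x\notin A\cup B$, so Lemma~\ref{lem:crack} supplies $\u',\V',\z$ with $\a\u'=\b\V'$, with $(\u,\V)=(\u',\V')\z$, and with $|A\cup aU'|<|A\cup aU|$. Since $|A\cup aU|$ is a non-negative integer, iterating this process terminates after finitely many steps at a reduced solution $(\u_0,\V_0)$, and collecting the successive $\z$'s into a single element of $S$ yields $(\u,\V)=(\u_0,\V_0)\z$. Hence the set of reduced solutions generates ${\bf R}^S(\a,\b)$ as an $S$-subact of $S\times S$.

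It then remains to prove that there are only finitely many reduced solutions. The set $C$ is a finite prefix closed set all of whose leaves lie in the fixed finite set $A\cup B$; since a finite prefix closed set is the union $\bigcup_\ell \ell\da$ of the prefix closures of its leaves $\ell$, the set $C$ is determined by its leaf set, so there are at most $2^{|A\cup B|}$ possibilities for $C$. For each fixed $C$, the requirement $A\cup aU=C$ forces $U\subseteq a^{-1}C$, a finite set, so there are only finitely many prefix closed $U$ and only finitely many choices of $u\in U$; the identical bound holds for $\V$. Thus the number of reduced solutions is finite and $(\R)$ follows (the solution set being empty is permitted by the definition).

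Finally, $(\r)$ is handled by restricting the same argument to the diagonal. Here ${\bf r}^S(\a,\b)=\{\u:\a\u=\b\u\}$ corresponds to solutions with $\u=\V$, and the final clause of Lemma~\ref{lem:crack} guarantees that if $\u=\V$ then $\u'=\V'$, so the descent stays on the diagonal. Exactly as above, the reduced diagonal solutions form a finite set, and they generate ${\bf r}^S(\a,\b)$ as a right ideal. I expect the main point requiring care to be the finiteness count of reduced solutions, together with the observation that the descent halts precisely when no bad leaf remains; once those are in place the proposition drops out, the genuinely technical work having already been absorbed into Lemma~\ref{lem:crack}.
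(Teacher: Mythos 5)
Your proof is correct and is essentially the paper's own argument: both use Lemma \ref{lem:crack} as the descent step on $\left|A\cup aU\right|$, and your ``reduced'' solutions coincide exactly with the paper's generating set $X=\{(\u,\V):\a\u=\b\V,\ A\cup aU=A\cup B\}$, since $A\cup B$ is prefix closed and contained in $A\cup aU$, so having all leaves of $A\cup aU$ inside $A\cup B$ is the same as equality. The only cosmetic differences are that you phrase the induction as an iterated descent and that you spell out the finiteness of the generating set, which the paper leaves implicit.
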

\begin{proof}
Let $S$ denote $\FI$, $\FA$ or $\FLA$ and let $\a,\b\in S$. 
We claim that the finite set
\[
X=\{(\u,\V):\a\u=\b\V,\, A\cup aU = A \cup B\}
\]
generates $\mathbf{R}(\a,\b)$.
Let $(\u,\V) \in \R(\a,\b)$.
We prove by induction on the size of $A\cup aU$ that $(\u,\V) \in X \cdot S$.
Note that $A \cup aU=B \cup bV$ implies $A \cup B \subseteq A \cup aU$, so that if $\left| A \cup aU\right| \leq \left|A \cup B\right|$, then necessarily $A \cup aU=B \cup bV=A \cup B$, which shows that $(\u,\V) \in X$.

Suppose now that we have that there exists an $n \geq \left|A \cup B\right|$ such that whenever $\left|A \cup aU\right|\leq n$ and $(\u,\V) \in \R(\a,\b)$, then necessarily $(\u,\V) \in X \cdot S$.
Now let $(\u,\V) \in \R(\a,\b)$ be such that $\left| A \cup aU\right|=n+1$.
Since $(\u,\V) \in \R(\a,\b)$ we have that $A \cup B \subseteq A \cup aU=B \cup bV$, and since $n+1>\left|A \cup B\right|$, there exists $x \in A \cup aU=B \cup bV$ such that $x \not\in A\cup B$.
This implies that $x \in aU \cap bV$.
We can also assume that $x$ is a leaf of $A \cup aU=B \cup bV$.
Then Lemma \ref{lem:crack} implies that there exist elements $\u',\V',\z \in S$ such that $\left| A \cup aU'\right|<\left|A \cup aU\right|$ and
\[
(\u',\V') \in \R(\a,\b),\, (\u,\V)=(\u',\V')\z.
\]
In this case the induction hypothesis implies that $(\u',\V') \in X\cdot S$, so that $(\u,\V) \in X \cdot  S$ as required.

For $(\r)$, the proof is entirely similar.
We show that the set
\[
Y=\{\u \in S: \a\u=\b\u,\, A\cup aU=A \cup B\}
\]
generates $\r(s,t)$, making particular use of the final statement of Lemma \ref{lem:crack}. 
\end{proof}

The free inverse monoid and the free ample monoid are left-right dual, so from  the dual of Lemma \ref{lem:crack} they satisfy $(\bf{L})$ and $(\bf{l})$. To show that
$\FLA$ satisfies $(\bf{L})$ and $(\bf{l})$, we first prove a result corresponding to Lemma~\ref{lem:crack}. 

\begin{Lem}\label{lem:crackleft}
Let $\u\a=\V\b$ in $\FLA$ and suppose that there exists $x \in U \cup uA=V \cup vB$ such that $x$ is either a leaf, or $x=\epsilon$ and every element of $(U \cup uA) \setminus \{\epsilon\}$ has a common nonempty prefix (this corresponds to a tree having a root with degree $1$).
Furthermore, suppose that $x \not\in uA \cup vB$.
Then there exist $\u',\V',\z \in \FLA$ such that $\left| U' \cup u'A\right|<\left|U \cup uA\right|$,
\[
\u'\a=\V'\b \text{ and } (\u,\V)=\z(\u',\V').
\]
Furthermore, if $\u=\V$ then $\u'=\V'$.
\end{Lem}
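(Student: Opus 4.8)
The plan is to treat the two alternatives for $x$ separately, in each case producing $\u',\V'$ by literally deleting the distinguished vertex from the first coordinates of $\u$ and $\V$ and then recovering $\u$ and $\V$ by a single left factor $\z$. First I would unravel the hypothesis: in $\FLA$ the equality $\u\a=\V\b$ says precisely that $U\cup uA=V\cup vB$ and $ua=vb$. Since $x\in U\cup uA=V\cup vB$ while $x\notin uA\cup vB$, it follows at once that $x\in U\cap V$; and when $x$ is a leaf of $U\cup uA$ it is also a leaf of the smaller prefix closed sets $U$ and $V$. In the second alternative $x=\epsilon$, which forces $u,v\neq\epsilon$, because $\epsilon\in wA$ only when $w=\epsilon$.

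For the leaf case I would set
\[
\u'=(U\setminus\{x\},u),\quad \V'=(V\setminus\{x\},v),\quad \z=(x\da,\epsilon).
\]
Deleting the leaf $x$ (which is necessarily nonempty, since otherwise $U\cup uA=\{\epsilon\}$ would force $u=\epsilon$ and hence $x\in uA$) keeps $U\setminus\{x\}$ and $V\setminus\{x\}$ prefix closed, and since $u\in uA$, $v\in vB$ while $x\notin uA\cup vB$ we have $u\neq x\neq v$, so $\u',\V'\in\FLA$. The one structural observation that makes everything work is that left multiplication by the idempotent $(x\da,\epsilon)$ simply adjoins $x\da$ to the first coordinate without any shift, so that $\z\u'=(x\da\cup(U\setminus\{x\}),u)=(U,u)=\u$, using $x\da\subseteq U$; likewise $\z\V'=\V$. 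The remaining equalities are routine: because $x\notin uA$ we get $U'\cup u'A=(U\cup uA)\setminus\{x\}$ and similarly on the $V$-side, whence $\u'\a=\V'\b$ and $|U'\cup u'A|=|U\cup uA|-1$; finally $\u=\V$ gives $U=V$, $u=v$ and hence $\u'=\V'$.

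For the root case $x=\epsilon$ I would let $c$ be the unique first letter shared by every nonempty word of $U\cup uA$ and peel it off, taking
\[
\u'=(c^{-1}(U\setminus\{\epsilon\}),c^{-1}u),\quad \V'=(c^{-1}(V\setminus\{\epsilon\}),c^{-1}v),\quad \z=(\{\epsilon,c\},c).
\]
Here $u,v\neq\epsilon$ begin with $c$, so $c\in U\cap V$ and the shifted sets are legitimate prefix closed subsets still containing $\epsilon$; a direct computation gives $\z\u'=(\{\epsilon,c\}\cup(U\setminus\{\epsilon\}),u)=(U,u)=\u$ and similarly $\z\V'=\V$. Since $\epsilon\notin uA\cup vB$, multiplying out yields $U'\cup u'A=c^{-1}\big((U\cup uA)\setminus\{\epsilon\}\big)$, which has one fewer element than $U\cup uA$ and agrees with the corresponding set on the $V$-side, so $\u'\a=\V'\b$; the final clause is again immediate.

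The computations are mechanical, so there is no serious obstacle; the only points requiring care are checking $\FLA$-membership of $\u',\V'$ (prefix closure of $U\setminus\{x\}$ and of the shifted sets, and that the distinguished vertices $u,v$ are not the deleted one) and, above all, choosing $\z$ on the correct side. It is here that the left-handed statement is actually cleaner than its right-handed counterpart, Lemma~\ref{lem:crack}: because left multiplication by $(x\da,\epsilon)$ adds to the first coordinate without any shift, the leaf to be reinstated lies directly in $U$ and $V$, so no appeal to Lemma~\ref{First} (which was needed in the right-handed argument to control the shifted branch $u(z\da)$) is necessary.
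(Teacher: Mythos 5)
Your proposal is correct and follows essentially the same route as the paper's own proof: in the leaf case you delete $x$ and take $\z=(x\da,\epsilon)$, and in the root case you peel off the common first letter $c$ with $\z=(\{\epsilon,c\},c)$ and shift $U\setminus\{\epsilon\}$, $V\setminus\{\epsilon\}$ by $c^{-1}$ — exactly the paper's constructions, with the same verifications. Your additional checks (that a leaf $x$ must be nonempty, and that $u\neq x\neq v$ so the distinguished vertices survive deletion) are sound and only make the argument more explicit.
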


\begin{proof}  Note that as $x\notin  uA \cup vB$, $x\neq u$ and $x\neq v$.
If $x$ is a leaf, then let $\z=(x\da,1), U'=U \setminus \{x\}, u'=u, V'=V \setminus \{x\}, v'=v$.
In this case
\[
\u'\a=\big((U \cup uA) \setminus \{x\},ua\big)=\big((V \cup vB)\setminus \{x\},vb\big)=\V'\b, \z\u'=\u, \z\V'=\V.
\]
Furthermore, if $\u=\V$ then of course $\u'=\V'$.

If $x=\epsilon$ then $x \not \in uA \cup vB$ implies $u,v \neq \epsilon$.
Let $z$ be the common first letter of elements of $(U\cup uA) \setminus \{\epsilon\}$ and let $\z=(\{\epsilon,z\},z)$.
Then if we set $(U',u')=(z^{-1} (U \setminus \{\epsilon\}),z^{-1}u)$ and $(V',v')=(z^{-1} (V \setminus \{\epsilon\},z^{-1}v)$ then
\[
U' \cup u'A=z^{-1} ( U \setminus \{\epsilon\}) \cup z^{-1}uA=z^{-1}\big((U \cup uA) \setminus \{\epsilon\}\big)=\ldots=V' \cup v'B,
\]
which shows that $\u'\a=\V'\b$.
Also we have
\[
Z \cup zU'=\{\epsilon,z\} \cup (U\setminus \{\epsilon\})=U,
\]
because $z \in U$ (being the first letter of $u$).
As a consequence $\z\u'=\u$ and similarly $\z\V'=\V$ also.
Lastly, if $\u=\V$ then clearly $\u'=\V'$ which finishes the proof.
\end{proof}

\begin{prop}
The free inverse monoid $\FI$, the free ample monoid $\FA$ and the free left ample monoid $\FLA$  satisfy $(\bf{L})$ and $(\bf{l})$.
\end{prop}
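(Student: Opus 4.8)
The plan is to deal with $\FI$ and $\FA$ at once by duality, and then to treat $\FLA$ by imitating the proof of Proposition~\ref{lem:fi}, with Lemma~\ref{lem:crackleft} playing the role of Lemma~\ref{lem:crack}. For $\FI$ and $\FA$ the left--right anti-isomorphism $x\mapsto x^{-1}$ carries left subacts to right subacts and turns the sets $\mathbf{L}(\a,\b)=\{(\u,\V):\u\a=\V\b\}$ and $\mathbf{l}(\a,\b)=\{\u:\u\a=\u\b\}$ into sets of the shape $\R(\cdot,\cdot)$ and $\r(\cdot,\cdot)$; since it preserves finite generation, $(\mathbf{L})$ and $(\mathbf{l})$ follow immediately from Proposition~\ref{lem:fi}. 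This is exactly the remark preceding the statement and requires no extra work, so the real content concerns $\FLA$.

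For $\FLA$, fix $\a,\b$ and consider the left subact $\mathbf{L}(\a,\b)$ of $\FLA\times\FLA$. Mirroring Proposition~\ref{lem:fi}, I would take
\[
X=\{(\u,\V):\u\a=\V\b \text{ and no reduction of Lemma~\ref{lem:crackleft} applies to }(\u,\V)\}
\]
and show that $X$ is finite and generates $\mathbf{L}(\a,\b)$. Generation is the routine half: given $(\u,\V)\in\mathbf{L}(\a,\b)$, induct on $|U\cup uA|$. If $(\u,\V)\in X$ we are done; otherwise Lemma~\ref{lem:crackleft} yields $\u',\V',\z$ with $|U'\cup u'A|<|U\cup uA|$, $\u'\a=\V'\b$ and $(\u,\V)=\z(\u',\V')$, so by induction $(\u',\V')$ lies in the subact generated by $X$, and hence so does $(\u,\V)$.

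The hard part is the finiteness of $X$. In Proposition~\ref{lem:fi} this was immediate, since the irreducible pairs satisfied $A\cup aU=A\cup B$, a single fixed finite set; here the rigid part $uA\cup vB$ of the tree $T:=U\cup uA=V\cup vB$ depends on the unknowns $u,v$, and a genuine argument is needed. The key is to play the two clauses of Lemma~\ref{lem:crackleft} against each other. Suppose $(\u,\V)\in X$ with $u,v\neq\epsilon$. Non-reducibility under the leaf clause forces every leaf of $T$ to lie in $uA\cup vB$; as $T$ is the prefix closure of its leaves this gives $T\subseteq u\da\cup uA\cup v\da\cup vB$. From $ua=vb$ in $\Omega^*$ the words $u$ and $v$ are both prefixes of the common word $ua=vb$, hence prefix-comparable, so being nonempty they share a first letter $z$; consequently every element of $T$ other than $\epsilon$ begins with $z$, i.e.\ the root $\epsilon$ has degree one. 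But $u,v\neq\epsilon$ gives $\epsilon\notin uA\cup vB$, so the root-of-degree-one clause of Lemma~\ref{lem:crackleft} applies, contradicting $(\u,\V)\in X$. Hence every pair of $X$ has $u=\epsilon$ or $v=\epsilon$; in either case $ua=vb$ determines the nonempty second coordinate uniquely as a word of length $|l(a)-l(b)|$, and then the leaf clause confines $T$ to the fixed finite set $u\da\cup uA\cup v\da\cup vB$. Thus $U$ and $V$ range over subsets of a fixed finite set, and $X$ is finite.

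Condition $(\mathbf{l})$ is handled identically, replacing $X$ by $Y=\{\u:\u\a=\u\b \text{ and no reduction applies}\}$ and invoking the final sentence of Lemma~\ref{lem:crackleft} (that $\u=\V$ forces $\u'=\V'$) to keep the two coordinates equal throughout the induction; the finiteness argument is word-for-word the same.
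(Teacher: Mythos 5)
Your proof is correct, and for $\FLA$ it rests on the same key lemma (Lemma~\ref{lem:crackleft}) and the same induction on $\left|U \cup uA\right|$ as the paper, but with the work distributed in the opposite way. The paper first normalises $b=ya$ (so every pair in $\mathbf{L}(\a,\b)$ has $u=vy$), takes the concretely defined set $X=\{(\u,\V):\u\a=\V\b,\ U\cup uA=B\cup yA\}$, whose finiteness is immediate, and spends all its effort on generation: in the inductive step it must exhibit an admissible $x$, arguing that if every leaf of $U\cup uA$ lies in $uA\cup vB$ then $v\neq\epsilon$ and every nonempty element of $U\cup uA$ shares the first letter of $v$, so that $x=\epsilon$ works. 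You instead take $X$ to be the set of irreducible pairs, making generation trivial, and put the effort into finiteness; your dichotomy ($u=\epsilon$ or $v=\epsilon$, since otherwise $u$ and $v$, being prefixes of the common word $ua=vb$, share a first letter and the root clause fires) is exactly the paper's combinatorial step run in contrapositive form. The two generating sets essentially agree: if, say, $v=\epsilon$, then $u\in B$ and $U\cup uA=V\cup B\supseteq B\cup uA\cup u\da$, while the leaf clause gives the reverse inclusion, so your irreducible pairs automatically satisfy the paper's condition $U\cup uA=B\cup yA$. What your organisation buys is that no preliminary normalisation is needed; what the paper's buys is that finiteness never has to be argued.

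One small repair to your opening: for $\FA$ the duality cannot be realised by $x\mapsto x^{-1}$, since $\FA$ is not closed under inversion --- indeed $(A,a)^{-1}=(a^{-1}A,a^{-1})$ and $a^{-1}\notin\Omega^*$ unless $a=\epsilon$. The left-right duality of $\FA$ comes instead from the self-duality of its defining identities and quasi-identities, as noted in Section~\ref{sec:prelims}; with that reading, your reduction of $(\mathbf{L})$ and $(\mathbf{l})$ for $\FI$ and $\FA$ to Proposition~\ref{lem:fi} is exactly the paper's.
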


\begin{proof} We have already mentioned that $\FI$ and  $\FA$ must satisfy $(\bf{L})$ and $(\bf{l})$.
For $\FLA$, let $\a,\b \in \FLA$.
Then either $\mathbf{L}(\a,\b)$ is empty or one of $a$ and $b$ is a suffix of the other.
Without loss of generality we can assume that $b=ya$ for some $y \in \Omega^*$.
In this case we claim that the finite set
\[
X=\{(\u,\V): \u\a=\V\b, U \cup uA = B \cup yA\}
\]
generates $\bf{L}(\a,\b)$.
Note that if $(\u,\V) \in \bf{L}(\a,\b)$ then necessarily $u=vy$ so from the equation $U\cup vyA=V \cup vB$ we conclude that $v (B \cup yA) \subseteq U\cup uA$.
As a consequence we see that if $\left| U \cup uA\right|\leq \left| B \cup yA\right|$ then $U \cup uA=v(B \cup yA)$, which implies that $v=\epsilon$ so that $U \cup uA=B \cup yA$ and $(\u,\V) \in X$.

Suppose now that there exists an $n \geq \left|B \cup yA\right|$ such that whenever $\left|U\cup uA\right|\leq n$ and $(\u,\V) \in \bf{L}(\a,\b)$, then necessarily $(\u,\V) \in \FLA \cdot X$.
Now let $(\u,\V) \in \bf{L}(\a,\b)$ be such that $\left|U \cup uA\right|=n+1$.
Note that $ua=vya$ implies that $u=vy$.
Then $U\cup vyA=V \cup vB$, so $v(B \cup yA) \subseteq U \cup vyA$.
However, $\left|v(B \cup yA)\right|=\left|B \cup yA\right| < \left| U \cup vyA\right|$, so $U \cup uA \neq v(B\cup yA)=uA \cup vB$.

If there exists a leaf of $U \cup uA$ which is not contained in $uA \cup vB$ then let $x$ be one such leaf.
However, if there is no such leaf then that means that every leaf of $U \cup uA$ is contained in $v(B \cup yA)$.
If $v=\epsilon$ then as $y\in B$, $v(B \cup yA)$ is  prefix closed so $U \cup uA=v(B \cup yA)=uA \cup vB$, which is a contradiction.
So $v \neq \epsilon$, and we have that all leaves of $U \cup uA$ have $v$ as a prefix.
This can only happen if $U \cup uA=v\da \cup vC$ for some prefix closed set $C$, which shows that every element of $(U \cup uA) \setminus \{\epsilon\}$ has the same first letter as $v$.
In this case let $x=\epsilon$.
Then Lemma \ref{lem:crackleft} implies that there exists $\u',\V',\z \in \FLA$ such that $\left|U' \cup u'A\right|<\left|U \cup uA\right|$,
\[
(\u',\V') \in \bf{L}(\a,\b) \text{ and } (\u,\V)=\z(\u',\V').
\]
In this case the induction hypothesis implies that $(\u',\V') \in \FLA \cdot X$ and so we have $(\u,\V) \in \FLA \cdot X$ as required.

For $(\bf{l})$, the proof is entirely similar, namely the finite set
\[
Y=\{U \in S: \u\a=\u\b, U \cup uA=B \cup yA\}
\]
generates $\bf{l}(\a,\b)$ if $b=ya$.

\end{proof}

\section{$\FLA$: analysis of $H$-sequences}\label{sec:positive}

In order to show that  $\FLA$ is right coherent, we make a careful examination of 
$H$-sequences for finite sets $H\subseteq \FLA \times \FLA$. 

\begin{Def} Let $\a\in \FLA$. 
\begin{enumerate}
\item[(i)] 
The \emph{weight} $w(\a)$ of ${\bf a}$ is defined by $w(\a)=\left|A\right|-1 + l(a)$. 
\item[(ii)] The \emph{diameter} $d(\a)$ of ${\bf a} $ is 
defined by $d(\a)=\text{max }\{l(u):u\in A\}$.
\end{enumerate}
\end{Def}

The following lemma states the most important basic properties of the weight function. 

\begin{Lem}\label{Basic}
Let  $\a,\b,\c,\a_1,\ldots,\a_n \in \FLA$. Then 
\begin{enumerate}
\item[\rm{(W0)}] $w(\a)=0$ if and only if $\a=\bf{1}$;
\item[\rm{(W1)}] $w(\a),w(\b)\leq w(\a\b)\leq w(\a)+w(\b)$;
\item[\rm{(W2)}] $w(\a\b)=w(\a)$ if and only if $\a \b= \a$, and this is equivalent to $\b\in E(\FLA) $ with $\a\leq_{\mathcal{L}} \b$.
\end{enumerate}
\end{Lem}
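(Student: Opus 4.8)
The plan is to reduce everything to the single combinatorial identity governing how weight behaves under the product $(A,a)(B,b)=(A\cup aB,ab)$. Writing $\a=(A,a)$ and $\b=(B,b)$ with $A,B$ finite prefix closed subsets of $\Omega^*$, $a\in A$, $b\in B$, and recalling that in $\FLA$ the second coordinates lie in $\Omega^*$ so that $l(ab)=l(a)+l(b)$, I would first record
\[
w(\a\b)=\left|A\cup aB\right|-1+l(a)+l(b).
\]
From this, (W0) is immediate: since $A$ is nonempty and prefix closed we have $\epsilon\in A$, so $\left|A\right|\geq 1$ and $l(a)\geq 0$, whence $w(\a)=\left|A\right|-1+l(a)=0$ forces $\left|A\right|=1$ and $l(a)=0$, i.e. $A=\{\epsilon\}$ and $a=\epsilon$, which is exactly $\a=\mathbf{1}$; the converse is a one-line computation.

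For (W1) I would establish the two lower bounds and the upper bound by direct subtraction. Left multiplication by $a$ is a bijection of $\FG$, so $\left|aB\right|=\left|B\right|$, and therefore
\[
w(\a\b)-w(\a)=\left|aB\setminus A\right|+l(b)\geq 0,\quad w(\a\b)-w(\b)=\left|A\setminus aB\right|+l(a)\geq 0.
\]
The upper bound rests on the key identity
\[
w(\a)+w(\b)-w(\a\b)=\left|A\right|+\left|aB\right|-\left|A\cup aB\right|-1=\left|A\cap aB\right|-1,
\]
which is nonnegative because $a=a\epsilon\in A\cap aB$ (using $a\in A$ and $\epsilon\in B$); this is where the defining hypotheses $a\in A$, $\epsilon\in B$ do the real work.

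For (W2) I would read off the equality case from the displays above. Since both $\left|aB\setminus A\right|$ and $l(b)$ are nonnegative, $w(\a\b)=w(\a)$ holds if and only if $aB\subseteq A$ and $b=\epsilon$; comparing coordinates, this is precisely the condition $\a\b=(A\cup aB,ab)=(A,a)=\a$. It then remains to identify this with the algebraic statement. A short check shows the idempotents of $\FLA$ are exactly the elements $(B,\epsilon)$ (squaring $(B,b)$ forces $b=\epsilon$, and every $(B,\epsilon)$ is idempotent), so the condition $b=\epsilon$ says exactly $\b\in E(\FLA)$. Finally, if $\a\b=\a$ then $\a\in S^1\b$, giving $\a\leq_{\mathcal{L}}\b$; conversely, if $\b$ is idempotent and $\a=\c\b$ for some $\c\in S^1$, then $\a\b=\c\b\b=\c\b=\a$. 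This yields the full chain $w(\a\b)=w(\a)\iff\a\b=\a\iff\bigl(\b\in E(\FLA)\text{ and }\a\leq_{\mathcal{L}}\b\bigr)$.

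This lemma is essentially bookkeeping once the identity $w(\a)+w(\b)-w(\a\b)=\left|A\cap aB\right|-1$ is in hand, so I do not expect a genuine obstacle. The only point needing care is the last equivalence in (W2): one must correctly recognise the idempotents of $\FLA$ and be precise about which direction of the $\mathcal{L}$-order is used, so that the chain $\a\b=\a\Leftrightarrow(\b\in E(\FLA)\wedge\a\leq_{\mathcal{L}}\b)$ is airtight rather than merely plausible.
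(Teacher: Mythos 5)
Your proposal is correct and takes essentially the same route as the paper: both arguments compute directly from $(A,a)(B,b)=(A\cup aB,ab)$, derive (W1) from $\left|aB\right|=\left|B\right|$ together with the key observation $a\in A\cap aB$ (your inclusion--exclusion identity is just a sharpened form of the paper's inequality $\left|A\cup aB\right|\leq\left|A\right|+\left|B\right|-1$), and read off (W2) from the equality case, identifying idempotents as the elements $(B,\epsilon)$ and using the standard $\mathcal{L}$-order argument. No gaps; your treatment is merely a little more explicit than the paper's.
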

\begin{proof} The proof of (W0) is clear. 

For (W1), let
$\a=(A,a)$ and $\b=(B,b)$, so that $\a \b =(A\cup aB,ab)$. Then 
\[w(\a \b)=|A\cup aB|-1+l(ab)\]
and as $|A\cup aB|\geq |A|, |aB|$ where $|aB|=|B|$ and $l(ab)\geq l(a),l(b)$, we have
$w(\a),w(\b)\leq w(\a\b)$.

On the other hand, the second inequality for (W1) follows from the observation that
as $a\in A\cap aB$ we have \[|A\cup aB|=|A|+|aB\setminus A|\leq |A|+|aB|-1
=|A|+|B|-1.\] 

 Clearly $|A\cup aB|\geq |A|$ and
$l(ab)\geq l(a)$, so that if $w(\a\b)=w(\a)$, we must have
$|A\cup aB|=|A|$ and $l(b)=0$. Hence $b=\epsilon$, $aB\subseteq A$ and 
so $\a\b=\a$. 

If $\a\b=\a$ (equivalently, $w(\a\b)=w(\a)$), then we have shown that $\b\in E(${\em FLA}$(\Omega)) $ and clearly $\a\leq_{\mathcal{L}} \b$. The converse is clear. Thus (W2) holds.
\end{proof}

The proof of our main result depends heavily on the fact that certain factorisations can be carried through sequences.
The following two lemmas constitute the foundations of this process.

\begin{Lem} \label{Crack}
Let $\d\z=\b\V$, $\z \neq {\bf 1}$ and let $x$ be a leaf of $Z$ such that $dx \not\in B$.
Then there exist elements $\z',\x,\V' \in \FLA$ such that 
\[
Z'=Z \setminus \{x\}, w(\z')<w(\z),\,\z=\z'\x,\,\V=\V'\x,\, \d\z'=\b\V'
\]
and
\begin{enumerate}

\item if $x\neq z$ and $dx \not\in D$ then $\x=(\tilde{x}\da \cup \tilde{z}\da,\tilde{z}), \V'=(V \setminus \{b^{-1}dx\},v\tilde{z}^{-1})$ where $\tilde{x},\tilde{z} \in \Omega^*$ have no common non-empty prefix, $x=z'\tilde{x}, z=z'\tilde{z}$ (so $dx=dz'\tilde{x}=bv'\tilde{x}$),

\item if $x=z$ (then necessarily $x\neq \epsilon$)  and $dx \not\in D$ then $\z'=(Z',zx'^{-1}), \x=(\{\epsilon,x'\},x')$ and $\V'=(V\setminus \{v\},vx'^{-1})$, where $x'$ is the last letter of $x$,

\item if $x=z$ (then necessarily $x\neq \epsilon$) and $dx \in D$ then $\z'=(Z',zx'^{-1}), \x=(\{\epsilon,x'\},x')$ and $\V'=(V,vx'^{-1})$, where $x'$ is the last letter of $x$,

\item if $x\neq z$ and $dx \in D$ then $\z'=(Z',z'), \x=(\tilde{x}\da \cup \tilde{z}\da,\tilde{z}), \V'=(V,v\tilde{z}^{-1})$ where $\tilde{x},\tilde{z} \in \Omega^*$ have no common non-empty prefix.

\end{enumerate}

Furthermore, the following are true:
\begin{enumerate}
\item[(A)] in cases $(1)$ and $(2)$ we have $\left|D \cup dZ'\right|<\left|D \cup dZ\right|$ and that if $\z=\V$ then $\z'=\V'$,
\item[(B)] in cases $(1),(2)$ and $(3)$ we have $w(\b\V')=w(\d\z')<w(\d\z)=w(\b\V)$.
\end{enumerate}
\end{Lem}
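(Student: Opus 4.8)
## Proof Strategy for Lemma (Crack)

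The plan is to explicitly construct the elements $\z', \x, \V'$ by case analysis on the relationship between the leaf $x$ of $Z$ and the distinguished element $z$ (the second coordinate of $\z$), together with whether or not $dx \in D$. These four cases are precisely those enumerated in the statement, so the skeleton of the proof is handed to us; the work is to verify that each explicit formula does what is claimed. First I would reduce to working with reduced words and observe the key preliminary: since $x$ is a leaf of $Z$ with $dx \notin B$, and since $\d\z = \b\V$ means $D \cup dZ = B \cup vB'$ (writing $\b = (B,b)$, $\V = (V,v)$) — more precisely the leaf $dx$ of $D \cup dZ$ cannot come from $B$ — it must arise from $vV$, forcing $b^{-1}dx$ (or its analogue) to lie in $V$ as a leaf. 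This mirrors the leaf-tracking argument already carried out in Lemma~\ref{lem:crack}, and I would lean on Lemma~\ref{First} to guarantee that the prefix-closed sets I write down (such as $\tilde{x}\da \cup \tilde{z}\da$) actually sit inside the relevant ambient sets after multiplication.

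The core of each case is a bookkeeping verification of three equations: $\z = \z'\x$, $\V = \V'\x$, and $\d\z' = \b\V'$, together with the set identity $Z' = Z \setminus \{x\}$. For the product equations I would simply compute using $(A,a)(B,b) = (A \cup aB, ab)$ and check that the union of prefix-closed sets reassembles $Z$ (respectively $V$) and that the second coordinates multiply correctly — e.g. in case (2), $(Z', zx'^{-1})(\{\epsilon,x'\},x') = (Z' \cup zx'^{-1}\{\epsilon,x'\}, z)$, and I must confirm $zx'^{-1}\{\epsilon,x'\} = \{zx'^{-1}, z\} \subseteq Z' \cup \{x\} = Z$, which holds because $zx'^{-1} \in Z$ (as $Z$ is prefix closed and $x = z$ is a leaf) and $x = z$. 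The distinction between cases (1)/(2) and (3)/(4) is exactly whether the removed leaf $dx$ of the \emph{product} is also a leaf that vanishes from the $D$-side: when $dx \notin D$ the cancellation genuinely shrinks $D \cup dZ$, whereas when $dx \in D$ it does not, and this is why the claim $\left|D \cup dZ'\right| < \left|D \cup dZ\right|$ in part (A) is asserted only for cases (1) and (2).

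For the two summary statements I would argue as follows. Part (A): in cases (1) and (2) the element $dx$ satisfies $dx \notin D$, and since we are removing precisely $x$ from $Z$ while $d$ acts injectively on the (reduced) words, $dx$ leaves $D \cup dZ$ and nothing else is lost, giving the strict inequality; the claim $\z = \V \implies \z' = \V'$ I would verify case by case, noting it forces $\d = \b$ and then the explicit formulas for $\z'$ and $\V'$ coincide. Part (B): I would invoke the weight lemma, Lemma~\ref{Basic}. The equalities $w(\b\V') = w(\d\z')$ and $w(\d\z) = w(\b\V)$ are immediate from $\d\z' = \b\V'$ and $\d\z = \b\V$. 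For the strict inequality $w(\d\z') < w(\d\z)$, I would use $\d\z = \d\z'\x = (\d\z')\x$ together with property (W2): since in cases (1), (2), (3) the element $\x$ is \emph{not} an idempotent with $\d\z' \leq_{\mathcal{L}} \x$ (because $\x$ has a nontrivial second coordinate $\tilde z$ or $x'$), property (W2) gives $w((\d\z')\x) \neq w(\d\z')$, and then (W1) upgrades this to the strict inequality $w(\d\z') < w(\d\z)$.

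The main obstacle I anticipate is the leaf-tracking preamble, specifically confirming in each case that the designated word ($b^{-1}dx$ or $v$) really is a leaf of $V$ that disappears, and that the decomposition $x = z'\tilde{x}$, $z = z'\tilde{z}$ with $\tilde{x}, \tilde{z}$ sharing no common nonempty prefix lines up consistently with $dx = bv'\tilde{x}$ on the $\b$-side. The algebra of prefixes and inverses in $\FG$ restricted to $\Omega^*$ is where sign errors and spurious cancellations would creep in, so the bulk of the care goes into the reduced-word manipulations rather than into any conceptual difficulty — the weight and cardinality conclusions then follow mechanically from Lemmas~\ref{Basic} and~\ref{First}.
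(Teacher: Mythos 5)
Your proposal follows the same skeleton as the paper's proof — unavoidably, since the statement already dictates the four constructions — and your verification plan for the product identities, for $Z'=Z\setminus\{x\}$, and for part (A) is sound. The genuine gap is in your justification of part (B). You derive the strict inequality $w(\d\z')<w(\d\z)$ from (W2) on the grounds that ``in cases (1), (2), (3) the element $\x$ is not an idempotent \ldots because $\x$ has a nontrivial second coordinate $\tilde{z}$ or $x'$.'' In case (1) this is simply false: $\tilde{z}$ is empty precisely when $z$ is a proper prefix of the leaf $x$, and then $\x=(\tilde{x}\da\cup\tilde{z}\da,\tilde{z})$ \emph{is} an idempotent. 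Concretely, take $\d=\b=\mathbf{1}$, $Z=V=\{\epsilon,g,gh\}$, $z=v=g$ and the leaf $x=gh$ (letters $g,h\in\Omega$); then $dx=gh\notin D\cup B$, $x\neq z$, so we are in case (1) with $z'=g$, $\tilde{x}=h$, $\tilde{z}=\epsilon$, and $\x=(\{\epsilon,h\},\epsilon)$ is idempotent. Your argument therefore fails exactly in this subcase. The conclusion is still true, but for the reason the paper gives: in cases (1) and (2) the hypotheses $dx\notin D$ and $dx\notin B$ (plus $x$ being a leaf of $Z$ and prefix closure of $D$) force $dx$ to be a leaf of $D\cup dZ=B\cup bV$, and the first coordinate of $\d\z'=\b\V'$ is then $(B\cup bV)\setminus\{dx\}$, one element smaller, which gives the strict drop in weight by direct counting; in case (3) the first coordinates of $\d\z'$ and $\d\z$ coincide (since $dx\in D$) but the second coordinate $dz'$ is one letter shorter than $dz$. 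If you prefer to route this through (W2), the fact you must check is $\d\z'\x=\d\z\neq\d\z'$, not non-idempotency of $\x$, and that inequality of elements has to be read off from the explicit form of $\d\z'$ in each case.

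A secondary inaccuracy: your leaf-tracking preamble asserts that $dx$ is a leaf of $D\cup dZ$ and hence $b^{-1}dx$ is a leaf of $V$ ``in each case.'' This is correct, and needed, only in cases (1) and (2), i.e.\ when $dx\notin D$; when $dx\in D$ it can fail (take $D=\{\epsilon,g,gh,ghk\}$, $d=\epsilon$, with $x$ a leaf of $Z$ mapping onto $gh$: then $b^{-1}dx$ is a proper prefix of another word of $V$). This does no damage to cases (3) and (4) because there $\V'=(V,\cdot)$ deletes nothing from $V$, so no leaf property is required — but your write-up should make that split explicit rather than assert the leaf claim globally. Finally, note that in cases (1) and (4) you still owe the argument that $v\tilde{z}^{-1}\in V$ at all: it follows because both $dx=dz'\tilde{x}$ and $bv=dz'\tilde{z}$ lie in $bV$, and since $\tilde{x},\tilde{z}$ have no common nonempty prefix, $b$ must be a prefix of $dz'$, whence $\tilde{z}$ is a suffix of $v$; this is the one place where the ``no common prefix'' condition is actually used, and it should appear in the proof rather than be absorbed into ``bookkeeping.''
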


\begin{proof}
We investigate all $4$ cases separately:

Case (i): $dx \not\in D$ and $x \neq z$.
Let $z'$ be the greatest common prefix of $z$ and $x$, that is, there exist $\tilde{z}$ and $\tilde{x}$ such that $z=z'\tilde{z}$ and $x=z'\tilde{x}$ and $\tilde{z}$ and $\tilde{x}$ have no common non-empty prefix.
It is important to note that $\tilde{x}\neq \epsilon$, for $x$ is a leaf different from $z$.
Now let
\[
\z'=(Z \setminus \{x\},z'), \x=(\tilde{x}\da \cup \tilde{z}\da,\tilde{z}).
\]
Then it is easy to check that $\z',\x \in \fla$ and $\z=\z'\x$.
Note that since $dx \not\in B$, but $dx \in B \cup bV$, we have that $dx =dz'\tilde{x} \in bV$, and that $bv=dz=dz'\tilde{z} \in bV$ also.
Since $\tilde{z}$ and $\tilde{x}$ have no common non-empty prefix, we conclude that $b$ is a prefix of $dz'$.
As a consequence of the fact that $bv=dz'\tilde{z}$, we conclude that $\tilde{z}$ is a suffix of $v$, so $v\tilde{z}^{-1} \in V$.
Furthermore, $bv=dz'\tilde{z}$ implies that $v\tilde{z}^{-1}=b^{-1}dz'\neq b^{-1}dz'\tilde{x}=b^{-1}dx$.
Now let
\[
\V'=(V \setminus \{b^{-1}dx\},v\tilde{z}^{-1}).
\]
Note that our assumption that $dx \not\in D$ implies that $dx$ is a leaf of $B\cup bV$.
Then, since $dx \not \in B$, we have that $b^{-1}dx$ is a leaf of $V$, so $\V' \in \fla$.
It is then easy to check that $\V=\V' \x$, since the second coordinates are the same, and $b^{-1}dx=b^{-1}dz'\tilde{x}=v\tilde{z}^{-1} \tilde{x}$.
Similarly $\d\z'=\b\V'$, for the second coordinates are both equal $dz'$, and the first coordinates both equal $(B \cup bV) \setminus \{dx\}$.
Also we have that $w(\b\V') < w(\b\V)$, because $dx \in B \cup bV$.
Furthermore, if $\z=\V$ then from $\d\z=\b\V$ we conclude that $d=b$ which implies that $b^{-1}dx=x$.
Similarly $v\tilde{z}^{-1}=b^{-1}dz'=z'$, showing that $\z'=\V'$.

Case (ii): $dx \not\in D$, and $x=z$.
We have that $z \neq \epsilon$, for otherwise $\z={\bf 1}$.
So let $z=z'x'$ where $x' \in \Omega$, and let
\[
\z'= (Z \setminus \{z\},z'),\ \x=(\{\epsilon,x'\},x').
\]
We have that $\z',\x \in \fla$, and that $\z=\z'\x$.
Note that $dz \not \in B$, but it is the second coordinate of $\b\V$.
Thus, $v \neq \epsilon$, and we have that $x'$ is the last letter of $v$ and as a consequence, $dz'=bv'$, where $v'=v(x')^{-1}$.
We see that $v$ is a leaf of $V$ and similarly to the previous case it is easy to show that if we define
\[
\V'=(V \setminus \{v\},v'),
\]
then $\V' \in \fla, w(\b\V')<w(\b\V), \V=\V'\x$ and $\d\z'=\b\V'=\big( (D \cup dZ) \setminus\{dz\},dz'\big)$.
Furthermore, if $\z=\V$ then of course $z=v$ and we conclude that $\z'=\V'$, so the statements of the lemma are true.

Case (iii): $dx \in D$, and $x=z$.
This case is similar to Case (ii), the only difference being that we have to define
\[
\V'=(V,v').
\]
Since the second coordinate of $\b\V'$ is one letter shorter than $bv$,
we have that $w(\b\V')<w(\b\V)$.

Case (iv): $dx \in D$ and $x \neq z$.
Put
\[
\z'=(Z \setminus \{x\},z'),\ \x=(\tilde{x}\da \cup \tilde{z}\da,\tilde{z}) \text{ and }  \V'=(V,v\tilde{z}^{-1})
\]
where $z',\tilde{z}$ and $\tilde{x}$ are defined as in Case (i).
It is easy to check (using the same argument as in Case (i)) that $b^{-1}dx=v\tilde{z}^{-1}\tilde{x}$ is a leaf in $V$, $\z',\x,\V' \in \fla$, $w(\z')<w(\z)$ and
\[
\z=\z'\x,\ \V=\V'\x \text{ and }\d\z'=\b \V',
\]
so that again, the statements of the lemma are true.

\end{proof}

\begin{Lem} \label{Roll}
Let $\a\b=\c\d$ such that $\b=(x\da \cup b\da,b)$ for some $b,x \in \Omega^*,x\neq\epsilon$, having no common non-empty prefix.
If $ax \not\in A \cup C$ and $A=(A\cup aB) \setminus \{ax\}$, then $\d=\d'\b$ for some $\d'=(D\setminus\{ d'x\},d')$ such that  $\a=\c\d'$.
\end{Lem}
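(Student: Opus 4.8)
The plan is to read off the second coordinates first, reducing the problem to a prefix comparison in the free monoid $\Omega^*$, and then to verify the first-coordinate identities using prefix-closedness of $D$ and $A$. Write $\a=(A,a),\b=(B,b),\c=(C,c),\d=(D,d)$, so that $\a\b=(A\cup aB,ab)$ and $\c\d=(C\cup cD,cd)$; the hypothesis $\a\b=\c\d$ yields $ab=cd$ and $A\cup aB=C\cup cD$. The condition $A=(A\cup aB)\setminus\{ax\}$ is equivalent to $aB\subseteq A\cup\{ax\}$ together with $ax\notin A$, so in particular $A\cup aB=A\cup\{ax\}$.

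The crux is to locate $ax$ and to determine which of $a,c$ is a prefix of the other. Since $ax\in aB\subseteq C\cup cD$ and $ax\notin C$, necessarily $ax\in cD$, say $ax=cw$ with $w\in D$. As $a$ and $c$ are both prefixes of $ab=cd$ in $\Omega^*$, they are comparable, and I claim $c$ is a prefix of $a$. Indeed, if $a$ were a proper prefix of $c$, writing $c=ac_1$ with $c_1\neq\epsilon$ would force $c_1d=b$ and, from $ax=cw=ac_1w$, also $c_1w=x$, exhibiting $c_1$ as a common nonempty prefix of $b$ and $x$ — contradicting the hypothesis. This is the step where the no-common-prefix assumption is essential, and I expect it to be the main obstacle. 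Writing $a=cd'$, the identity $a=cd'$ is exactly the second coordinate of the desired $\a=\c\d'$; cancelling $c$ in $cd=ab=cd'b$ gives $d=d'b$, the second coordinate of $\d=\d'\b$; and cancelling $c$ in $ax=cw=cd'x$ gives $w=d'x$, so that $d'x\in D$.

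It then remains to check the first coordinates and that $\d'=(D\setminus\{d'x\},d')$ is a legitimate element of $\FLA$. Since $d'x\in D$ and $D$ is prefix closed, every prefix of $d'x$ lies in $D$, so $d'(x\da)\subseteq D$; likewise $d=d'b\in D$ forces $d'(b\da)\subseteq D$, whence $d'B\subseteq D$. As $d'x\in d'B$, this gives $(D\setminus\{d'x\})\cup d'B=D$, that is $\d=\d'\b$. To see that $D\setminus\{d'x\}$ is prefix closed I would show $d'x$ is a leaf of $D$: if $d'xy\in D$ with $y\neq\epsilon$, then $cd'xy=axy\in A\cup\{ax\}$ forces $axy\in A$, hence $ax\in A$ by prefix-closure of $A$, contradicting $ax\notin A$ (and $d'\in D$ since $d'$ is a prefix of $d'x$, with $d'\neq d'x$ as $x\neq\epsilon$). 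Finally, substituting $a=cd'$ into $A\cup\{ax\}=C\cup cD$ and using $cD=c(D\setminus\{d'x\})\,\sqcup\,\{cd'x\}$ together with $cd'x=ax\notin A$ and $ax\notin C\cup c(D\setminus\{d'x\})$, one cancels $ax$ from both sides to obtain $A=C\cup c(D\setminus\{d'x\})$, which is $\a=\c\d'$. This completes the verification.
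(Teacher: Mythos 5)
Your proof is correct and follows essentially the same route as the paper's: locating $ax$ in $cD$, using the fact that $b$ and $x$ share no nonempty prefix to conclude $c$ is a prefix of $a$, defining $d'$ by $a=cd'$, deducing $d'B\subseteq D$ from prefix-closure, and checking that $d'x$ is a leaf so that $\d'\in\FLA$. In fact your write-up fills in the verifications the paper leaves as "easy to check," so nothing is missing.
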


\begin{proof} First remark that our hypotheses guarantee that $ax$ is a leaf of $A\cup aB=C\cup cD$.

Since $ab=cd$, $c$ is a prefix of $ab$.
However, since $ax \in C \cup cD$, but $ax \not\in C$, we have that $c$ is also a prefix of $ax$.
Since $b$ and $x$ have no common non-empty prefix, this implies that $c$ is a prefix of $a$.

Let   $d' \in \Omega^*$ be such that $a=cd'$.
We have that $ax=cd'x \in cD$, so $d'x \in D$.
From $cd'b=ab=cd$ we deduce that $d'b=d\in D$.
From $d'b,d'x \in D$, the prefix closure of $D$ gives  that $d'B \subseteq D$.
Observe now that $d'x$ is a leaf of $D$ and $d'x\neq d'$, so that $\d'=(D\setminus\{ d'x\},d')\in \fla$ and clearly, $cd'x \not\in C \cup cD'$.
Moreover, it is easy to check that 
\[\a=\c\d'\mbox{ and } \d=\d'\b.\]
\end{proof}

Let $\rho$ be a finitely generated right congruence on $\fla$.
Without loss of generality we may suppose that  $\rho=\langle H \rangle$ for some finite $H \subseteq \fla\times \fla$ with $H^{-1}= H$.
Let us denote by $\mathcal{D}$ the maximum of the diameters of the components of the elements of $H$. In the following definition, we abuse terminology a little. The elements $\a,\u,\b$ and $\mathbf{v}$ play a special role, but are not distinguished from the products $\a \u$ and $\b \mathbf{v}$. We employ similar conventions in other circumstances.

\begin{Def}
Suppose that we have an $H$-sequence
\[
\a\u=\c_1\t_1,\d_1\t_1=\c_2\t_2,\ldots,\d_n\t_n=\b\V
\]
connecting $\a\u$ and $\b\V$.
Then we say that the $H$-sequence is \emph{reducible} if there exist elements $\y,\u',\t_1',\ldots,\t_n',\V'$ such that
\begin{itemize}
\item[(Red1)] $w(\a\u')<w(\a\u)$, $w(\b\V')<w(\b\V)$ or $w(\t_i')<w(\t_i)$ for some $i$;
\item[(Red2)] $\u=\u'\y, \t_1=\t_1'\y,\ldots,\t_n=\t_n'\y,\V=\V'\y$;

\item[(Red3)] $\a\u'=\c_1\t_1',\d_1\t_1'=\c_2\t_2',\ldots,\d_n\t_n'=\b\V'$.
\end{itemize}

If a sequence is not reducible, we call it \emph{irreducible}.
\end{Def}

From the above definition, a length-0
 $H$-sequence $\a\u=\b\V$ is reducible if and only if there exist elements $\y,\u',\V' \in \fla$ such that $\u=\u'\y, \V=\V'\y, \a\u'=\b\V'$ and $w(\a\u')=w(\b\V')<w(\a\u)=w(\b\V)$.

Note  that if (Red2) holds, then in view of (W2) in Lemma~\ref{Basic},  (Red1) is equivalent to saying that $\a\u'\neq \a\u$,  $\b\V' \neq \b\V$ or $\t_i'\neq \t_i$ for some $i$ - we are going to make use of this fact in the sequel.
We are going to show that every irreducible sequence has an element with diameter less than or equal to $2\m(\mathcal{D},d(\a),d(\b))$.

\begin{Lem}\label{Two}
If the sequence $\a\u=\b\V$ is irreducible then $d(\u)\leq \m(d(\a),d(\b))$.
\end{Lem}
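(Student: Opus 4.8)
The plan is to argue by contradiction, turning the assumption $d(\u) > \m(d(\a),d(\b))$ into a concrete factorisation witnessing reducibility. Recall the explicit criterion stated just before the lemma: the length-$0$ sequence $\a\u=\b\V$ is reducible precisely when there are $\y,\u',\V'\in\FLA$ with $\u=\u'\y$, $\V=\V'\y$, $\a\u'=\b\V'$ and $w(\a\u')=w(\b\V')<w(\a\u)=w(\b\V)$. So I would assume $\a\u=\b\V$ is irreducible yet $d(\u)>\m(d(\a),d(\b))$, and aim to produce exactly such $\y,\u',\V'$. The engine for this is Lemma~\ref{Crack}, applied with $\d=\a$ and $\z=\u$; the entire argument reduces to checking its hypotheses and then reading off which of its four cases occurs.

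First I would choose a word $x\in U$ of maximal length, so $l(x)=d(\u)$. Since no word of $U$ can properly extend a longest word, $x$ is automatically a leaf of $U$, and $d(\u)>0$ forces $\u\neq\mathbf{1}$. The key estimate is crude: because $l(ax)=l(a)+l(x)\geq l(x)=d(\u)$ and $d(\u)$ strictly exceeds both $d(\a)$ and $d(\b)$, the word $ax$ is strictly longer than every word of $A$ and every word of $B$. Hence $ax\notin A$ and $ax\notin B$. From $\a\u=\b\V$ we have the common set $A\cup aU=B\cup bV$, and since $ax\in aU$ lies in this set but not in $B$, we conclude $ax\in bV$. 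In particular $dx=ax\notin B$, which is exactly the hypothesis needed to apply Lemma~\ref{Crack} to the leaf $x$ of $Z=U$.

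Invoking Lemma~\ref{Crack} then yields $\u',\x,\V'\in\FLA$ with $\u=\u'\x$, $\V=\V'\x$ and $\a\u'=\b\V'$. The one remaining point is that the weight must genuinely drop at the level of the full product $\a\u$ and not merely of $\u$, i.e.\ that $w(\a\u')<w(\a\u)$; this is precisely what part~(B) of Lemma~\ref{Crack} delivers, provided we are in case (1), (2) or (3). Since we have arranged $ax\notin A=D$, we are in case (1) when $x\neq u$ and in case (2) when $x=u$, and never in case (4); thus (B) applies and gives $w(\a\u')=w(\b\V')<w(\a\u)=w(\b\V)$. Setting $\y=\x$ now exhibits $\a\u=\b\V$ as reducible, contradicting irreducibility and forcing $d(\u)\leq\m(d(\a),d(\b))$. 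I expect the only real subtlety to be this last step: the definition of reducibility demands a strict weight decrease for the full product, so the crucial work is not the length estimate itself but the observation that $ax\notin A$ keeps us out of case (4), where (B) — and hence the weight drop — would fail.
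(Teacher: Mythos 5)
Your proposal is correct and follows essentially the same route as the paper's proof: choose a longest (hence leaf) word $x\in U$, observe that $l(x)>d(\a),d(\b)$ forces $ax\notin A\cup B$, and then apply Cases (1) and (2) of Lemma~\ref{Crack} together with its weight-drop conclusion to contradict irreducibility. Your additional care in noting why case (4) is excluded (so that part (B) applies) is exactly the point the paper relies on implicitly.
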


\begin{proof}
Suppose that $d(\u)>d(\a),d(\b)$.
Then there exists a leaf $x \in U$ such that $l(u)>d(\a),d(\b)$.
As a consequence we have $ax \not\in A \cup B$, so by Cases $(1)$ and $(2)$ of Lemma \ref{Crack} there exist $\u',\V',\x \in \fla$ such that $\a\u'=\b\V', \u=\u'\x,\V=\V'\x$ and $w(\b\V')<w(\b\V)$, contradicting the irreducibility of the sequence $\a\u=\b\V$.
\end{proof}

The following Lemma shows that elements of $\FLA$ which are connected by an irreducible sequence are `lean' - the length of their second component limits their diameter.
In fact, much more is true, but this statement will suffice for our proof.
Furthermore, it is worth noting that this lemma is one (the other one is Statement (\ref{P1}) of Lemma \ref{Crack}) which is not dualisable - it fails if we swap from right congruences to left congruences.

\begin{Lem} \label{Small}
If
\begin{equation}\label{Seqq}
\a\u=\c_1\t_1,\d_1\t_1=\c_2\t_2,\ldots,\d_n\t_n=\b \V
\end{equation}
is an irreducible sequence, then $d(\a\u) \leq 2\m(l(au),d(\a),d(\b),\mathcal{D})$.
\end{Lem}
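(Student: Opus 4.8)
The plan is to argue by contradiction. Suppose the sequence \eqref{Seqq} is irreducible but $d(\a\u) > 2M$, where $M = \m(l(au),d(\a),d(\b),\mathcal{D})$. I will manufacture data $\y,\u',\t_1',\ldots,\t_n',\V'$ witnessing (Red1)--(Red3), contradicting irreducibility; this extends Lemma~\ref{Two}, which is the case $n=0$. The mechanism is that Lemma~\ref{Crack} \emph{initiates} a right factorisation at the first junction and Lemma~\ref{Roll} \emph{propagates the same factor} along the whole sequence, which is exactly what is needed to pull out one common $\y$.

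First I would locate a long leaf. Since $\a\u = (A\cup aU,au)$ we have $d(\a\u) = \m(d(\a),l(a)+d(\u))$, and as $d(\a)\leq M<2M$ the assumption forces $l(a)+d(\u)>2M$. Hence $U$ has a leaf $x$ realising $d(\u)$ with $l(ax)=l(a)+l(x)>2M$, and $ax$ is a leaf of $A\cup aU$. Let $z'$ be the longest common prefix of $x$ and the second coordinate $u$ of $\u$, and write $x=z'\tilde x$, $u=z'\tilde z$. Then $az'$ is the longest common prefix of $ax$ and the main path $au$, so $l(az')\leq l(au)\leq M$, and therefore the branch word satisfies $l(\tilde x)=l(ax)-l(az')>2M-M=M\geq\mathcal{D}$. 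In particular $l(x)>M\geq l(u)$, so $x\neq u$.

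Next I would crack once and then roll. Applying Lemma~\ref{Crack} to $\a\u=\c_1\t_1$ with the leaf $x$ of $U$ — here $ax\notin A\cup C_1$ because $l(ax)>2M\geq\mathcal{D}\geq d(\a),d(\c_1)$, and $x\neq u$, so we are in Case~(1) — yields the factor $\x=(\tilde x\da\cup\tilde z\da,\tilde z)$ and elements $\u',\t_1'$ with $\u=\u'\x$, $\t_1=\t_1'\x$ and $\a\u'=\c_1\t_1'$; by part~(B) of that lemma $w(\a\u')<w(\a\u)$. I would then propagate the single factor $\x$ rightwards. Once $\t_i=\t_i'\x$ is known, I rewrite $\d_i\t_i=\c_{i+1}\t_{i+1}$ as $(\d_i\t_i')\x=\c_{i+1}\t_{i+1}$ and apply Lemma~\ref{Roll} (with $\tilde x$ and $\tilde z$ as the two branch words of its distinguished factor), obtaining $\t_{i+1}=\t_{i+1}'\x$ together with $\d_i\t_i'=\c_{i+1}\t_{i+1}'$; the same step at the last junction $\d_n\t_n=\b\V$ gives $\V=\V'\x$ and $\d_n\t_n'=\b\V'$. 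Taking $\y=\x$, the tuple $\u',\t_1',\ldots,\t_n',\V'$ satisfies (Red2) and (Red3), while $w(\a\u')<w(\a\u)$ gives (Red1); thus the sequence is reducible, contrary to hypothesis, and $d(\a\u)\leq 2M$ follows.

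The crux — and the only place the bound $l(ax)>2M$ is really used — is verifying that Lemma~\ref{Roll} applies unchanged at every junction, so that $\x$ rolls along without deforming. The key is that the branch word $\tilde x$ is \emph{fixed} throughout with $l(\tilde x)>M$. I would maintain the invariant that the relevant leaf at the $i$-th junction is $s_i=t_i'\tilde x\in T_i$ (equivalently $d_it_i'\tilde x$ in $D_i\cup d_iT_i=C_{i+1}\cup c_{i+1}T_{i+1}$), whose length is at least $l(\tilde x)>M\geq\mathcal{D}$. This single estimate supplies all of Lemma~\ref{Roll}'s hypotheses at one stroke: the leaf is too long to lie in $D_i$, in $C_{i+1}$, or (at the last junction) in $B$, since those words have length at most $\mathcal{D}$; it lies in $d_iT_i$ but not in $d_iT_i'$ because $s_i$ has been deleted; and the required prefix-closed identity holds since deleting the single leaf $s_i$ from $T_i$ leaves $T_i'$. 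Precisely because $\tilde x$ stays long, the thread never shortens enough for $\x$ to be absorbed — the failure mode that defeats a naive leaf-by-leaf trace, where the branch word can erode by up to $\mathcal{D}$ at each step. I expect the routine but careful bookkeeping of which leaf is removed, and of the alignment $d_it_i'=c_{i+1}t_{i+1}'$ of second coordinates, to be the only fiddly part; all the substance is carried by Lemmas~\ref{Crack} and~\ref{Roll}.
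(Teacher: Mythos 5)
Your proposal is correct and takes essentially the same route as the paper's own proof: locate a leaf $ax$ of $A\cup aU$ with $l(ax)>2M$, apply Case (1) of Lemma~\ref{Crack} to the first equality to extract the factor $\x=(\tilde x\da\cup\tilde z\da,\tilde z)$ with $l(\tilde x)>M$, then propagate this single factor through every junction by repeated use of Lemma~\ref{Roll} (the bound $l(\tilde x)>M$ supplying its hypotheses at each step, including $M\geq d(\b)$ at the last one), which produces data satisfying (Red1)--(Red3) and contradicts irreducibility. Two of your inequality chains are stated loosely --- $\mathcal{D}\geq d(\a)$ and $d(\b)\leq\mathcal{D}$ need not hold --- but the bounds actually needed, $M\geq d(\a)$ and $M\geq d(\b)$, are immediate from the definition of $M$, so this does not affect the argument.
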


\begin{proof}
Let $\M=\m(l(au),d(\a),d(\b),\mathcal{D})$.
For brevity let $\c_{n+1}=\b$ and $\t_{n+1}=\V$.
Suppose that $d(\a\u)>2\M$, which clearly implies that $\u\neq {\bf 1}$.
Let $y$ be a leaf of $A \cup aU$ with $l(y)=d(\a\u)>2\M$.
Then clearly $y \not\in A$, so $y=ax$ for some leaf $x \in U$.
Notice that  since $l(a)\leq d(\a)$, we have that $l(x)>\M \geq d(\a),d(\c_1)$, so $ax \not\in A \cup C_1$.
Also, $l(ax)>l(au)$ implies that  $x \neq u$.
Then if we apply Lemma \ref{Crack} to the equality $\a\u=\c_1\t_1$ and the leaf $x \in U$, we obtain by Case $(1)$ that there exist  elements $\x,\u',\t_1' \in \fla$ such that
\[
w(\a\u')<w(\a\u),\,\u=\u'\x,\,\t_1=\t_1'\x,\a\u'=\c_1\t_1',
\]
\[
\x=(\tilde{x}\da \cup \tilde{u}\da,\tilde{u}) \text{ and } \t_1'=(T_1\setminus\{ t_1'\tilde{x}\},t_1')\]
 with $\tilde{x},\tilde{u} \in \Omega^*$ having no common non-empty prefix and $x=u'\tilde{x}$.
Note that $ax=au'\tilde{x}$, $l(ax)>2\M\geq \M+l(au)$ and $au'$ is a prefix of $au$, so we have that $l(\tilde{x})>\M$. Further,
$C_1 \cup c_1T_1'=(C_1 \cup c_1T_1) \setminus \{c_1t_1'\tilde{x}\}$. 

Note that if $n=0$ then we have already  contradicted the irreducibility of the sequence (\ref{Seqq}), so in the sequel we suppose that $n>0$.

Suppose for induction that we have constructed elements $\u',\t_1',\ldots,\t_m'\in \fla$ satisfying $\u=\u'\x$, $\t_i=\t_i'\x$ for all $1\leq i\leq m$, $T_m'=T_m \setminus \{t_m'\tilde{x}\}$ and $C_m \cup c_mT_m'=(C_m \cup c_mT_m) \setminus \{c_mt_m'\tilde{x}\}$.

Since $l(\tilde{x})>\M$, we have that $d_mt_m'\tilde{x} \not \in (D_m \cup d_mT_m') \cup C_{m+1}$, so $D_m \cup d_mT_m'=(D_m \cup d_mT_m) \setminus \{d_mt_m'\tilde{x}\}$.
We can therefore apply Lemma \ref{Roll} to the equality $\d_m\t_m' \cdot \x=\c_{m+1}\t_{m+1}$ and obtain that $\t_{m+1}=\t_{m+1}'\x$ for some $\t_{m+1}'$
with $T_{m+1}'=T_{m+1} \setminus \{t_{m+1}\tilde{x}\}$ and $\d_m\t_m'=\c_{m+1}\t_{m+1}'$, so that $C_{m+1} \cup c_{m+1}T_{m+1}'=(C_{m+1} \cup c_{m+1}T_{m+1}) \setminus \{c_{m+1}t_{m+1}\tilde{x}\}$.

Applying induction (note that $\M\geq d(\b)$ is required at the last step), there exist elements $\u',\t_1',\ldots,\t_n',\V'$ such that $\u=\u'\x,\t_1=\t_1'\x,\ldots,\t_n=\t_n'\x,\V=\V'\x, w(\a\u')<w(\a\u)$ and

\[\a\u'=\c_1\t_1',\d_1\t_1'=\c_2\t_2',\ldots,\d_n\t_n'=\b \V'.\]
This contradicts the irreducibility of the sequence (\ref{Seqq}) and so we conclude that $d(\a\u) \leq 2\M$. 
\end{proof}

\begin{Def}
We say that the pair  $(\a \u,\b \V)$ is \emph{irreducible} if $\a \u$ and $\b \V$ can be connected by an irreducible $H$-sequence.
\end{Def}

Note that in view of an earlier remark, we are a little cavalier above; more properly, we should write  $\a \cdot \u$ and $\b\cdot \V$.

\begin{Def} 
Let $\a\u=\c_1\t_1, \d_1\t_1=\c_2\t_2,\ldots,\d_n\t_n=\b\V$ be an $H$-sequence $\mathcal{S}$. We define the
{\em weight} $w$ of $\mathcal{S}$ to be 
$w(\a\u)+w(\t_1)+\ldots+w(\t_n)+w(\b\V)$.
\end{Def} 

\begin{Lem} \label{Irr}
Let 
\[\mathcal{S}: \a\u=\c_1\t_1, \d_1\t_1=\c_2\t_2,\ldots,\d_n\t_n=\b\V\] be an $H$-sequence.
Then there exist elements $\y,\u',\t_1',\ldots,\t_n',\V'$ such that
\[
\u=\u'\y, \t_1=\t_1'\y,\ldots,\t_n=\t_n'\y, \V=\V'\y,
\]
and
\[
\a\u'=\c_1\t_1',\d_1\t_1'=\c_2\t_2',\ldots,\d_n\t_n'=\b\V'
\]
is an irreducible $H$-sequence.
\end{Lem}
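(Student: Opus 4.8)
The plan is to argue by induction on the weight $w(\mathcal{S})=w(\a\u)+w(\t_1)+\cdots+w(\t_n)+w(\b\V)$ of the sequence, which is a non-negative integer by (W0) and (W1) of Lemma~\ref{Basic}. If $\mathcal{S}$ is already irreducible there is nothing to prove: one takes $\y=\mathbf{1}$, $\u'=\u$, $\t_i'=\t_i$ and $\V'=\V$. So suppose instead that $\mathcal{S}$ is reducible.

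By the definition of reducibility there exist $\y_1$ and elements $\hat\u,\hat\t_1,\ldots,\hat\t_n,\hat\V$ with $\u=\hat\u\y_1$, $\t_i=\hat\t_i\y_1$ (for $1\leq i\leq n$) and $\V=\hat\V\y_1$, such that
\[
\hat{\mathcal{S}}\colon\ \a\hat\u=\c_1\hat\t_1,\ \d_1\hat\t_1=\c_2\hat\t_2,\ \ldots,\ \d_n\hat\t_n=\b\hat\V
\]
is again an $H$-sequence (by (Red3)), in which at least one term has strictly smaller weight (by (Red1)). First I would check that $w(\hat{\mathcal{S}})<w(\mathcal{S})$. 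Since $\a\u=(\a\hat\u)\y_1$, property (W1) gives $w(\a\hat\u)\leq w(\a\u)$, and likewise $w(\hat\t_i)\leq w(\t_i)$ and $w(\b\hat\V)\leq w(\b\V)$; clause (Red1) forces at least one of these inequalities to be strict, so indeed $w(\hat{\mathcal{S}})<w(\mathcal{S})$.

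Next I would apply the induction hypothesis to $\hat{\mathcal{S}}$, which shares the coefficients $\c_i,\d_i$ with $\mathcal{S}$. This produces an element $\y_2$ and elements $\u',\t_1',\ldots,\t_n',\V'$ with $\hat\u=\u'\y_2$, $\hat\t_i=\t_i'\y_2$ and $\hat\V=\V'\y_2$, such that
\[
\mathcal{S}'\colon\ \a\u'=\c_1\t_1',\ \d_1\t_1'=\c_2\t_2',\ \ldots,\ \d_n\t_n'=\b\V'
\]
is an irreducible $H$-sequence. Setting $\y=\y_2\y_1$ and using associativity in $\FLA$, we obtain $\u=\hat\u\y_1=\u'\y_2\y_1=\u'\y$, and similarly $\t_i=\t_i'\y$ and $\V=\V'\y$. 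Thus $\mathcal{S}'$, together with the witnesses $\y,\u',\t_1',\ldots,\t_n',\V'$, is exactly what is required.

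The only step needing genuine care is the first one: verifying that a single reduction strictly lowers the total weight of the sequence. This is precisely where (W1) is used — right-multiplication by $\y_1$ cannot decrease weight, so deleting $\y_1$ cannot increase it — in combination with the strict-decrease clause (Red1). Everything else is bookkeeping: the two factorisations compose because $\FLA$ is a monoid, the induction is well-founded because $w(\mathcal{S})$ is a non-negative integer, and one need only observe that the order in the product $\y=\y_2\y_1$ matters, since the $\y_i$ are being stripped from the right in successive stages.
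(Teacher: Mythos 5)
Your proof is correct and follows essentially the same route as the paper's: induction on the weight of the sequence, using (W1) together with (Red1) to see that a single reduction strictly lowers $w(\mathcal{S})$, then composing the two right factors into $\y=\y_2\y_1$. The only cosmetic difference is that the paper anchors the induction by identifying the minimal weight $w(\a)+w(\b)$ and showing such sequences are irreducible, whereas you fold the base case into the ``if irreducible, take $\y=\mathbf{1}$'' branch, which is equally valid.
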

\begin{proof} We use induction on the weight of 
$\mathcal{S}$. First note that by Lemma~\ref{Basic}, $w(\mathcal{S})\geq w(\a)+w(\b)$.

If $w(\mathcal{S})=w(\a)+w(\b)$, then again by Lemma~\ref{Basic} we have that
$\a\u=\a$, $\b\V=\b$ and $w(\t_1)=\ldots=w(\t_n)=0$, so that $\t_1=\ldots=\t_n={\bf 1}$ and our $H$-sequence is irreducible in view of (Red1).

Suppose now that $w(\mathcal{S})>w(\a)+w(\b)$ and the $H$-sequence
\[
\a\u=\c_1\t_1, \d_1\t_1=\c_2\t_2,\ldots,\d_n\t_n=\b\V
\]
is reducible.
Then there exist elements $\tilde{\y},\tilde{\u},\tilde{\t}_1,\ldots,\tilde{\t}_n,\tilde{\V}$ satisfying conditions (Red1)-(Red3), that is, $\u=\tilde{\u}\tilde{\y},\t_i=\tilde{\t}_i\tilde{\y}$ for all $1 \leq i \leq n$, $\V=\tilde{\V}\tilde{\y}$,
\begin{equation}\label{Nyaff}
\a\tilde{\u}=\c_1\tilde{\t}_1,\d_1\tilde{\t}_1=\c_2\tilde{\t}_2,\ldots,\d_n\tilde{\t}_n=\b\tilde{\V}
\end{equation}
and
\[
w(\a\tilde{\u})+w(\tilde{\t}_1)+\ldots+w(\tilde{\t}_n)+w(\b\tilde{\V})<w(\a\u)+w(\t_1)+\ldots+w(\t_n)+w(\b\V).
\]
This inequality shows that we can apply the inductive hypothesis to the $H$-sequence (\ref{Nyaff}).
Thus there exists an irreducible sequence
\[
\a\u'=\c_1\t_1',\ldots,\d_n\t_n'=\b\V'
\]
and an element $\y'$ such that $\tilde{\u}=\u'\y',\tilde{\t}_i=\t_i'\y'$ and $\tilde{\V}=\V'\y'$.
In this case let $\y=\y'\tilde{\y}$, and the lemma is proved.

\end{proof}

This lemma shows that if $(\a\u,\b\V)$ is not irreducible, then it is a `direct consequence' of an irreducible pair $(\a \u',\b\V')$.
The following lemma will be used to `dismantle' irreducible sequences, and to show that they always contain a `small' element.

\begin{Lem}\label{New}
Let 
\begin{equation}\label{BasicS}
\a\u=\c_1\t_1,\ldots,\d_{n-1}\t_{n-1}=\c_n\t_n, \d_n\t_n=\b\V
\end{equation}
be an irreducible sequence.
Then there exist $\z, \u',\t_1',\ldots, \t_n' \in \fla$ such that
\begin{equation}\label{P1}
d(\z)\leq \m(d(\a),d(\b),\mathcal{D}),
\end{equation}
\begin{equation}\label{P2}
\u=\u'\z, \t_1=\t_1'\z, \ldots, \t_n=\t_n'\z,
\end{equation}
and such that the sequence
\begin{equation}\label{P3}
\a\u'=\c_1\t_1', \ldots, \d_{n-1}\t_{n-1}'=\c_n\t_n'
\end{equation}
is irreducible.
Furthermore, if $\z \neq {\bf 1}$, then
\begin{equation}\label{Min}
\text{min}(d(\a\u),d(\b\V)) \leq 2\m(d(\a),d(\b),\mathcal{D}).
\end{equation}
\end{Lem}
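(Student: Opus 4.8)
The plan is to separate the qualitative claims \eqref{P2}--\eqref{P3} from the quantitative ones \eqref{P1} and \eqref{Min}: the former come almost for free from Lemma~\ref{Irr}, while the latter are extracted from the \emph{discarded} last equation $\d_n\t_n=\b\V$ together with the irreducibility of the full sequence \eqref{BasicS}. First I would apply Lemma~\ref{Irr} to the truncated $H$-sequence
\[
\a\u=\c_1\t_1,\ldots,\d_{n-1}\t_{n-1}=\c_n\t_n
\]
obtained by deleting the final equation. This returns $\z,\u',\t_1',\ldots,\t_n'$ with $\u=\u'\z,\ \t_1=\t_1'\z,\ldots,\t_n=\t_n'\z$ such that $\a\u'=\c_1\t_1',\ldots,\d_{n-1}\t_{n-1}'=\c_n\t_n'$ is irreducible, which is precisely \eqref{P2} and \eqref{P3}. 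I would take $\z$ of least weight among all such common right factors, so that the peeling is genuinely forced: when $\z\neq\mathbf 1$, no proper right factor of $\z$ already makes the truncated sequence irreducible.

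The engine for the diameter bounds is the cracking-and-rolling mechanism of Lemma~\ref{Small}: crack a long leaf off an endpoint by Lemma~\ref{Crack} (Case (1)), obtaining a factor $\x=(\tilde x\da\cup\tilde z\da,\tilde z)$, and transport it along the whole chain by repeated use of Lemma~\ref{Roll}, whose hypotheses hold precisely because the cracked piece is longer than each of $d(\a),d(\b)$ and $\mathcal{D}$, and hence ``sticks out'' past every tree coming from $\a,\b$ or $H$. For \eqref{P1}, set $M=\m(d(\a),d(\b),\mathcal{D})$ and suppose $d(\z)>M$. A leaf $\zeta$ of $Z$ with $l(\zeta)>M$ gives, via $\b\V=(\d_n\t_n')\z$, a leaf of $\b\V$ lying strictly below $d_nt_n'$ and longer than $M$; cracking it off and rolling back along \eqref{BasicS} produces a strict weight drop, contradicting irreducibility of \eqref{BasicS}. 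Hence $d(\z)\le M$.

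For \eqref{Min} assume $\z\neq\mathbf 1$. Since $\b\V=(\d_n\t_n')\z$, the factor $\z$ is a right factor of $\b\V$, and the decisive question is whether it splits off the $\V$-side, i.e.\ whether there is $\V'$ with $\V=\V'\z$ and $\d_n\t_n'=\b\V'$. If it does, then $\u=\u'\z,\ \t_i=\t_i'\z,\ \V=\V'\z$ together reduce the full sequence \eqref{BasicS}; by minimality of $\z$ this reduction is nontrivial, so by (W2) some weight strictly drops, contradicting irreducibility of \eqref{BasicS}. Therefore $\z$ is \emph{not} a right factor of $\V$. Comparing second coordinates in $bv=d_nt_n'z$, this failure occurs in one of two ways: either $d_nt_n'$ is a proper prefix of $b$, whence $l(d_nt_n')<l(b)\le M$ and every leaf of $\b\V=(B\cup (d_nt_n')Z,\,d_nt_n'z)$ has length at most $\max(d(\b),\,l(d_nt_n')+d(\z))\le 2M$, giving $d(\b\V)\le 2M$; or the obstruction to splitting is a short branch already present in $B$, in which case a symmetric crack-and-roll argument from the retained left endpoint bounds $d(\a\u)$ instead. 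Either way $\min(d(\a\u),d(\b\V))\le 2M$.

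The main obstacle is the last paragraph: making the dichotomy on how $\z$ sits inside $\b\V$ fully rigorous, and in particular showing that the ``short branch'' case really forces the \emph{other} endpoint to be small (this is what produces the $\min$ in \eqref{Min} rather than a bound on $\b\V$ alone). Closely tied to this is avoiding circularity between \eqref{P1} and \eqref{Min}, since the bound $d(\b\V)\le 2M$ is used with $d(\z)\le M$; the clean way is to prove \eqref{P1} first by the argument above and only then feed it into \eqref{Min}. As in Lemma~\ref{Small}, the genuinely delicate bookkeeping is verifying the hypotheses of Lemma~\ref{Roll} at each transport step and tracking exactly which of the four cases of Lemma~\ref{Crack} is invoked.
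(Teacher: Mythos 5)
Your reduction of \eqref{P2} and \eqref{P3} to Lemma \ref{Irr} applied to the truncated sequence, and your plan to extract the diameter bounds from the discarded equation $\d_n\t_n=\b\V$ together with irreducibility of \eqref{BasicS}, follow the paper's strategy; the gaps are in both quantitative steps, and they trace to the same missing case. For \eqref{P1}: when you crack a long leaf $x$ of $Z$ against $\d_n\t_n'\cdot\z=\b\cdot\V$, a strict weight drop (statement (B) of Lemma \ref{Crack}) is guaranteed only in cases (1)--(3), i.e.\ when $x=z$ or $d_nt_n'x\notin D_n\cup d_nT_n'$. Nothing rules out case (4): $d(\t_n')$ is not controlled at this stage (for instance $T_n'$ could contain all of $t_n'Z$), so a leaf of $Z$ longer than $\m(d(\a),d(\b),\mathcal{D})$ may well satisfy $t_n'x\in T_n'$, and there the crack need not decrease any weight --- if moreover $z$ is a prefix of $x$, the factor $\x$ is idempotent and all products are unchanged --- so irreducibility of \eqref{BasicS} yields no contradiction and your argument has nowhere to go. The paper closes exactly this case with two ingredients absent from your proposal: minimality of $w(\z)$ \emph{with $\u',\t_1',\ldots,\t_n'$ held fixed} (your minimisation over all decompositions does not serve here), which forces $\u\neq\u'\z'$, followed by a comparison of first coordinates yielding $au'x\in A$. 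The outcome is thus not a contradiction but the containment $Z\subseteq(au')^{-1}A\cup(d_nt_n')^{-1}B$, which is what gives \eqref{P1}. Note also that no rolling occurs (or is safe) in this lemma: the paper simply multiplies the already-factored sequence \eqref{P3} by $\z'$ and appends the cracked last equation, whereas your transport via Lemma \ref{Roll} would need the cracked piece to overshoot every tree along the sequence, which, unlike in Lemma \ref{Small}, is not guaranteed here.

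Your argument for \eqref{Min} fails independently. The first coordinate of $\b\V=\d_n\t_n'\cdot\z$ is $D_n\cup d_nT_n'\cup d_nt_n'Z$, not $B\cup d_nt_n'Z$, so in your first case the quantity $\max\bigl(d(\b),\,l(d_nt_n')+d(\z)\bigr)$ does not bound $d(\b\V)$: leaves of $\b\V$ may lie in $d_nT_n'$, whose diameter you have not bounded. Your second case (``a short branch already present in $B$'') is not an argument. The paper obtains \eqref{Min} as an immediate corollary of the containment established for \eqref{P1}: since $z\in Z$, either $au=au'z\in A$, giving $l(au)\le d(\a)$, or $bv=d_nt_n'z\in B$, giving $l(bv)\le d(\b)$; Lemma \ref{Small}, applied to the irreducible sequence \eqref{BasicS} read from the corresponding end (permissible since $H=H^{-1}$ and irreducibility is symmetric under reversal), then bounds $d(\a\u)$, respectively $d(\b\V)$, by $2\m(d(\a),d(\b),\mathcal{D})$. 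The minimum in \eqref{Min} appears precisely because one cannot control which of the two alternatives occurs.
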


\begin{proof}
If the sequence
\begin{equation}\label{Chopped}
\a\u=\c_1\t_1, \ldots, \d_{n-1}\t_{n-1}=\c_n\t_n
\end{equation}
is irreducible then $\z={\bf 1}, \u=\u', \t_i'=\t_i$ for $1\leq i\leq n$ satisfy the requirements of the lemma.
Let us therefore suppose that the sequence (\ref{Chopped}) is reducible.
Then by Lemma \ref{Irr} there exist $\z\neq {\bf 1},\u',\t_1',\ldots, \t_n' \in \fla$ such that (\ref{P2}) and (\ref{P3}) are satisfied.

Let us fix $\u',\t_1',\ldots,\t_n'$, and choose a $\z$ such that its weight is minimal amongst those satisfying the equalities (\ref{P2}).
We claim that this particular $\z$ satisfies (\ref{P1}) by first showing that $Z \subseteq (au')^{-1}A \cup (d_nt_n')^{-1}B$ where
\[
g^{-1} X=\{y \in \Omega^*: gy \in X\}.
\]
Note that if $X$ is prefix closed then so is $g^{-1}X$.
Therefore it is enough to show that the leaves  of $Z$ are contained in $(au')^{-1}A \cup (d_nt_n')^{-1}B$.
Let $x$ be a leaf of $Z$, and suppose that $d_nt_n'x \not\in B$.

Then by applying Lemma \ref{Crack} to the equation $\d_n\t_n' \cdot \z=\b\cdot \V$, there exist elements $\z',\V',\x \in \fla$ such that $\z=\z'\x, w(\z')<w(\z), \V=\V'\x$ and $\d_n\t_n'\z'=\b\V'$.
If we multiply the sequence (\ref{P3}) by $\z'$ and combine it with the equality $\d_n\t_n'\z'=\b\V'$ we obtain the $H$-sequence
\begin{equation}\label{Multi}
\a\u'\z'=\c_1\t_1'\z', \ldots, \d_{n-1}'\t_{n-1}'\z'=\c_n\t_n'\z', \d_n\t_n'\z'=\b\V'.
\end{equation}
Note that if we multiply the sequence (\ref{Multi}) by the element $\x$ we obtain the sequence (\ref{BasicS}). 

If $x=z$ or $d_nt_n'x \not \in D_n \cup d_nT_n'$, then we also have that $w(\b\V')<w(\b\V)$, contradicting the irreducibility of sequence (\ref{BasicS}). 

We therefore conclude that $x\neq z$ and $d_nt_n'x \in D_n \cup d_nT_n'$. 
Since sequence (\ref{BasicS}) is irreducible, this can only happen if $\a\u'\z'=\a\u, \t_1'\z'=\t_1,\ldots \t_n'\z'=\t_n$ and $\b\V'=\b\V$.
Note that $w(\z')<w(\z)$, so by the minimality of $w(\z)$, one of the equations of (\ref{P2}) must fail for $\z'$, and since we have just shown that $\t_i=\t_i'\z'$ for all $i$, we have that $\u\neq \u'\z'$.
Notice that $\a\u'\z'=\a\u$ implies that the second coordinates of $\u$ and $\u'\z'$ are the same and so the first coordinates of $\u$ and $\u'\z'$ are different.
Since $\z'=(Z \setminus \{x\},z')$, the first coordinate of $\u'\z'$ can differ from the first coordinate of $\u=\u'\z$ only in the element $u'x$.
That is, $u'x \not\in U' \cup u'Z'$.
However, $\a\u=\a\u'\z'$ and $au'x \in A \cup aU$, so $au'x \in A \cup a(U' \cup u'Z')$, that is, $au'x \in A$.

So far we have shown that for every leaf $x$ of $Z$, if $d_nt_n'x \not \in B$, then $au'x \in A$.
This shows that every leaf $x$ of $Z$ is contained in the prefix closed set $(au')^{-1}A \cup (d_nt_n')^{-1}B$, so $Z \subseteq (au')^{-1}A \cup (d_nt_n')^{-1}B$.
Since $d(g^{-1}X)\leq d(X)$ for every $g \in \Omega^*$ and finite $X \subseteq \Omega^*$, we conclude that $d(\z)\leq \text{max}(d(\a),d(\b))\leq \m(d(\a),d(\b),\mathcal{D})$.

We have observed that $\z\neq \mathbf{1}$.
Either $au'z \in A$ or $d_nt_n'z \in B$.
If $d_nt_n'z \in B$ then $l(bv)=l(d_nt_n)=l(d_nt_n'z)\leq d(\b)$, whilst if $au'z \in A$, then $l(au)=l(au'z) \leq d(\a)$.
Lemma \ref{Small} implies in the first case that $d(\b\V) \leq 2\m(d(\a),d(\b),\mathcal{D})$, whilst in the second case $d(\a\u) \leq 2\m(d(\a),d(\b),\mathcal{D})$.

\end{proof}

As a consequence of this lemma we can show that every irreducible sequence contains a `small' element.

\begin{Lem}\label{Sm}
Let
\begin{equation}\label{SeqSm}
\a\u=\c_1\t_1,\ldots,\d_n\t_n=\b\V
\end{equation}
be an irreducible $H$-sequence.
Then there exists an element in the sequence having diameter less than or equal to $2\m(d(\a),d(\b),\mathcal{D})$.
\end{Lem}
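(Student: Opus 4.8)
The plan is to induct on the length $n$ of the irreducible sequence~(\ref{SeqSm}), using Lemma~\ref{New} as the engine of the induction and Lemma~\ref{Two} for the base case. It is convenient to keep explicit track of the two distinguished factors at the ends, and so I would prove the (formally identical) statement that \emph{any} irreducible $H$-sequence $\a\u=\c_1\t_1,\ldots,\d_n\t_n=\b\V$ contains an element of diameter at most $2\m(d(\a),d(\b),\mathcal{D})$. For the base case $n=0$ the sequence is just $\a\u=\b\V$, and Lemma~\ref{Two} gives $d(\u)\leq \m(d(\a),d(\b))$. Since every word of $A\cup aU$ has length at most $\max\!\big(d(\a),\,l(a)+d(\u)\big)\leq d(\a)+\m(d(\a),d(\b))\leq 2\m(d(\a),d(\b))$, the element $\a\u$ itself already has diameter at most $2\m(d(\a),d(\b),\mathcal{D})$.

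For the inductive step $n\geq 1$ I would apply Lemma~\ref{New} to~(\ref{SeqSm}) to obtain an element $\z$ with $d(\z)\leq\m(d(\a),d(\b),\mathcal{D})$, factorisations $\u=\u'\z$ and $\t_i=\t_i'\z$, and an irreducible chopped sequence $\a\u'=\c_1\t_1',\ldots,\d_{n-1}\t_{n-1}'=\c_n\t_n'$. The argument then splits along the dichotomy supplied by that lemma. If $\z\neq\mathbf{1}$, the final clause of Lemma~\ref{New} yields $\min(d(\a\u),d(\b\V))\leq 2\m(d(\a),d(\b),\mathcal{D})$ directly, so one of the two end elements of~(\ref{SeqSm}) is already small and there is nothing more to do.

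The substantive case is $\z=\mathbf{1}$. Here $\u=\u'$ and $\t_i=\t_i'$ for every $i$, so the chopped sequence is literally~(\ref{SeqSm}) with its last link deleted, i.e.\ an irreducible $H$-sequence of length $n-1$ connecting $\a\u$ to $\c_n\t_n$. I would apply the induction hypothesis to this shorter sequence, now taking $\c_n$ as the distinguished right-hand factor (so $\c_n\t_n$ plays the role of $\b\V$). Every element of the chopped sequence is an element of~(\ref{SeqSm}), so any small element it produces is a small element of the original sequence.

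The one point I expect to require care is keeping the diameter bound anchored across this reduction: the induction would naively hand back a bound of $2\m(d(\a),d(\c_n),\mathcal{D})$, and I must check this does not exceed $2\m(d(\a),d(\b),\mathcal{D})$. This is exactly where the hypothesis $(\c_n,\d_n)\in H$ (recall $H=H^{-1}$) is used: it forces $d(\c_n)\leq\mathcal{D}$, whence $\m(d(\a),d(\c_n),\mathcal{D})=\m(d(\a),\mathcal{D})\leq\m(d(\a),d(\b),\mathcal{D})$ and the bound is preserved. Thus the genuine work has already been absorbed into Lemmas~\ref{New} and~\ref{Two}, and the only remaining difficulty is organisational---arranging the induction so that repeatedly stripping off a trivial $\z$ shortens the sequence while the threshold stays pinned at $2\m(d(\a),d(\b),\mathcal{D})$.
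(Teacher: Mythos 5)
Your proposal is correct and is essentially the paper's own argument: the paper likewise combines Lemma~\ref{New} (with the case $\z\neq\mathbf{1}$ invoking (\ref{Min})) and Lemma~\ref{Two}, merely organising the reduction as repeated truncation of the last link ``until $\z\neq\mathbf{1}$'' under the assumption $d(\a\u)>2\m(d(\a),d(\b),\mathcal{D})$, rather than as an explicit induction on $n$. Your anchoring step, that $(\c_n,\d_n)\in H$ forces $d(\c_n)\leq\mathcal{D}$ so the bound does not drift, is exactly the fact the paper uses implicitly when it applies Lemma~\ref{New} to the truncated sequence ending at $\c_i\t_i$.
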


\begin{proof}
Let $\mathcal{D}'=\m(d(\a),d(\b),\mathcal{D})$.
If $d(\a\u) \leq 2 \mathcal{D}'$, then the statement is true, so let us suppose that $d(\a\u) >  2\mathcal{D}'$.

Apply Lemma \ref{New} to the sequence (\ref{SeqSm}).
Note that $\z\neq 1$ if and only if the shortened sequence
\[
\a\u=\c_1\t_1,\ldots,\d_{m-1}\t_{m-1}=\c_m\t_m
\]
is also irreducible.
In this case we can apply Lemma \ref{New} to this shortened sequence, and repeat the procedure until $\z \neq \bf{1}$.
Note that such a $\z$ exists, for otherwise we would have that the sequence $\a\u=\c_1\t_1$ is irreducible, which by Lemma \ref{Two} contradicts our assumption that $d(\a\u)>2\mathcal{D}'$.
That is, there exists $2 \leq i\leq n+1$ such that
\[
\a\u=\c_1\t_1,\ldots, \d_{j-1}\t_{j-1}=\c_j\t_j
\]
is irreducible for all $i\leq j\leq n+1$ (where we denote $\b$ by $\c_{n+1}$ and $\V$ by $\t_{n+1}$), but
\[
\a\u=\c_1\t_1,\ldots, \d_{i-2}\t_{i-2}=\c_{i-1}\t_{i-1}
\]
is reducible.
In this case if we apply Lemma \ref{New} to the first sequence with $j=i$, then the acquired element $\z$ will be different from ${\bf 1}$, and as a consequence the lemma implies that $\text{min}(d(\a\u),d(\c_i\t_i)) \leq 2 \mathcal{D}'$.
\end{proof}

Now let
\begin{equation}\label{Seq1}
\a\u=\c_1\t_1,\ldots,\d_{n-1}\t_{n-1}=\c_n\t_n,\d_n\t_n=\b\V
\end{equation}
be an irreducible $H$-sequence with $n\geq 1$ and let $\mathcal{D}'=\m(d(\a),d(\b),\mathcal{D})$.
Then by Lemma \ref{New} there exist $\z,\u',\t_1',\ldots,\t_n'\in \fla$, $d(\z) \leq \mathcal{D}'$ such that $\u=\u'\z$ and $\t_i=\t_i'\z$ for every $1\leq i\leq n$, and such that the sequence
\[
\a\u'=\c_1\t_1',\ldots,\d_{n-1}\t_{n-1}'=\c_n\t_n'
\]
is irreducible.
Now let us apply Lemma \ref{New} to this sequence.
Thus, there exist elements $\y^{(n)},\u^{(n)},\t_1^{(n)},\ldots,\t_{n-1}^{(n)} \in \fla$, $d(\y^{(n)})\leq \mathcal{D}'$ satisfying $\u'=\u^{(n)}\y^{(n)}$, $\t_i'=\t_i^{(n)}\y^{(n)}$ for every $1\leq i\leq n-1$ and such that the $H$-sequence
\begin{equation}
\a\u^{(n)}=\c_1\t_1^{(n)},\ldots, \d_{n-2}\t_{n-2}^{(n)}=\c_{n-1}\t_{n-1}^{(n)}
\end{equation}
is irreducible.

Note that $\u=\u^{(n)}\y^{(n)}\z$ and $\t_i=\t_i^{(n)}\y^{(n)}\z$ for every $1\leq i\leq n-1$.
Inductively, for every $2\leq k \leq n$ we can define the elements $\u^{(k)},\y^{(k)}$ and $\t_i^{(k)}$ where $1 \leq i\leq k-1$ satisfying $\u^{(k+1)}=\u^{(k)}\y^{(k)}$ and $\t_i^{(k+1)}=\t_i^{(k)}\y^{(k)}$ for every $1\leq i\leq k-1$ such that the $H$-sequence
\begin{equation}\label{KSeq}
\a\u^{(k)}=\c_1\t_1^{(k)},\ldots, \d_{k-2}\t_{k-2}^{(k)}=\c_{k-1}\t_{k-1}^{(k)}
\end{equation}
is irreducible, and $d(\y^{(k)})\leq \mathcal{D}'$.

The last step is to define $\y^{(1)}$: at this point we have that the $H$-sequence
\begin{equation}
\a\u^{(2)}=\c_1\t_1^{(2)}
\end{equation}
is irreducible.
By Lemma \ref{Two}, we have that $d(\u^{(2)})\leq \m(d(\a),d(\c_1)) \leq \mathcal{D}'$.
So if we define $\y^{(1)}=\u^{(2)}$ then $d(\y^{(1)})\leq \mathcal{D}'$ .
For later reference, we summarise the properties of the elements $\y^{(i)}_j$ in the following lemma.

\begin{Lem}\label{lem:thedecomp}
If
\[
\a\u=\c_1\t_1,\ldots,\d_{n-1}\t_{n-1}=\c_n\t_n,\d_n\t_n=\b\V
\]
is an irreducible $H$-sequence with $n\geq 1$, then there exist elements $\z,\u^{(i)},\y^{(i)}$ and $\t^{(i)}_j$ where $1\leq j<i\leq n$ such that
\begin{itemize}
\item[\rm (Y1)] $\u=\y^{(1)} \ldots \y^{(n)}\z$, $\u^{(i)}=\y^{(1)}\ldots \y^{(i-1)}$ for every $2\leq i\leq n$,
\item[\rm (Y2)] $\t^{(j)}_i=\t^{(j-1)}_i \y^{(j-1)}$,
\item[\rm (Y3)] the $H$-sequence
\[
\a\u^{(j)}=\c_1\t^{(j)}_1,\ldots,\d_{j-2}\t^{(j)}_{j-2}=\c_{j-1}\t^{(j)}_{j-1}
\]
is irreducible for every $2\leq j\leq n$,
\item[(Y4)]  $d(\z), d(\y^{(i)})\leq \m(d(\a),d(\b),\mathcal{D})$ for all $1 \leq i \leq n$.
\end{itemize}
\end{Lem}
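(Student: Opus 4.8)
The plan is to recognise this statement as a bookkeeping summary of a finite iteration of Lemma~\ref{New}, and to prove it by peeling the links off the irreducible sequence one at a time, from the right-hand end. Throughout write $\mathcal{D}'=\m(d(\a),d(\b),\mathcal{D})$. The single structural input is Lemma~\ref{New}: applied to an irreducible sequence connecting $\a\u$ to $\b\V$, it produces a right factor $\z$ of bounded diameter with $\u=\u'\z$ and $\t_i=\t_i'\z$, together with a strictly shorter irreducible sequence. This is exactly the operation that removes the last equation $\d_n\t_n=\b\V$ and records the factor thrown off to the right, so iterating it should manufacture the elements $\y^{(i)}$ and the nested irreducible sequences of (Y3) automatically.

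Concretely, I would first apply Lemma~\ref{New} to the given length-$n$ sequence to obtain $\z$ with $d(\z)\leq\mathcal{D}'$, factorisations $\u=\u'\z$ and $\t_i=\t_i'\z$, and a strictly shorter irreducible sequence $\a\u'=\c_1\t_1',\ldots,\d_{n-1}\t_{n-1}'=\c_n\t_n'$. I would then keep applying Lemma~\ref{New} to the current shortest irreducible sequence: each application throws off one further factor $\y^{(k)}$, with the $\u$- and $\t$-components accumulating it on the right, and returns a still shorter irreducible sequence, which is exactly a sequence of the form (Y3). The successive factorisations telescope to give (Y1), their reindexing gives (Y2), and the irreducibility of each intermediate sequence is precisely what Lemma~\ref{New} hands back, so (Y3) holds throughout. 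The iteration stops at the single-equation sequence $\a\u^{(2)}=\c_1\t_1^{(2)}$, where I would invoke Lemma~\ref{Two} to get $d(\u^{(2)})\leq\m(d(\a),d(\c_1))\leq\mathcal{D}'$ and set $\y^{(1)}=\u^{(2)}$, completing (Y1) and the final instance of (Y4).

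The one point that needs genuine care, rather than pure bookkeeping, is the uniform diameter bound (Y4). Each application of Lemma~\ref{New} beyond the first is made to a \emph{shortened} sequence whose right-hand endpoint is no longer $\b\V$ but some $\c_j\t_j^{(\cdots)}$, so the bound it returns is $\m(d(\a),d(\c_j),\mathcal{D})$ rather than $\mathcal{D}'$. The resolution is that each $\c_j$ is a component of an element of $H\cup H^{-1}$, hence $d(\c_j)\leq\mathcal{D}$, so every such bound collapses to $\mathcal{D}'$; the same observation, now with $\c_1$, handles the final step via Lemma~\ref{Two}. Beyond this, the main obstacle is simply the indexing: one must keep the descending family of sequences labelled consistently so that the factors $\y^{(1)},\ldots,\y^{(n)},\z$ land in the stated left-to-right order in (Y1) and so that the shift in (Y2) is correctly aligned, but no new idea is required once the iteration is set up.
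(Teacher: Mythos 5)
Your proposal is correct and is essentially the paper's own argument: the lemma is proved there by exactly this right-to-left iteration of Lemma~\ref{New} (first peeling off $\z$, then $\y^{(n)},\ldots,\y^{(2)}$ from the successively shortened irreducible sequences), terminating with Lemma~\ref{Two} applied to the single-equation sequence and setting $\y^{(1)}=\u^{(2)}$. Your explicit check that the bounds $\m(d(\a),d(\c_j),\mathcal{D})$ returned for the shortened sequences collapse to $\m(d(\a),d(\b),\mathcal{D})$ because each $\c_j$ is a component of an element of $H$ (so $d(\c_j)\leq\mathcal{D}$) is left implicit in the paper, but it is the same argument.
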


Notice  that for every $1\leq i\leq n$ we have that either $\a\y^{(1)}\ldots \y^{(i)}\neq \a\y^{(1)} \ldots \y^{(i+1)}$ or $\y^{(i+1)}$ is an idempotent (here we assume that $\y^{(n+1)}=\z$).

\section{The free left ample monoid and right coherency}\label{sec:flacoherent}

We are now in a position to show that $\FLA$ is right coherent. Assume first that $\Omega$ is finite. Continuing from 
Lemma~\ref{lem:thedecomp}, let $\mathcal{W}$ be the maximal weight of elements of $\fla$ having diameter less than or equal to $\mathcal{D}'$.
Since $\Omega$ is finite, so $\mathcal{W}$ exists.
If we multiply any number of idempotents having diameter less than or equal to $\mathcal{D}'$, then the diameter of the resulting element will be less than or equal to $\mathcal{D}'$, so the weight of the product will be less than or equal to $\mathcal{W}$.

Now let us `merge' the consecutive idempotents of the sequence $\y^{(1)},\ldots, \y^{(n)},\z$ with the succeeding non-idempotent elements. That is, if $\y^{(1)}$ is not idempotent, then let $\y_1=\y^{(1)}$. Otherwise, let $\y^{(1)}\ldots \y^{(i)}$ be the first maximal idempotent subsequence, and let $\y_1= \y^{(1)}\ldots \y^{(i)} \y^{(i+1)}$, and so on: if the next element is not idempotent, it will be $\y_2$, otherwise $\y_2$ will be the product of the following maximal subsequence of idempotents multiplied by the next non-idempotent. In case $\z$ is idempotent, the last element of the sequence $\y_1,\ldots,\y_m$ will be idempotent, but all the others are non-idempotent.
Notice that for every $1\leq i\leq m$, $\y_i$ is a product of idempotents followed by a non-idempotent except (possibly) in the case $i=m$.
All factors of $\y_i$ have diameter less than or equal to $\mathcal{D}'$, so the product of their diameters also has this property.
This implies that $w(\y_i)\leq \mathcal{W}$.
The properties of the sequence $\y_1,\ldots,\y_m$ are summarised in the following lemma.

\begin{Lem} \label{Why}
If
\[
\a\u=\c_1\t_1,\ldots,\d_n\t_n=\b\V
\]
is an irreducible $H$-sequence, then there exist elements $\y_1,\ldots,\y_m$ such that
\begin{itemize}
\item[\rm (C1)] $\u=\y_1\y_2\ldots \y_m$,
\item[\rm (C2)] $w(\y_i)\leq \mathcal{W}$ for every $1\leq i\leq m$, where $\mathcal{W}$ denotes the maximal weight of elements of $\fla$ having diameter less than or equal to $\m(d(\a),d(\b),\mathcal{D})$,
\item[\rm (C3)]  $\y_i$ is not an idempotent for all $1\leq i\leq m-1$,
\item[\rm (C4)] For every $1\leq i\leq m-1$, there exists an irreducible $H$-sequence connecting $\a\y_1\y_2\ldots \y_i$ with an element of the form $\c_i\tilde{\t}_i$ where $(\c_i,\d_i) \in H$.
\end{itemize}
\end{Lem}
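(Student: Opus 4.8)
The final statement is Lemma~\ref{Why}: from an irreducible $H$-sequence we extract $\y_1,\ldots,\y_m$ with $\u=\y_1\cdots\y_m$, bounded weights (C2), non-idempotency of the first $m-1$ factors (C3), and the connectivity condition (C4) that each partial product $\a\y_1\cdots\y_i$ is joined by an irreducible sequence to some $\c_i\tilde{\t}_i$ with $(\c_i,\d_i)\in H$.

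Let me think about how to prove this.

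Lemma~\ref{lem:thedecomp} gives me $\z, \y^{(1)}, \ldots, \y^{(n)}$ with $\u = \y^{(1)}\cdots\y^{(n)}\z$, each of diameter $\le \mathcal{D}'$, plus the irreducibility statements (Y3). So (C1) and the diameter bounds are essentially in hand from the decomposition lemma.

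The $\y_i$ are obtained by **merging** runs of idempotents with the following non-idempotent. The prose right before the lemma explains this merging: set $\y^{(n+1)}=\z$, and whenever there's a maximal run of idempotents followed by a non-idempotent, absorb the run into that non-idempotent to form one $\y_i$. This ensures (C3). The weight bound (C2) follows because each $\y_i$ is a product of factors all of diameter $\le\mathcal{D}'$, and idempotent factors don't increase the diameter beyond $\mathcal{D}'$ (noted in the prose: multiplying idempotents of diameter $\le\mathcal{D}'$ keeps diameter $\le\mathcal{D}'$), so $\y_i$ has diameter $\le\mathcal{D}'$ and hence weight $\le\mathcal{W}$.

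So the real content — and the hard part — is (C4): the connectivity condition linking each partial product to an element $\c_i\tilde\t_i$ with $(\c_i,\d_i)\in H$ via an irreducible sequence.

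Now let me think about (C4) carefully. Given the merging, $\y_1\cdots\y_i$ equals $\y^{(1)}\cdots\y^{(k)}$ for some $k$ where $\y^{(k)}$ is the non-idempotent ending the $i$-th merged block. We have $\u^{(k+1)} = \y^{(1)}\cdots\y^{(k)}$ from (Y1), so $\a\y_1\cdots\y_i = \a\u^{(k+1)}$. From (Y3), the sequence $\a\u^{(k+1)}=\c_1\t_1^{(k+1)},\ldots,\d_{k-1}\t_{k-1}^{(k+1)}=\c_k\t_k^{(k+1)}$ is irreducible. The last equation is $\d_{k-1}\t^{(k+1)}_{k-1}=\c_k\t^{(k+1)}_k$, and since $(\d_{k-1},\c_k)$... wait, I need the pair in $H$. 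The $H$-sequence has $(\c_j,\d_j)\in H\cup H^{-1}$. Since $H=H^{-1}$, I can take $(\c_k,\d_k)\in H$, and $\a\u^{(k+1)}$ is connected to $\c_k\t_k^{(k+1)}$ — this is the element $\c_i\tilde\t_i$ with $\tilde\t_i=\t_k^{(k+1)}$ and $\c_i=\c_k$. So (C4) should follow by reading off the appropriate index from the decomposition.

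**The approach I would take.** I would not reprove Lemma~\ref{lem:thedecomp}; I would invoke it and the merging construction described in the running text. First I would fix the decomposition $\u=\y^{(1)}\cdots\y^{(n)}\z$ from Lemma~\ref{lem:thedecomp}, set $\y^{(n+1)}=\z$, and define the merged sequence $\y_1,\ldots,\y_m$ precisely: traverse $\y^{(1)},\ldots,\y^{(n+1)}$, grouping each maximal block of idempotents together with its immediately following non-idempotent into a single $\y_j$ (the final $\y_m$ may consist of a trailing idempotent block if $\z$ is idempotent). I would then verify (C1)–(C3) quickly using (Y1) and the diameter/weight observations, noting (C3) holds by construction since every $\y_i$ for $i<m$ ends in a non-idempotent.

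**Carrying out (C4).** The plan is to track, for each $i<m$, the index $k=k(i)$ such that $\y_1\cdots\y_i=\y^{(1)}\cdots\y^{(k)}=\u^{(k+1)}$, where $\y^{(k)}$ is the terminal non-idempotent of block $i$. Then $\a\y_1\cdots\y_i=\a\u^{(k+1)}$, and (Y3) with $j=k+1$ supplies the irreducible $H$-sequence $\a\u^{(k+1)}=\c_1\t_1^{(k+1)},\ldots,\d_{k-1}\t_{k-1}^{(k+1)}=\c_k\t_k^{(k+1)}$. Its final term is $\c_k\t_k^{(k+1)}$; taking $\tilde\t_i=\t_k^{(k+1)}$, $\c_i=\c_k$, $\d_i=\d_k$, and using $(\c_k,\d_k)\in H$ (legitimate since $H=H^{-1}$), I obtain exactly the statement of (C4). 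The one technical wrinkle is the boundary case where a block starts at the very first position or where $\z$ is idempotent; I would check that the indexing still produces a valid $k\ge 1$ and that the truncated sequence in (Y3) is nonempty, which it is for $i<m$ because such $\y_i$ end in a genuine non-idempotent $\y^{(k)}$ with $k\le n$.

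**Main obstacle.** The delicate point is bookkeeping the correspondence between the merged index $i$ and the original index $k(i)$, and confirming that each $\a\y_1\cdots\y_i$ genuinely coincides with some $\a\u^{(k+1)}$ appearing in Lemma~\ref{lem:thedecomp}, so that (Y3) applies verbatim. Everything else — the weight bound via the diameter-stability of idempotent products, and non-idempotency — is routine given the earlier lemmas. I expect the proof to be short, essentially a careful transcription of the merging construction together with an index-matching argument invoking (Y1) and (Y3).
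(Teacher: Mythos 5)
Your proposal is correct and follows essentially the same route as the paper: the paper gives no separate proof of Lemma~\ref{Why}, but obtains it exactly as you do, from the decomposition of Lemma~\ref{lem:thedecomp} followed by the idempotent-merging construction, with (C2) justified by the fact that a product of idempotents of diameter at most $\mathcal{D}'$ followed by one non-idempotent of diameter at most $\mathcal{D}'$ still has diameter at most $\mathcal{D}'$, and (C4) obtained by identifying each partial product $\a\y_1\cdots\y_i$ with some $\a\u^{(k+1)}$ and invoking (Y3) together with $H=H^{-1}$. Your index-matching argument, including the boundary cases, is precisely the bookkeeping the paper leaves implicit.
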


We aim to show that the right annihilator congruence
\[
r(\a\rho)=\{(\u,\V) \in \fla \times \fla:\a\u \mathrel{\rho} \a\V\}
\]
is finitely generated for all $\a \in \fla$.
To show this, let $\a \in \fla$ be fixed.
Now let
\[
\mathbb{K}= \{\a\u\rho:\exists\, \b\V \in \fla\text{ with }d(\b)\leq \m(d(\a),\mathcal{D}) \text{ and } (\a\u,\b\V) \text{  irreducible}\}.
\]

\begin{Lem}\label{K}
The set $\mathbb{K}$ is finite.
\end{Lem}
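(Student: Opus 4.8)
The plan is to bypass the factorisation machinery of Lemma~\ref{lem:thedecomp} and Lemma~\ref{Why} entirely: finiteness of $\mathbb{K}$ should fall out directly from the ``smallness'' result, Lemma~\ref{Sm}. Write $\mathcal{D}'=\m(d(\a),\mathcal{D})$, the constant appearing in the definition of $\mathbb{K}$. Fix a class $\a\u\rho\in\mathbb{K}$ and choose a witness $\b\V$ with $d(\b)\le\mathcal{D}'$ together with an irreducible $H$-sequence connecting $\a\u$ and $\b\V$. Since $d(\a),d(\b)\le\mathcal{D}'$ and $\mathcal{D}\le\mathcal{D}'$, we have $\m(d(\a),d(\b),\mathcal{D})=\mathcal{D}'$, so the bound supplied by Lemma~\ref{Sm} is exactly $2\mathcal{D}'$.

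First I would record that every full product occurring in an $H$-sequence lies in a single $\rho$-class: consecutive products differ either by an equality in $\FLA$ or by a single application of a pair $(\c_i,\d_i)\in H$, and $(\c_i,\d_i)\in H$ gives $\c_i\t_i\mathrel{\rho}\d_i\t_i$. Hence each such product, and in particular the element $\mathbf{e}=(E,e)$ produced by Lemma~\ref{Sm}, satisfies $\mathbf{e}\mathrel{\rho}\a\u$; that is, $\mathbf{e}\rho=\a\u\rho$. Thus $\a\u\rho$ has a representative of small diameter, namely one with $d(\mathbf{e})\le 2\mathcal{D}'$.

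The decisive observation is that, over a finite alphabet, small diameter forces small weight. Since $\mathbf{e}\in\FLA$ we have $e\in E$, so $l(e)\le d(\mathbf{e})\le 2\mathcal{D}'$; and $E$ is a prefix closed subset of $\Omega^*$ all of whose words have length at most $2\mathcal{D}'$, whence $|E|\le N:=\sum_{k=0}^{2\mathcal{D}'}|\Omega|^{k}$, a finite number because $\Omega$ is finite. Consequently $w(\mathbf{e})=|E|-1+l(e)\le N-1+2\mathcal{D}'=:B$, a bound depending only on $\a$ and $H$ (through $\mathcal{D}$), not on the particular $\a\u$.

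It then follows that every class in $\mathbb{K}$ meets the set $\{\w\in\FLA:w(\w)\le B\}$, which is finite since $\Omega$ is finite (there are finitely many prefix closed $E\subseteq\Omega^*$ of bounded size and finitely many choices of $e\in E$). Therefore $\mathbb{K}$ is finite, as required. I expect no serious obstacle here: the genuine difficulty has already been absorbed into Lemma~\ref{Sm}, where the leanness of irreducible sequences is exploited. The only points needing care are the matching of constants, $\m(d(\a),d(\b),\mathcal{D})=\mathcal{D}'$ (which uses $d(\b)\le\mathcal{D}'$), and the remark that the small-diameter element delivered by Lemma~\ref{Sm} is a product occurring in the sequence and hence $\rho$-related to $\a\u$; I would present this lemma as a short corollary of Lemma~\ref{Sm}.
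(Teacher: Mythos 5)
Your proof is correct and takes essentially the same route as the paper's own: both apply Lemma~\ref{Sm} to an irreducible $H$-sequence witnessing membership in $\mathbb{K}$, observe that the small-diameter element it produces lies in the same $\rho$-class as $\a\u$, and conclude finiteness because, for finite $\Omega$, there are only finitely many elements of $\FLA$ of bounded diameter. Your matching of constants ($\m(d(\a),d(\b),\mathcal{D})=\m(d(\a),\mathcal{D})$ when $d(\b)\leq \m(d(\a),\mathcal{D})$) and the detour through the weight bound simply make explicit what the paper leaves implicit.
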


\begin{proof}
Let $\a\u\rho \in \mathbb{K}$ and let
\[
\a\u=\c_1\t_1,\ldots,\d_n\t_n=\b\V
\]
be an irreducible $H$-sequence connecting $\a\u$ to an element $\b\V \in \fla$ testifying $\a\u\rho \in \mathbb{K}$.
Then by Lemma \ref{Sm} there exists  an element in the sequence having diameter less than or equal to $2\m(d(\a),\mathcal{D})$.
Since there are only finitely many such elements of $\fla$, we have that $\mathbb{K}$ is finite.
\end{proof}

Now let $\mathcal{K}=\left|\mathbb{K}\right|$, and let us define the set
\[
H'=\{(\u,\V):\a\u \mathrel{\rho} \a\V \text{ and }w(\a\u),w(\a\V) \leq (\mathcal{K}+3)\mathcal{W}'\},
\]
where $\mathcal{W}'$ is the maximum of the weights of elements of $\fla$ having diameter less than or equal to $2\m(d(\a),\mathcal{D})$.

\begin{Lem}\label{A}
The finite set $H'$ generates the right annihilator congruence of $\a\rho$.
\end{Lem}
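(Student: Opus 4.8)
The plan is to show that $H'$ generates $r(\a\rho)$, where recall
\[
r(\a\rho)=\{(\u,\V):\a\u\mathrel{\rho}\a\V\}.
\]
Clearly $H'\subseteq r(\a\rho)$, so one inclusion is trivial. For the other, I would take an arbitrary pair $(\u,\V)\in r(\a\rho)$, so that $\a\u\mathrel{\rho}\a\V$, and aim to show it lies in the right congruence $\langle H'\rangle$. Since $\a\u\mathrel{\rho}\a\V$, there is an $H$-sequence connecting $\a\u$ to $\a\V$; by Lemma~\ref{Irr} I may first reduce to the situation where $\a\u$ and $\a\V$ are themselves connected by an \emph{irreducible} sequence, at the cost of writing $\u=\u_0\y$ and $\V=\V_0\y$ for a common right factor $\y$, with $(\u_0,\V_0)$ irreducible. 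Because $\langle H'\rangle$ is a right congruence, it suffices to prove $(\u_0,\V_0)\in\langle H'\rangle$, as then $(\u,\V)=(\u_0,\V_0)\y\in\langle H'\rangle$. Thus I reduce to the case where $(\a\u,\a\V)$ is irreducible.

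Next I would use Lemma~\ref{Why} to decompose $\u=\y_1\y_2\cdots\y_m$, where each $\y_i$ has weight at most $\mathcal{W}'$, each $\y_i$ (bar possibly the last) is non-idempotent, and, crucially by (C4), each prefix product $\a\y_1\cdots\y_i$ is connected by an irreducible sequence to some element of the form $\c_i\tilde{\t}_i$ with $(\c_i,\d_i)\in H$. The idea is to trace the chain of prefixes $\a,\ \a\y_1,\ \a\y_1\y_2,\ \ldots,\ \a\u$ through $\rho$, and likewise for $\V$, building an $H'$-sequence step by step. The key point is that (C4) places each $\a\y_1\cdots\y_i\rho$ inside the finite set $\mathbb{K}$ of Lemma~\ref{K} (after pairing with a suitable small partner), so only finitely many distinct $\rho$-classes can arise among these prefix products. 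Since $|\mathbb{K}|=\mathcal{K}$, by a pigeonhole argument along a sufficiently long chain two prefixes must lie in the same $\rho$-class; the bound $(\mathcal{K}+3)\mathcal{W}'$ in the definition of $H'$ is precisely what is needed to guarantee that whenever the $\rho$-class repeats, the corresponding pair of right factors has both weights within range to belong to $H'$. This lets me excise a redundant factor and shorten $\u$, arguing by induction on $m$ (or on $w(\a\u)$) that $(\u,\V)$ is generated by the short pairs in $H'$.

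I expect the main obstacle to be the bookkeeping that converts the pigeonhole repetition into an actual $H'$-relation. Concretely, if $\a\y_1\cdots\y_i\mathrel{\rho}\a\y_1\cdots\y_j$ for some $i<j$, I want to conclude that $(\y_1\cdots\y_i,\y_1\cdots\y_j)$ — or some related pair with a common cancellable right factor stripped off — belongs to $H'$, which requires controlling both the weights and the way the factors $\y_{i+1}\cdots\y_j$ interact with the left multiplier $\a$. The weight estimate $w(\y_i)\le\mathcal{W}'$ together with subadditivity (W1) of Lemma~\ref{Basic} bounds $w(\a\y_1\cdots\y_i)$ linearly in $i$, and the repetition must occur within the first $\mathcal{K}+1$ steps; the constant $\mathcal{K}+3$ leaves room to absorb the contributions of $\a$ and of the endpoint comparison with $\V$. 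Once a repeated $\rho$-class is located and the corresponding short pair is placed in $H'$, I would replace $\u$ by the shorter word obtained by deleting the intervening block and invoke the inductive hypothesis, the base case being when $m$ is small enough that $(\u,\V)\in H'$ outright. The delicate part throughout is ensuring that each substitution stays inside the right congruence $\langle H'\rangle$ and that the accompanying $\V$-side is handled symmetrically, using the irreducibility to keep all the intermediate elements small.
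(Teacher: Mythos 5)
Your proposal follows the paper's strategy step for step: reduce to an irreducible pair via Lemma~\ref{Irr}, decompose $\u$ via Lemma~\ref{Why}, use the finiteness of $\mathbb{K}$ (Lemma~\ref{K}) together with (C4) to find $1\le i<j\le\mathcal{K}+1$ with $\a\y_1\cdots\y_i\mathrel{\rho}\a\y_1\cdots\y_j$, observe that the weight bound places the pair $(\y_1\cdots\y_i,\y_1\cdots\y_j)$ in $H'$, multiply on the right by $\y_{j+1}\cdots\y_m$ to excise the block, and finish by induction and transitivity. However, there is a genuine gap in the induction itself: neither of your proposed measures is well-founded for this argument. If you induct on $m$, note that after excision the pair $(\t,\V)$ with $\t=\y_1\cdots\y_i\y_{j+1}\cdots\y_m$ must be fed back into the machine, which means first reducing it (Lemma~\ref{Irr}) and then applying Lemma~\ref{Why} to a \emph{new} irreducible sequence; the truncated factorisation does not satisfy (C4) for the new pair (its prefix products are different), and the number of factors in the new decomposition is not controlled by $m-(j-i)$, so ``$m$ decreases'' is simply not available. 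If instead you induct on $w(\a\u)$, the step fails because weight is not monotone under deletion of interior factors: in $\FLA$ one can have $w(\x\z)>w(\x\y\z)$, for instance $\x=(\{\epsilon,p,pq,pqq\},\epsilon)$, $\y=(\{\epsilon,p\},p)$, $\z=(\{\epsilon,q,qq\},qq)$ gives $w(\x\z)=7>6=w(\x\y\z)$; so $w(\a\t)$ may well exceed $w(\a\u)$ and the induction need not terminate.

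The paper's fix is to induct on $l(au)+l(av)$, the total length of the second coordinates. In $\FLA$ second coordinates concatenate without cancellation, so $l(at)=l(au)-\sum_{k=i+1}^{j}l(y_k)$; since $m>\mathcal{K}+2$ we have $j\le\mathcal{K}+1\le m-2$, so the excised block contains $\y_j$, which by (C3) is \emph{not} idempotent, i.e.\ has nonempty second coordinate, whence $l(at)<l(au)$ strictly. (This is precisely why Lemma~\ref{Why} merges idempotent factors into their non-idempotent successors.) With this measure everything closes: $(\u,\t)\in\langle H'\rangle$ by the excision, $\a\t\mathrel{\rho}\a\V$ with $l(at)+l(av)<l(au)+l(av)$, so $(\t,\V)\in\langle H'\rangle$ by the induction hypothesis (after a harmless reduction via Lemma~\ref{Irr}, which does not increase second-coordinate lengths), and transitivity gives $(\u,\V)\in\langle H'\rangle$. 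One further bookkeeping point you omit, which the paper needs: normalise at the outset so that $w(\a\u)\ge w(\a\V)$; then bounding $w(\a\u)$ alone certifies membership in $H'$, both in the base case (where Lemma~\ref{Small} gives $d(\a\u)\le 2\m(d(\a),\mathcal{D})$, hence $w(\a\u)\le\mathcal{W}'$) and in the case $w(\a\u)\le(\mathcal{K}+3)\mathcal{W}'$.
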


\begin{proof}
Denote the right annihilator congruence of $\a\rho$ by $\tau$.
By definition, $H'\subseteq \tau$.
Now let $(\u,\V) \in \tau$.
We are going to show that $(\u,\V) \in \langle H'\rangle$.
Without loss of generality we can suppose that $w(\a\u)\geq w(\a\V)$.
If the pair $(\a\cdot\u,\a\cdot\V)$ is reducible, then by Lemma \ref{Irr} there exist elements $\u',\V'$ and $\y$ such that the pair $(\a\u',\a\V')$ is irreducible and $(\u,\V)=(\u',\V')\y$.
We therefore suppose that the pair $(\a\cdot\u,\a\cdot\V)$ is irreducible and prove by induction on $l(au)+l(av)$ that $(\u,\V) \in \langle H'\rangle$.
If $l(au)+l(av)\leq \m(d(\a),\mathcal{D})$ then certainly $l(au)\leq \m(d(\a),\mathcal{D})$, so by Lemma \ref{Small}, $d(\a\u) \leq 2\m(d(\a),\mathcal{D})$, thus $w(\a\V)\leq w(\a\u) \leq \mathcal{W}'$, so $(\a\u,\a\V) \in H'$.

Suppose now that whenever $(\a\u',\a\V') \in \tau$ is any irreducible pair such that $l(au')+l(av')\leq M$ for some $M\geq \m(d(\a),\mathcal{D})$, then $(\a\u',\a\V') \in \langle H'\rangle$.
Let $(\a\cdot\u,\a\cdot\V) \in \tau$ be an irreducible pair such that $l(au)+l(av) = M+1$.
We are going to show that $(\a\u,\a\V) \in \langle H'\rangle$.
If $w(\a\u) \leq (\mathcal{K}+3)\mathcal{W}'$, then by definition $(\a\u,\a\V) \in H'$, so we can suppose that $w(\a\u)>(\mathcal{K}+3)\mathcal{W}'$.
Of course, this implies that $d(\a\u)>2\text{max}(d(\a),\mathcal{D})$.

Now let
\[
\a\u=\c_1\t_1,\ldots,\d_n\t_n=\a\V
\]
be an irreducible $H$-sequence connecting $\a \u$ and $\a\V$.
Note that $n\geq 1$, for otherwise $\a\u=\a\V$ is an irreducible $H$-sequence such that $d(\a\u)>2\max(d(\a),\mathcal{D})$, which contradicts Lemma \ref{Two}.
By Lemma \ref{Why} we have that there exist elements $\y_1,\ldots,\y_m$ satisfying Conditions (C1)-(C4).
Of course, $\mathcal{W}<\mathcal{W}'$, for the latter corresponds to a doubled diameter.
Furthermore, since $w(\a),w(\y_i)\leq \mathcal{W}'$ for every $i$, we have that $w(\a\y_1\ldots\y_m)\leq (m+1)\mathcal{W}'$.
However, $w(\a\y_1\ldots \y_m) > (\mathcal{K}+3)\mathcal{W}'$, so that making use of 
Lemma~\ref{Basic}, we see that $m> \mathcal{K}+2$.
By Condition (C4), $(\a\y_1 \ldots \y_i)\rho \in \mathbb{K}$ for all $1 \leq i \leq m-1$, so we have that there exist $1\leq i<j\leq \mathcal{K}+1$ such that
\[
\a\y_1\ldots\y_i \mathrel{\rho} \a\y_1\ldots \y_j.
\]
Note that $w(\a\y_1\ldots \y_i),w(\a\y_1\ldots \y_j) \leq (\mathcal{K}+2) \mathcal{W}'$, so we have that the pair
\begin{equation}\label{Nyekk}
(\y_1\ldots\y_i,\y_1\ldots \y_j)
\end{equation}
is contained in $H'$.
For brevity, denote the product $\y_1\ldots \y_i \y_{j+1}\ldots \y_m$ by $\t$.
If we multiply the pair (\ref{Nyekk}) by $\y_{j+1}\ldots\y_m$, we conclude that
\[
(\t,\u) \in \langle H'\rangle,
\]
so $\a\t \mathrel{\rho} \a\V$.
Note that $l(at) < l(au)$, because $\t$ lacks at least one non-idempotent factor (namely $\y_j$).
As a consequence $l(at)+l(av) < l(au)+l(av)=M+1$, so by the induction hypotheses we have that
\[
(\t,\V) \in \langle H'\rangle.
\]
That is, $(\t,\u),(\t,\V) \in \langle H' \rangle$, so by transitivity we have that $(\u,\V) \in \langle H'\rangle$, and the lemma is proved.

\end{proof}

\begin{Lem}\label{B}
Let $\a,\b \in \fla$, $H\subseteq \fla \times \fla$ be finite and let $\rho=\langle H\rangle$ be a finitely generated right congruence.
Then
\[
\a\rho \cdot S \cap \b\rho \cdot S=\{\c\rho: \c \mathrel{\rho} \a\u \mathrel{\rho} \b\V \mbox{ for some }\u,\V \in \fla\}
\]
is either empty or finitely generated as a right $S$-act.
\end{Lem}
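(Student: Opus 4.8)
The plan is to show that the intersection $\a\rho\cdot S\cap \b\rho\cdot S$, when nonempty, is generated as a right $S$-act by a finite set of $\rho$-classes of the form $\c\rho$ where $\c$ has small diameter. The key tool is the machinery of irreducible $H$-sequences developed in Section~\ref{sec:positive}, applied simultaneously on the $\a$-side and the $\b$-side. First I would fix an element $\c\rho$ in the intersection, so that $\c\mathrel{\rho}\a\u$ and $\c\mathrel{\rho}\b\V$ for some $\u,\V\in\fla$; then necessarily $\a\u\mathrel{\rho}\b\V$, and the pair $(\a\u,\b\V)$ is connected by some $H$-sequence. Using Lemma~\ref{Irr} I would replace this by an irreducible sequence: there exist $\y,\u',\V'$ with $\u=\u'\y$, $\V=\V'\y$, $\a\u'\mathrel{\rho}\b\V'$, and the pair $(\a\u',\b\V')$ irreducible. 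Because $\c\mathrel{\rho}\a\u=\a\u'\y\mathrel{\rho}\b\V'\y=\b\V$, the class $\a\u'\rho\cdot\y=\c\rho$ is obtained from the class $\a\u'\rho$ (equivalently $\b\V'\rho$, which lies in the intersection) by right multiplication by $\y$. Thus every element of the intersection is a multiple of an element coming from an \emph{irreducible} pair, and it suffices to generate using those.

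The heart of the argument is then to bound the irreducible pairs. By Lemma~\ref{Sm}, any irreducible $H$-sequence connecting $\a\u'$ to $\b\V'$ contains an element of diameter at most $2\m(d(\a),d(\b),\mathcal{D})$. I would combine this with the decomposition supplied by Lemma~\ref{Why} (and its underlying Lemma~\ref{lem:thedecomp}), writing $\u'=\y_1\cdots\y_m$ with each factor of bounded weight and with the partial products $\a\y_1\cdots\y_i$ controlled by Condition~(C4); the analogous decomposition applies to $\V'$ on the $\b$-side. The goal is to show that if the pair $(\a\u',\b\V')$ is irreducible and $w(\a\u')$ exceeds a fixed threshold of the form $(\mathcal K+c)\mathcal W'$, then some partial product class repeats among the $\mathbb K$-classes, allowing one to cut out a non-idempotent factor and produce a strictly shorter multiple $\t$ with $\c\rho=\a\t\rho\cdot(\text{something})$ still in the intersection and $(\t,\u')$ a consequence of a finite generating set. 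This is exactly the pigeonhole step of Lemma~\ref{A}, now run in parallel on both sides so that the repetition is forced simultaneously in the $\a$-chain and tracked through the $\b$-chain.

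Concretely I would define the finite generating set to consist of all classes $\c\rho$ arising from irreducible pairs $(\a\u',\b\V')$ with $w(\a\u')$ and $w(\b\V')$ below the threshold, and argue by induction on $l(au')+l(bv')$ that every intersection element is an $S$-multiple of one of these, mirroring the inductive descent in Lemma~\ref{A}. Finiteness of this generating set follows because, by Lemma~\ref{Sm} applied to the threshold-bounded irreducible sequences, only finitely many $\rho$-classes occur (when $\Omega$ is finite), exactly as in Lemma~\ref{K}; the passage to infinite $\Omega$ is handled as in Section~\ref{sec:flacoherent} by observing that any finite $H$, together with $\a$ and $\b$, involves only finitely many generators. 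The main obstacle I anticipate is the bookkeeping required to synchronise the two decompositions: the factorisation $\u'=\y_1\cdots\y_m$ is adapted to the $\a$-side, and I must verify that the repeated-class argument can be set up so that deleting a factor still yields an element whose class lies in $\a\rho\cdot S\cap\b\rho\cdot S$, rather than merely in $\a\rho\cdot S$. Ensuring that the cut produces a legitimate \emph{intersection} element (so that transitivity of $\rho$ reconstructs both $\a\u\mathrel{\rho}\c$ and $\b\V\mathrel{\rho}\c$) is the delicate point, and it is where the irreducibility of the connecting sequence, rather than just of the two sides separately, must be used.
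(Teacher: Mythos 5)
Your first paragraph is, in essence, already the paper's entire proof: given $\a\u\rho=\b\V\rho$ in the intersection, Lemma~\ref{Irr} produces an irreducible pair $(\a\u',\b\V')$ and $\y$ with $(\a\u,\b\V)=(\a\u',\b\V')\y$, so the intersection is generated by the set $\mathbb{K}'$ of \emph{all} classes $\a\u'\rho$ such that $(\a\u',\b\V')$ is irreducible for some $\V'$; and $\mathbb{K}'$ is finite because, by Lemma~\ref{Sm}, any irreducible sequence connecting $\a\u'$ to $\b\V'$ contains an element of diameter at most $2\m(d(\a),d(\b),\mathcal{D})$, so each class in $\mathbb{K}'$ contains one of the finitely many elements of bounded diameter (recall this section assumes $\Omega$ finite; the general case is treated separately). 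Where you diverge from the paper is in your second and third paragraphs: you shrink the generating set to weight-bounded irreducible pairs and then rerun the pigeonhole/descent machinery of Lemma~\ref{A}. That machinery is superfluous here, and the distinction is worth internalising: in Lemma~\ref{A} the generators are \emph{pairs} $(\u,\V)$, which must literally have bounded weight for $H'$ to be finite, so a descent is unavoidable; in Lemma~\ref{B} the generators are $\rho$-\emph{classes}, and a class is already ``small'' as soon as it \emph{contains} a small element, which Lemma~\ref{Sm} hands you for free. Moreover, the point you single out as delicate --- that cutting a factor might land you merely in $\a\rho\cdot S$ rather than in the intersection --- is a non-issue: the pigeonhole relation $\a\y_1\cdots\y_i\mathrel{\rho}\a\y_1\cdots\y_j$, multiplied on the right by $\y_{j+1}\cdots\y_m$, shows the cut element lies in the \emph{same} $\rho$-class as $\a\u'$, hence automatically still in $\a\rho\cdot S\cap\b\rho\cdot S$; no synchronisation of decompositions on the two sides is needed. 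So your route would very likely go through, but at the cost of an induction and parallel bookkeeping that the statement does not demand, and whose anticipated difficulty stems from overlooking that the reduction step preserves $\rho$-classes exactly.
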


\begin{proof}
Suppose that $\a\rho\cdot S \cap \b \rho\cdot S \neq \emptyset$.
Let
\[
\mathbb{K}'= \{\a\u\rho:\text{ there exists }\V \in \fla, \text{ such that } (\a\u,\b\V) \text{ is irreducible}\}.
\]
Note that similarly to the set $\mathbb{K}$ defined before Lemma \ref{K}, $\mathbb{K}'$ is also finite, because by Lemma \ref{Sm}, if $(\a\u,\b\V)$ is irreducible then $\a\u$ is $\rho$-related to an element of $\fla$ having diameter less than or equal to $\m(d(\a),d(\b),\mathcal{D})$.
We claim that $\mathbb{K}'$ generates $\a\rho\cdot S \cap \b\rho\cdot S$.
Let $\a\u\rho=\b\V\rho \in \a\rho\cdot S \cap \b\rho\cdot S$.
Then there exists an $H$-sequence
\[
\a\u=\c_1\t_1,\ldots,\d_n\t_n=\b\V
\]
connecting $\a\u$ and $\b\V$.
By Lemma \ref{Irr}, there exist an irreducible pair $(\a\u',\b\V')$ and $\y \in \fla$ such that $(\a\u,\b\V)=(\a\u',\b\V')\y$.
In this case $\a\u'\rho \in \mathbb{K}'$, so $\a\u\rho \in \mathbb{K}' S$, thus $\mathbb{K}'$ generates $\a\rho\cdot S\cap \b\rho \cdot S$.
\end{proof}

As a consequence of Lemmas \ref{A} and \ref{B} we have our first main result.

\begin{Thm}\label{thm:main1}
If $\Omega$ is finite, then the free left ample monoid $\fla$ is right coherent.
\end{Thm}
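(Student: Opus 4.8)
The plan is to recall the characterisation of right coherency stated in the introduction: a monoid $S$ is right coherent if and only if, for every finitely generated right congruence $\rho=\langle H\rangle$ and all $\a,\b\in S$, both the right annihilator congruence $r(\a\rho)=\{(\u,\V):\a\u\mathrel{\rho}\a\V\}$ is finitely generated, and the subact $\a\rho\cdot S\cap\b\rho\cdot S$ of $S/\rho$ is finitely generated. Since we have built up the machinery for precisely these two conditions, the proof of the theorem is essentially a matter of assembling Lemmas~\ref{A} and~\ref{B}.

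The first step is to fix a finitely generated right congruence $\rho$ on $\fla$. As noted at the start of Section~\ref{sec:positive}, we may assume $\rho=\langle H\rangle$ for some finite symmetric $H\subseteq \fla\times\fla$, and we let $\mathcal{D}$ be the maximum diameter occurring among the components of elements of $H$. Then I would invoke Lemma~\ref{A}, which exhibits an explicit finite set $H'$ generating the right annihilator congruence $r(\a\rho)$ for each fixed $\a\in\fla$; this handles the first of Chase's two conditions. For the second condition, I would invoke Lemma~\ref{B}, which shows that $\a\rho\cdot S\cap\b\rho\cdot S$ is either empty or finitely generated (generated by the finite set $\mathbb{K}'$) for all $\a,\b\in\fla$. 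Since both conditions hold for arbitrary $\a,\b$ and arbitrary finitely generated $\rho$, the characterisation yields that $\fla$ is right coherent.

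The one genuine hypothesis that must be tracked carefully is the finiteness of $\Omega$, which enters only through the quantities $\mathcal{W}$, $\mathcal{W}'$ and $\mathcal{K}$: these are defined as maxima of weights, respectively cardinalities, of sets of elements of bounded diameter, and such maxima exist precisely because, when $\Omega$ is finite, there are only finitely many elements of $\fla$ of any given bounded diameter. This is exactly where Lemma~\ref{Sm} (every irreducible sequence contains an element of diameter at most $2\m(d(\a),d(\b),\mathcal{D})$) does its work, guaranteeing the finiteness of $\mathbb{K}$ and $\mathbb{K}'$. I would state explicitly that the theorem as proved is conditional on $\Omega$ being finite, matching the hypothesis. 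The main obstacle is not in this final assembly — which is short — but lies entirely behind us in establishing Lemmas~\ref{A} and~\ref{B}; the only thing to verify here is that the two lemmas together discharge both clauses of the right-coherency criterion, so that no additional argument is needed.

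\begin{proof}
Let $\rho$ be a finitely generated right congruence on $\fla$; as observed at the start of Section~\ref{sec:positive} we may write $\rho=\langle H\rangle$ with $H\subseteq\fla\times\fla$ finite and $H^{-1}=H$. By the criterion of Chase recalled in Section~\ref{sec:intro}, to show that $\fla$ is right coherent it suffices to prove that for all $\a,\b\in\fla$ the right annihilator congruence $r(\a\rho)=\{(\u,\V):\a\u\mathrel{\rho}\a\V\}$ is finitely generated and that the subact $\a\rho\cdot S\cap\b\rho\cdot S$ of $\fla/\rho$ is finitely generated. Since $\Omega$ is finite, the quantities $\mathcal{W}$, $\mathcal{W}'$ and $\mathcal{K}=\left|\mathbb{K}\right|$ appearing in Lemmas~\ref{K}, \ref{A} and~\ref{B} all exist. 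Lemma~\ref{A} shows that $r(\a\rho)$ is generated by the finite set $H'$, establishing the first condition, and Lemma~\ref{B} shows that $\a\rho\cdot S\cap\b\rho\cdot S$ is either empty or finitely generated, establishing the second. As $\a,\b$ and $\rho$ were arbitrary, $\fla$ is right coherent.
\end{proof}
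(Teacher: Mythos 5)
Your proposal is correct and follows the paper's own route exactly: the paper derives Theorem~\ref{thm:main1} as an immediate consequence of Lemmas~\ref{A} and~\ref{B}, which together discharge the two clauses of the coherency criterion of \cite{gould:1992} quoted in Section~\ref{sec:intro}. Your additional remarks on where finiteness of $\Omega$ enters (the existence of $\mathcal{W}$, $\mathcal{W}'$ and $\mathcal{K}$, underwritten by Lemma~\ref{Sm}) are accurate and consistent with the paper's setup.
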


To show Theorem~\ref{thm:main1}  is true for arbitrary $\Omega$ we need a simple consequence of Lemma \ref{Crack}.

\begin{Lem} \label{Pi}
Let $\d\z=\b\V$ and let $\Pi$ be a subset of $\Omega$ containing all letters appearing in $D$ and $B$.
Then there exists $\z',\V' \in \mathrm{FLA}(\Pi)$ and $\x \in \FLA$ such that $\d\z'=\b\V'$ and $(\z,\V)=(\z',\V')\x$.
\end{Lem}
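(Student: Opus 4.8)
The plan is to prove Lemma~\ref{Pi} by extracting from the single equation $\d\z=\b\V$ the part of $Z$ and $V$ whose letters lie outside $\Pi$, pulling it off as a common right factor $\x$. The intuition is that since every letter appearing in $D$ and $B$ lies in $\Pi$, any letter of $Z$ (or $V$) that is \emph{not} in $\Pi$ can only come from the `free' part of $\z$ (respectively $\V$), not from the overlap with $\d$ (respectively $\b$); such letters therefore appear identically on both sides of the product $\d\z=\b\V$ and can be cancelled on the right.

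First I would set up an induction on the number of leaves $x$ of $Z$ (equivalently, on $\left|D \cup dZ\right|$ or on $w(\z)$) that contain a letter outside $\Pi$. If $Z \subseteq \Omega(\Pi)^*$ and $z \in \Omega(\Pi)^*$, and likewise for $V$, then $\z,\V \in \mathrm{FLA}(\Pi)$ already and we take $\x=\mathbf{1}$. Otherwise, choose a leaf $x$ of $Z$ whose reduced word uses a letter not in $\Pi$. Because all letters of $D$ lie in $\Pi$, such a letter cannot be a prefix-position coming from $D$, so in particular $dx \notin D$; and because all letters of $B$ lie in $\Pi$ as well, $dx \notin B$. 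This is exactly the hypothesis needed to apply Lemma~\ref{Crack} to the equation $\d\z=\b\V$ with the leaf $x$. Lemma~\ref{Crack} (Cases (1) or (2), where $dx \notin D$) returns elements $\z',\V',\x_0 \in \FLA$ with $Z'=Z\setminus\{x\}$, $w(\z')<w(\z)$, $\z=\z'\x_0$, $\V=\V'\x_0$ and $\d\z'=\b\V'$, stripping off one offending leaf while preserving the equation.

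The inductive step then applies the hypothesis to the shorter equation $\d\z'=\b\V'$, yielding $\z'',\V'' \in \mathrm{FLA}(\Pi)$ and $\x_1 \in \FLA$ with $\d\z''=\b\V''$ and $(\z',\V')=(\z'',\V'')\x_1$. Setting $\x=\x_1\x_0$ gives $(\z,\V)=(\z',\V')\x_0=(\z'',\V'')\x_1\x_0=(\z'',\V'')\x$ with $\z'',\V'' \in \mathrm{FLA}(\Pi)$, as required. The point that requires the most care is justifying that a leaf carrying a letter outside $\Pi$ really does satisfy $dx\notin D$ and $dx\notin B$: one must argue that, since every letter in the reduced words of $D$ and $B$ belongs to $\Pi$, no word of $D$ or $B$ can contain a letter outside $\Pi$, so in particular $dx$ — which contains such a letter — lies in neither. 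I would also verify that the elements $\z'$ and $\V'$ produced by Lemma~\ref{Crack} still have all their non-$\Pi$ complexity strictly reduced (guaranteed by $Z'=Z\setminus\{x\}$ and $w(\z')<w(\z)$), so that the induction is well-founded. The main obstacle is therefore purely bookkeeping: ensuring the Crack hypotheses are met at each stage and that the extracted factors compose correctly on the right.
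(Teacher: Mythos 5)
Your proposal follows essentially the same route as the paper's proof: both hinge on the observation that a leaf $x$ of $Z$ containing a letter outside $\Pi$ satisfies $dx \notin D \cup B$ (because all letters of $D$ and $B$ lie in $\Pi$ and positive words admit no cancellation), so Lemma~\ref{Crack} applies, stripping the offending leaf while preserving the equation $\d\z=\b\V$ and producing a common right factor. The paper packages this as a minimal-counterexample argument (choose $(\z',\V')$ minimising $w(\z')+w(\V')$ among right-divisors satisfying the equation, then derive a contradiction), whereas you run a well-founded induction on $w(\z)$; these are the same argument in different clothing.

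One point in your case analysis needs to be closed. Your inductive step only ever removes offending leaves from $Z$, while your base case demands that \emph{both} $Z$ and $V$ lie over $\Pi$; so, as written, the case where $Z \subseteq \Pi^*$ but $V \not\subseteq \Pi^*$ falls through both branches. It is in fact vacuous: if $D,Z \subseteq \Pi^*$ then $B \cup bV = D \cup dZ \subseteq \Pi^*$, and since the words involved are positive (no cancellation inside $bv'$), every $v' \in V$ lies in $\Pi^*$; hence $Z$ clean forces $V$ clean. Adding this one line (or arguing symmetrically in $\z$ and $\V$, which is what the paper's ``without loss of generality'' does) completes your proof. A second, smaller point: your alternative measure ``number of offending leaves'' need not strictly decrease, since deleting a leaf can expose a new offending leaf beneath it; but the weight $w(\z)$, which you also invoke and which Lemma~\ref{Crack} guarantees strictly decreases, makes the induction well-founded, so this costs nothing.
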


\begin{proof}
Let $\z',\V'$ be minimal (with respect to $w(\z')+w(\V')$) in $\FLA$ satisfying that there exists $\x \in \FLA$ such that $\d\z'=\b\V', \z=\z'\x$ and $\V=\V'\x$.
We claim that $\z',\V' \in \mathrm{FLA}(\Pi)$.
Suppose on the contrary that either $\z' \not \in \mathrm{FLA}(\Pi)$ or $\V' \not \in \mathrm{FLA}(\Pi)$.
We can suppose without loss of generality that $\z' \not\in \mathrm{FLA}(\Pi)$.
Then there exists a leaf $x \in Z'$ such that $x$ contains a letter which is not in $\Pi$.
In this case clearly $dx \not \in D \cup B$, so Lemma \ref{Crack} implies that there exist elements $\z'',\V'',\x'$ such that $\d\z''=\b\V'', \z'=\z''\x', \V'=\V''\x'$ and $w(\z'')<w(\z')$.
However, these facts together with the observations $\z=\z'' (\x'\x), \V=\V'' (\x'\x)$ contradict the minimality of $\z'$ and $\V'$.
This shows that $\z',\V' \in \mathrm{FLA}(\Pi)$, finishing the proof.
\end{proof}

\begin{Thm} For any set $\Omega$, we have that $\FLA$ is right coherent.
\end{Thm}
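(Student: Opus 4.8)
The plan is to reduce the case of an arbitrary set $\Omega$ to the already-established finite case (Theorem~\ref{thm:main1}) by exploiting the fact that any finite amount of data in $\FLA$ involves only finitely many generators. Concretely, to prove right coherency I must verify the two conditions from the Chase-style characterisation recalled in the introduction: for a finitely generated right congruence $\rho=\langle H\rangle$ on $\FLA$ and any $\a,\b\in\FLA$, that the right annihilator congruence $r(\a\rho)$ is finitely generated, and that the subact $\a\rho\cdot S\cap\b\rho\cdot S$ is finitely generated (or empty). For each such instance, the set $H$ is finite and $\a,\b$ are single elements, so only finitely many letters of $\Omega$ actually occur in $H$, $\a$ and $\b$.

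First I would let $\Pi\subseteq\Omega$ be the finite set of all letters appearing in the first components of the elements of $H$, together with those appearing in $\a$ and $\b$. Then $H$, $\a$ and $\b$ all lie in $\mathrm{FLA}(\Pi)$, and $\mathrm{FLA}(\Pi)$ is a submonoid of $\FLA$. Since $\Pi$ is finite, Theorem~\ref{thm:main1} tells us that $\mathrm{FLA}(\Pi)$ is right coherent, so the corresponding finiteness conditions hold \emph{inside} $\mathrm{FLA}(\Pi)$. The task is to lift these generating sets from $\mathrm{FLA}(\Pi)$ to $\FLA$; this is exactly where Lemma~\ref{Pi} does the work, by allowing any equation $\d\z=\b\V$ in $\FLA$ to be factored through an equation $\d\z'=\b\V'$ with $\z',\V'\in\mathrm{FLA}(\Pi)$ and a common right factor $\x\in\FLA$.

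For the annihilator congruence, let $\rho_\Pi$ be the right congruence on $\mathrm{FLA}(\Pi)$ generated by $H$, and let $H'$ be a finite generating set for $r(\a\rho_\Pi)$ inside $\mathrm{FLA}(\Pi)$, which exists by the finite case. I would claim $H'$ also generates $r(\a\rho)$ in $\FLA$. Given $(\u,\V)$ with $\a\u\mathrel{\rho}\a\V$, an $H$-sequence connects $\a\u$ to $\a\V$; applying Lemma~\ref{Irr} produces an irreducible pair $(\a\u',\a\V')$ with $(\u,\V)=(\u',\V')\y$, and applying Lemma~\ref{Pi} to the relevant equations lets me replace $\u',\V'$ by elements of $\mathrm{FLA}(\Pi)$ at the cost of a common right factor. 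Since the resulting $\mathrm{FLA}(\Pi)$-pair lies in $r(\a\rho_\Pi)=\langle H'\rangle$, and right-multiplying by the extracted factor keeps us in $\langle H'\rangle$ over $\FLA$, we conclude $(\u,\V)\in\langle H'\rangle$. The intersection $\a\rho\cdot S\cap\b\rho\cdot S$ is handled identically: a finite generating set over $\mathrm{FLA}(\Pi)$, pushed out to $\FLA$ via Lemma~\ref{Pi}, still generates, because every element of the intersection is a right multiple of an irreducible one whose `core' lies in $\mathrm{FLA}(\Pi)$.

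The main obstacle, and the place demanding the most care, is checking that the common right factor $\x$ produced by Lemma~\ref{Pi} interacts correctly with the $S$-act structure: one must verify that right-multiplying a relation from $\langle H'\rangle$ by an arbitrary element of $\FLA$ (not merely of $\mathrm{FLA}(\Pi)$) stays within the congruence or subact generated by $H'$ \emph{over the full monoid} $\FLA$, and that the factorisations supplied by Lemmas~\ref{Irr} and~\ref{Pi} can be combined consistently so that the second components and the $H$-sequence structure are preserved simultaneously. Once that bookkeeping is settled, both Chase conditions follow and right coherency of $\FLA$ for arbitrary $\Omega$ is established.
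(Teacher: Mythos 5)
Your overall strategy is the same as the paper's: restrict to the finite alphabet $\Pi$ of letters occurring in $H$, $\a$ and $\b$, invoke Theorem~\ref{thm:main1} for $\mathrm{FLA}(\Pi)$, and use Lemma~\ref{Pi} to push generating sets out to $\FLA$. However, there is a genuine gap at exactly the point you dismiss as ``bookkeeping''. A relation $\a\u \mathrel{\rho} \a\V$ is witnessed by an $H$-sequence consisting of $n+1$ separate equalities, and Lemma~\ref{Pi} applies to one equality at a time: each application produces its \emph{own} right factor $\x$, and these factors need not agree. What you actually need is that the \emph{entire} sequence factors simultaneously through $\mathrm{FLA}(\Pi)$ with a \emph{single} common right factor, so that what remains is a genuine $H$-sequence inside $\mathrm{FLA}(\Pi)$ witnessing $\a\u' \mathrel{\rho'} \a\V'$; only then can you conclude $(\u',\V')\in r(\a\rho_\Pi)=\langle H'\rangle_{\mathrm{FLA}(\Pi)}$. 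This simultaneous factorisation is the heart of the paper's proof: it is established by induction on the length of the sequence, where the inductive step applies Lemma~\ref{Pi} to the first equality (producing a factor $\x$), applies the inductive hypothesis to the tail (producing a factor $\z$), and then crucially applies Lemma~\ref{Pi} a \emph{third} time to the equality $\t_1'\x=\t_1''\z$ to reconcile the two factors into one common factor $\w$. Nothing in your sketch performs this reconciliation, and without it the argument does not close.

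A secondary point: your appeal to Lemma~\ref{Irr} does not help here and is somewhat of a red herring. Passing to an irreducible pair $(\a\u',\a\V')$ does not place $\u'$, $\V'$, or the intermediate terms of the irreducible sequence inside $\mathrm{FLA}(\Pi)$ --- that would itself require a substantial additional argument (one can in fact show that irreducible sequences with endpoints over $\Pi$ live in $\mathrm{FLA}(\Pi)$, but the proof is comparable in length to Lemma \ref{Small} and uses Lemmas \ref{Crack} and \ref{Roll}, not Lemma~\ref{Pi}). The paper's proof of the infinite-alphabet case avoids irreducibility altogether and works directly with arbitrary $H$-sequences via the inductive claim described above; the same claim then disposes of both the annihilator condition and the intersection condition. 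You correctly observe that right-multiplying a pair of $\langle H'\rangle_{\FLA}$ by an arbitrary element of $\FLA$ is harmless (that is automatic for a right congruence); the missing content is entirely in the simultaneous factorisation.
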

\begin{proof} Let $\rho$ be a right congruence on $\FLA$ with finite set of generators
$H$, so that
$\rho=\langle H\rangle_{\mathrm{FLA}(\Omega)}$, and let $\b,\c\in \FLA$.  
 Let $\Pi$ be the finite set of letters occuring in
$\b,\c$ or in components of $H$ and 
put $\rho'=\langle H\rangle_{\mathrm{FLA}(\Pi)}$.  

We claim that for any  $\u,\V\in \FLA$ with 
$\b\u\,\rho\, \c\V$ via an $H$-sequence
\[\b\u=\c_1\t_1, \d_1\t_1=\c_2\t_2,\hdots, \d_n\t_n=\c\V\]
in $\FLA$,  there exist
\[\u', \t_i'\, (1\leq i\leq n), \V'\in {\mathrm{FLA}(\Pi)}, \x\in \FLA\]
such that
\[\u=\u'\x, \t_i=\t_i'\x\, (1\leq i\leq n), \V=\V'\x\]
and
\[\b\u'=\c_1\t_1', \d_1\t_1'=\c_2\t_2',\hdots, \d_n\t_n'=\c\V'.\]

If $n=0$, then $\b\u=\c\V$ so by Lemma \ref{Pi} we have that $(\u,\V)=(\u',\V')\x$ and $\b\u'=\c\V'$ for some $\u',\V' \in \mathrm{FLA}(\Pi)$ and $\x\in\FLA$ as required.

Suppose now that $n>0$ and the result holds for all sequences of length
$n-1$. Consider the $H$-sequence
\[\b\u=\c_1\t_1, \d_1\t_1=\c_2\t_2,\hdots, \d_n\t_n=\c\V.\]
From the first equality, and the fact that $\c_1\in {\mathrm{FLA}(\Pi)}$, we deduce that
there exists $\u',\t_1'\in {\mathrm{FLA}(\Pi)}$ and $\x\in\FLA$ such that
\[\u=\u'\x, \t_1=\t_1'\x\mbox{ and }\b\u'=\c_1\t_1'.\] 
From the remaining part of the sequence, the fact that $\d_1\in {\mathrm{FLA}(\Pi)}$ and our inductive hypothesis, we deduce that 
there exists $\V'', \t_i''\, (1\leq i\leq n)\in {\mathrm{FLA}(\Pi)}$ and $\z\in\FLA$ such that
\[ \t_i=\t_i''\z, \V=\V''\z\mbox{ and }
\d_1\t_1''=\c_2\t_2'',\hdots, \d_n\t_n''=\c\V''.\]
We now examine the equality
\[\t_1=\t_1'\x=\t_1''\z.\]
Again by Lemma~\ref{Pi} we have that $(\x,\z)=(\x',\z')\w$ for some
$\x',\z'\in {\mathrm{FLA}(\Pi)}$ and $\w\in \FLA$ with $\t_1'\x'=\t_1''\z'$.
Now let
\[\tilde{\u}=\u'\x', \tilde{\t}_i=\t_i''\z' \, (1\leq i\leq n)\mbox{ and }\tilde{\V}=
\V''\z'.\]
Then it is easy to check that

\[\u=\tilde{\u}\w, \t_i=\tilde{\t}_i\w\, (1\leq i\leq n), \V=\tilde{\V}\w\]
and
\[\b\tilde{\u}=\c_1\tilde{\t}_1, \d_1\tilde{\t}_1=\c_2\tilde{\t}_2,\hdots, \d_n\tilde{\t}_n=\c\tilde{\V}.\]
Hence our claim holds by induction.

Since  ${\mathrm{FLA}(\Pi)}$ is right coherent, the right
congruence $r(\a\rho')$ on ${\mathrm{FLA}(\Pi)}$ has a finite set of generators
$K$. Clearly $K\subseteq r(\a\rho)$. Conversely, if $(\u,\V)\in r(\a\rho)$, 
then as $\a\u$ is connected to $\a\V$ via an $H$-sequence, we can apply the above claim to obtain that $\a\u'\,\rho'\, \a\V'$ for some $\u',\V'\in {\mathrm{FLA}(\Pi)}$ such that
$(\u,\V)=(\u',\V')\x$ for some $\x\in \FLA$. Thus
$(\u',\V')\in \langle K\rangle_{{\mathrm{FLA}(\Pi)}}
\subseteq \langle K\rangle_{{\mathrm{FLA}(\Omega)}}$,  and it follows  that
$\langle K\rangle_{{\mathrm{FLA}(\Omega)}}=r(\a\rho)$. 

Now take $\b=\a$ and $\c=\a'$ and suppose  that $\a\rho\cdot \FLA\cap \a'\rho\cdot \FLA\neq\emptyset$. Then
$\a\u\,\rho\, \a'\V$ for some $\u,\V\in \FLA$ and we have that $\a\u'\,\rho'\, \a'\V'$ for some
 $\u',\V'\in {\mathrm{FLA}(\Pi)}$ such that
$(\u,\V)=(\u',\V')\x$ for some $\x\in \FLA$. Since
$\a\rho'\cdot {\mathrm{FLA}(\Pi)}\cap \a'\rho'\cdot {\mathrm{FLA}(\Pi)}\neq\emptyset$ and ${\mathrm{FLA}(\Pi)}$ is right coherent, we have that
$\a\rho'\cdot {\mathrm{FLA}(\Pi)}\cap \a'\rho'\cdot {\mathrm{FLA}(\Pi)}= L\cdot {\mathrm{FLA}(\Pi)}$ for some finite set $L=\{ \u_i\rho': 1\leq i\leq n\}$,
where the $\u_i$ are fixed representatives of their $\rho'$-classes. 

For each $i\in \{ 1,\hdots,n\}$ we therefore have that
\[\a\w_i\,\rho'\, \u_i\x_i\,\rho'\, \a'\z_i\]
for some $\w_i,\x_i,\z_i\in {\mathrm{FLA}(\Pi)}$, so that clearly
\[\a\w_i\,\rho\, \u_i\x_i\,\rho\, \a'\z_i\]
and so 
\[L'=\{ \u_i\rho: 1\leq i\leq n\}\subseteq \a\rho\cdot \FLA\cap \a'\rho\cdot \FLA.\]

Conversely, if $\a\b\,\rho\, \a'\c$ then as above we have that
$(\b,\c)=(\b',\c')\t$ for some $\b',\c'\in {\mathrm{FLA}(\Pi)}$ and $\t\in \FLA$ with
$\a\b'\,\rho'\,\a'\c'$. Now $(\a\b')\rho' =(\u_i\rho')\w$ for some 
$i\in \{ 1,\hdots, n\}$ and $\w\in {\mathrm{FLA}(\Pi)}$ so that
$(\a\b')\rho =(\u_i\rho)\w$ and hence $(\a\b)\rho=(\u_i\rho)\w\t\in L'\cdot \FLA$. Thus 
$\a\rho\cdot \FLA\cap \a'\rho\cdot \FLA =L'\cdot \FLA$ as required.
\end{proof}

\section{Coherency and retracts}\label{sec:constructions}

Investigations of how coherency behaves with respect to certain constructions will be the subject of a future paper, however, to show how the coherency of the free monoid follows from our result, we show that retracts of (right) coherent monoids are (right) coherent.

\begin{Def} Let $S$ be a monoid. Then $T\subseteq S$ is a {\em retract} of $S$ if there exists a homomorphism $\phi \colon S \to S$ such that $\phi^2=\phi$ and $\text{Im }\phi=T$.

Note that any retract is a subsemigroup and a monoid.
\end{Def}

\begin{Lem}
Let $S$ be a monoid and let $T$ be a retract of $S$.  
Let $\rho$ be a right congruence on $T'$, and let $\rho'$ be the right congruence on $S$ generated by $\rho$.
Then the restriction of $\rho'$ to $T$ coincides with $\rho$.
\end{Lem}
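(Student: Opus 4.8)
The plan is to use the $H$-sequence characterisation of a generated right congruence (as recalled at the end of Section~\ref{sec:prelims}) together with the fact that a retraction fixes its image pointwise. First I would record the elementary observation that $\phi$ restricts to the identity on $T$: if $x \in T$ then $x = \phi(y)$ for some $y \in S$, so $\phi(x) = \phi^2(y) = \phi(y) = x$. I would also dispose of the easy inclusion at once: since $\rho \subseteq \rho'$ and $\rho \subseteq T \times T$, we have $\rho \subseteq \rho' \cap (T \times T)$. The whole content of the lemma is therefore the reverse inclusion.

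For that reverse inclusion, I would take $s, t \in T$ with $s \mathrel{\rho'} t$ and, treating $\rho$ itself as the (symmetric) generating set, invoke the existence of an $H$-sequence
\[
s = c_1 t_1,\ d_1 t_1 = c_2 t_2,\ \ldots,\ d_n t_n = t
\]
in $S$, where each $(c_i, d_i) \in \rho$ and each $t_i \in S$. The key manoeuvre is then to apply $\phi$ to every equation in this sequence. Because $\phi$ is a homomorphism, each product factors as $\phi(c_i t_i) = \phi(c_i)\phi(t_i)$, and because $c_i, d_i \in T$ and $s, t \in T$ are all fixed by $\phi$, the result is the sequence
\[
s = c_1 \phi(t_1),\ d_1 \phi(t_1) = c_2 \phi(t_2),\ \ldots,\ d_n \phi(t_n) = t.
\]
Since $\operatorname{Im}\phi = T$, each $\phi(t_i) \in T$, so this is a genuine $\rho$-sequence lying entirely inside $T$ and witnessing $s \mathrel{\rho} t$. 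As $\rho$ is a right congruence on $T$, this yields $s \mathrel{\rho} t$, completing the proof.

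There is no serious obstacle here; the argument is essentially a transport of structure along $\phi$. The only point requiring a moment's care is the bookkeeping that ensures the image sequence is still a legitimate $H$-sequence: one must check that applying $\phi$ leaves the pairs $(c_i, d_i)$ unchanged (guaranteed by $\phi|_T = \mathrm{id}_T$) while the intermediate witnesses $t_i$, which need not lie in $T$, are replaced by their images $\phi(t_i) \in T$. Once this is observed, the equalities of the transformed sequence follow directly from the homomorphism property, and the conclusion is immediate.
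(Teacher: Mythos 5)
Your proof is correct and follows essentially the same route as the paper's: both arguments apply the retraction $\phi$ to each equation of an $H$-sequence witnessing $s \mathrel{\rho'} t$, using that $\phi$ is a homomorphism fixing $T$ pointwise, so the pairs $(c_i,d_i)$ survive unchanged while the witnesses $t_i$ are replaced by $\phi(t_i) \in T$. Your explicit verification that $\phi|_T$ is the identity, and your separate treatment of the trivial inclusion $\rho \subseteq \rho' \cap (T\times T)$, are points the paper leaves implicit, but the substance is the same.
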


\begin{proof}
Let $a,b \in T$ such that $a \mathrel{\rho'} b$.
Since $\rho'$ is generated by $\rho$, there exist elements $c_1,\ldots,c_n,d_1,\ldots,d_n \in T$ and $t_1,\ldots, t_n \in S$ such that $c_i\mathrel{\rho} d_i$ for every $1\leq i\leq n$, and such that
\[
a=c_1t_1,\ldots, d_nt_n=b.
\]
If we take the image of this sequence under $\phi$ we obtain the $H$-sequence
\[
a=c_1(t_1\phi),\ldots,d_n(t_n\phi)=b
\]
connecting $a$ and $b$ in $T$, so $a \mathrel{\rho} b$.

\end{proof}

\begin{Thm}\label{Retract}
Let $S$ be a right coherent monoid and let $T$ be a retract of $S$.
Then $T$ is right coherent.
\end{Thm}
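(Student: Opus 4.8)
The plan is to verify the two internal conditions characterising right coherency recalled in Section~\ref{sec:intro}: for every finitely generated right congruence $\rho$ on $T$ and every $a,b\in T$, the right congruence $r^T(a\rho)=\{(u,v)\in T\times T: au\mathrel{\rho}av\}$ is finitely generated, and the subact $(a\rho)T\cap(b\rho)T$ of $T/\rho$ is finitely generated. Fix the retraction $\phi\colon S\to S$ with $\phi^2=\phi$ and $\mathrm{Im}\,\phi=T$, so that $\phi$ fixes $T$ pointwise. Write $\rho=\langle H\rangle_T$ with $H\subseteq T\times T$ finite, and let $\rho'=\langle H\rangle_S$. By the preceding lemma the restriction of $\rho'$ to $T$ equals $\rho$; this is the bridge used throughout.

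For the annihilator condition, first observe that since $a,u,v\in T$ we have $r^T(a\rho)=r^S(a\rho')\cap(T\times T)$. As $S$ is right coherent, $r^S(a\rho')=\langle K\rangle_S$ for some finite $K\subseteq S\times S$. I would then take the candidate set $K\phi=\{(c\phi,d\phi):(c,d)\in K\}\subseteq T\times T$. That $K\phi\subseteq r^T(a\rho)$ follows by applying $\phi$ to an $H$-sequence witnessing $ac\mathrel{\rho'}ad$: since $H\subseteq T\times T$ and $a\in T$ are fixed by $\phi$, the image is an $H$-sequence in $T$ yielding $a(c\phi)\mathrel{\rho}a(d\phi)$. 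Conversely, given $(u,v)\in r^T(a\rho)$ we have $(u,v)\in r^S(a\rho')$, so there is a $K$-sequence from $u$ to $v$ in $S$; applying the homomorphism $\phi$, which fixes $u,v\in T$, turns it into a $(K\phi)$-sequence in $T$, whence $(u,v)\in\langle K\phi\rangle_T$. Hence $r^T(a\rho)=\langle K\phi\rangle_T$ is finitely generated.

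For the intersection condition, if $(a\rho)T\cap(b\rho)T=\emptyset$ we are done, so suppose not; then $(a\rho')S\cap(b\rho')S\neq\emptyset$ as well. Right coherency of $S$ gives a finite set $\{w_j\rho':1\le j\le m\}$ generating $(a\rho')S\cap(b\rho')S$, where we may take $w_j\mathrel{\rho'}ap_j\mathrel{\rho'}bq_j$ for some $p_j,q_j\in S$. I claim $\{(w_j\phi)\rho:1\le j\le m\}$ generates the $T$-subact $(a\rho)T\cap(b\rho)T$. Applying $\phi$ to $w_j\mathrel{\rho'}ap_j$ gives $(w_j\phi)\mathrel{\rho}a(p_j\phi)$ with $p_j\phi\in T$, and symmetrically with $b$, so each $(w_j\phi)\rho$ lies in the intersection. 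Given any $c\rho\in(a\rho)T\cap(b\rho)T$ with $c\in T$, we have $c\rho'\in(a\rho')S\cap(b\rho')S$, whence $c\mathrel{\rho'}w_js$ for some $j$ and $s\in S$; applying $\phi$ and using $\phi(c)=c$ gives $c\mathrel{\rho}(w_j\phi)(s\phi)$ with $s\phi\in T$, so $c\rho=(w_j\phi)\rho\cdot(s\phi)$ lies in the required finitely generated subact.

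The two conditions together yield right coherency of $T$. The only genuine work lies in the repeated use of the ``apply $\phi$ to a defining sequence'' device, and the main point to get right is that a pointwise-fixing homomorphic retraction simultaneously (i) carries $S$-sequences to $T$-sequences and (ii) leaves the elements of $T$ at issue — namely $a$, $b$, and the tested elements $u$, $v$, $c$ — unchanged. This is exactly what makes the transported finite sets both lie in, and generate, the required objects over $T$.
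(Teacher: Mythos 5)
Your proof is correct and takes essentially the same approach as the paper's: transport $\rho$ to $\rho'=\langle H\rangle_S$, invoke right coherency of $S$ to obtain finite generating sets for $r(a\rho')$ and for the intersection of principal subacts, and apply the retraction $\phi$ (which fixes $T$ pointwise and carries $S$-sequences to $T$-sequences) to pull these back to finite generating sets over $T$. The only cosmetic difference is that the paper isolates the key observation that $a\mathrel{\rho'}b$ implies $a\phi\mathrel{\rho}b\phi$ as a standalone step, where you instead phrase the bridge via the identity $r^T(a\rho)=r^S(a\rho')\cap(T\times T)$; the sequence-transport arguments are otherwise identical.
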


\begin{proof}
Let $\rho$ be a finitely generated right congruence on $T$, 
so that $\rho=\langle H\rangle_T$ for some finite set $H \subseteq T \times T$.
Denote by $\rho'$ the right congruence on $S$ generated by $\rho$.
Clearly, $\rho'=\langle H\rangle_S$.

First we show that if $a,b \in S$ and $a \mathrel{\rho'} b$, then $a\phi \mathrel{\rho} b\phi$.
For this, let
\[
a=c_1t_1,\ldots, d_nt_n=b
\]
be an $H$-sequence connecting $a$ and $b$ in $S$.
Since $H \subseteq T \times T$, if we take the image of this sequence under $\phi$ we obtain the $H$-sequence
\[
a\phi=c_1(t_1\phi),\ldots,d_n(t_n\phi)=b\phi
\]
connecting $a\phi$ and $b\phi$ in $T$, so that $a\phi \mathrel{\rho} b\phi$.

Now let $a \in T$ be fixed.
Note that $r(a\rho')$ is a right congruence on $S$, and $r(a\rho)$ is a right congruence on $T$.
Since $S$ is right coherent, we have that $r(a\rho')=\langle X\rangle_S$ for some finite $X \subseteq S \times S$.
We claim that the finite set
\[
X\phi=\{(u\phi,v\phi): (u,v) \in X\}\subseteq T \times T
\]
generates $r(a\rho)$.

First note that if $(u,v) \in X$, then $au \mathrel{\rho'} av$, so we have that \[a(u\phi)=(au)\phi \mathrel{\rho} (av)\phi=a(v\phi),\] that is, $(u\phi,v\phi) \in r(a\rho)$.
Thus we have shown that $X\phi \subseteq r(a\rho)$.

On the other hand, if $(u,v) \in r(a\rho)$, then necessarily $(u,v) \in r(a\rho')$, so there exists an $X$-sequence
\[
u=c_1t_1,\ldots,d_nt_n=v
\]
connecting $u$ and $v$ in $S$.
If we take the image of this sequence under $\phi$ (and remember that $u,v \in T$), then we obtain the $X\phi$-sequence
\[
u=(c_1\phi) (t_1\phi),\ldots,(d_n\phi)(t_n\phi)=v
\]
connecting $u$ and $v$.
That is, $(u,v) \in \langle X\phi \rangle_T$, and we have shown that $r(a\rho)$ is finitely generated.

Now suppose that $a,b \in T$ are such that $a\rho\cdot T \cap b\rho\cdot T \neq \emptyset$.
Then clearly $a\rho'\cdot S \cap b\rho'\cdot S \neq \emptyset$, so there exists a finite set $Y \subseteq S$ such that $a\rho'\cdot S \cap b\rho'\cdot S=Y\cdot S$.
We claim that $a\rho\cdot T \cap b\rho\cdot T = Y\phi\cdot T$ where
\[
Y\phi=\{(x\phi) \rho: x\rho' \in Y\}\subseteq T \times T.
\]
Notice that $Y\phi$ is well defined, for if $x \mathrel{\rho'} y$, then $x\phi \mathrel{\rho} y\phi$.

First note that if $x\rho' \in Y$, then $au \mathrel{\rho'} x \mathrel{\rho'} bv$ for some $u,v \in S$.
By an earlier comment,  this implies that $a(u\phi) \mathrel{\rho} x\phi \mathrel{\rho} b(v\phi)$, so $(x\phi)\rho \in a\rho\cdot T \cap b\rho\cdot T$, and so $Y \phi \cdot  T \subseteq a\rho\cdot T \cap b\rho\cdot T$.

Conversely, let $w\rho \in a\rho\cdot T \cap b\rho\cdot T$ for some $w \in T$.
Then clearly $w\rho' \in a\rho'\cdot S \cap b\rho'\cdot S$, so there exist an $x\rho' \in Y$ and $s\in S$ such that $w\rho'=x\rho' \cdot s$, that is, $w \mathrel{\rho'} xs$.
Applying $\phi$ we see that $w=w\phi \mathrel{\rho} (x\phi) (s\phi)$, that is, $w\rho=(x\phi)\rho \cdot s\phi\in
 Y\phi\cdot T$. Consequently,  $a\rho\cdot T \cap b\rho\cdot T \subseteq Y\phi\cdot T$ as required.
\end{proof}

\begin{Cor} \cite{ghr:2013}
The free monoid $\Omega^*$ is right coherent.
\end{Cor}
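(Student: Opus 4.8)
The plan is to exhibit the free monoid $\Omega^*$ as a retract of $\FLA$ and then invoke Theorem~\ref{Retract} together with the right coherency of $\FLA$ just established. First I would recall that $\Omega^*$ embeds into $\FLA$ via the map $\iota\colon a\mapsto (a\da,a)$, sending a word to the pair consisting of its set of prefixes, with the word itself as second coordinate. Since $a\da\subseteq\Omega^*$ is prefix closed and contains $a$, this indeed lands in $\FLA$, and a short computation using $(A,a)(B,b)=(A\cup aB,ab)$ shows that $\iota$ is an injective homomorphism whose image is a copy of $\Omega^*$ sitting inside $\FLA$.

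Next I would define the candidate retraction $\phi\colon\FLA\to\FLA$ by $\phi(A,a)=(a\da,a)$; informally $\phi$ discards the idempotent part $(A,\epsilon)$ of an element and keeps only the prefix tree determined by its second coordinate. The key verification is that $\phi$ is a homomorphism: one computes $\phi\big((A,a)(B,b)\big)=\phi(A\cup aB,ab)=\big((ab)\da,ab\big)$, while $\phi(A,a)\phi(B,b)=(a\da,a)(b\da,b)=\big(a\da\cup a(b\da),ab\big)$, and the point is that $a\da\cup a(b\da)$ is precisely $(ab)\da$. This equality is the heart of the argument and the step I expect to require the most care, since it rests on the fact that every prefix of $ab$ is either a prefix of $a$ or of the form $a$ times a prefix of $b$ --- which is exactly where working inside $\Omega^*$ rather than $\FG$ is essential.

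It then remains to observe that $\phi$ is idempotent, namely $\phi^2(A,a)=\phi(a\da,a)=(a\da,a)=\phi(A,a)$, and that $\text{Im }\phi=\{(a\da,a):a\in\Omega^*\}=\iota(\Omega^*)$, so that $\iota(\Omega^*)$ is a retract of $\FLA$ in the sense of the Definition preceding Theorem~\ref{Retract}. Since $\FLA$ is right coherent for any set $\Omega$, Theorem~\ref{Retract} applies directly and yields that the retract $\iota(\Omega^*)$, and hence $\Omega^*$ itself, is right coherent. Everything other than the homomorphism identity above is routine bookkeeping, so that identity is the only genuinely nontrivial point in the proof.
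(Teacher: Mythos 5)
Your proposal is correct and is essentially the paper's own proof: the paper likewise takes the idempotent homomorphism $\a \mapsto (a\da,a)$ on $\FLA$, whose image is the embedded copy of $\Omega^*$, and applies Theorem~\ref{Retract}. The only difference is that you spell out the verification $a\da \cup a(b\da)=(ab)\da$ and the identification via $\iota$, which the paper leaves implicit.
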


\begin{proof}
Note that the idempotent map
\[
\phi \colon \fla \to \Omega^*, \a \mapsto (a\da,a)
\]
is a homomorphism, so $\Omega^*$ is a retract of $\fla$.
Then Theorem \ref{Retract} implies that $\Omega^*$ is right coherent.
\end{proof}

Note that the free monoid is (right) coherent, however, there exist non-coherent monoids, so the class of (right) coherent monoids is not closed under homomorphic images.

\section{The negative results}\label{sec:negative}

In this section, we show that the free inverse monoid is not left coherent. By duality, neither can it be right coherent.
A few simple remarks then yield that the free left ample monoid is not left coherent and that the free ample monoid is neither left nor right coherent.

Let $\Omega=\{x,y\},\ \a=(\{\epsilon,x\},x) \in \FI$ and $\b=(\{\epsilon,y\},y) \in \FI$.
Denote by $\rho$ the left congruence generated by the pair $(\a,\bf{1})$, and by $\tau$ the left annihilator of $\b\rho$, that is,
\[
\tau=\{(\u,\V): \u\b \mathrel{\rho} \V\b\} \subseteq \FI \times \FI.
\]
It is easy to see that $\tau$ is a left congruence on $\FI$.
We claim that it is not finitely generated. 

The following lemma is effectively folklore, but we prove it here for completeness.

\begin{Lem}\label{Rho}
For every $\u,\V \in \FI$, we have that $\u \mathrel{\rho} \V$ if and only if there exist $m,n \in \mathbb{N}^0$ such that $\u\a^n=\V\a^m$.
\end{Lem}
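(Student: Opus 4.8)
The plan is to identify $\rho$ explicitly with the relation
\[
\sigma = \{(\u,\V) \in \FI \times \FI : \u\a^n = \V\a^m \text{ for some } m,n \in \mathbb{N}^0\},
\]
and then prove the lemma by establishing $\rho = \sigma$. The two inclusions correspond exactly to the two directions of the stated equivalence.

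For the comparatively easy inclusion $\sigma \subseteq \rho$, I would first observe that $\a \mathrel{\rho} \mathbf{1}$, since $(\a,\mathbf{1})$ is a generating pair of $\rho$. As $\rho$ is a \emph{left} congruence, this yields $\u\a \mathrel{\rho} \u\mathbf{1} = \u$ for every $\u \in \FI$, and iterating gives $\u\a^n \mathrel{\rho} \u$ for all $n \geq 0$. Hence if $\u\a^n = \V\a^m$, then $\u \mathrel{\rho} \u\a^n = \V\a^m \mathrel{\rho} \V$, so that $(\u,\V) \in \rho$.

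For the reverse inclusion $\rho \subseteq \sigma$, I would use that $\rho$ is the smallest left congruence containing $(\a,\mathbf{1})$: it therefore suffices to check that $\sigma$ is itself a left congruence and that $(\a,\mathbf{1}) \in \sigma$. Membership of the generator is immediate (take $n=0$, $m=1$), while reflexivity and symmetry of $\sigma$ are clear from its definition. Left compatibility is also routine: if $\u\a^n = \V\a^m$, then left-multiplying by any $\c \in \FI$ gives $(\c\u)\a^n = (\c\V)\a^m$, so $(\c\u,\c\V) \in \sigma$ with the same exponents.

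The one step requiring genuine care is transitivity, precisely because $\FI$ is not cancellative, so powers of $\a$ cannot simply be cancelled. I would handle this by raising both witnessing equations to a common power on the right: given $\u\a^n = \V\a^m$ and $\V\a^p = \w\a^q$, multiplying the first equation by $\a^p$ and the second by $\a^m$ yields $\u\a^{n+p} = \V\a^{m+p} = \w\a^{q+m}$, whence $(\u,\w) \in \sigma$. This exponent-bumping argument is the main (and essentially the only) obstacle; once transitivity is established, $\sigma$ is a left congruence containing $(\a,\mathbf{1})$, so $\rho \subseteq \sigma$, and together with the easy inclusion this gives $\rho = \sigma$ and hence the lemma.
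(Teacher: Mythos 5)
Your proof is correct, but it reaches the harder direction by a genuinely different route than the paper. Writing $\sigma$ for your relation $\{(\u,\V): \u\a^n=\V\a^m \text{ for some } m,n\in\mathbb{N}^0\}$, the paper proves $\rho\subseteq\sigma$ directly from the sequence description of generated congruences recalled in Section 2 (dualised to left congruences): given $\u \mathrel{\rho} \V$, it takes a sequence $\u=\t_1\c_1,\ \t_1\d_1=\t_2\c_2,\ \ldots,\ \t_p\d_p=\V$ with each $(\c_i,\d_i)\in\{(\a,\mathbf{1}),(\mathbf{1},\a)\}$, observes that each step either appends or strips one right factor $\a$ (that is, either $\t_i\c_i=\t_i\d_i\a$ or $\t_i\c_i\a=\t_i\d_i$), and inducts along the sequence; this has the bonus of identifying $n$ and $m$ explicitly as the numbers of steps of the two types. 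You never touch sequences: you verify that $\sigma$ is itself a left congruence containing the generator $(\a,\mathbf{1})$, so that $\rho\subseteq\sigma$ by minimality of $\rho$, the only delicate point being transitivity of $\sigma$, which you settle by raising both witnessing equations to a common power on the right --- exactly the right move, since $\FI$ is not right cancellative and powers of $\a$ cannot simply be cancelled. (The easy inclusion $\sigma\subseteq\rho$ is essentially identical in the two arguments.) Your approach is more self-contained and isolates the single genuine obstacle in one axiom check; the paper's stays within the $H$-sequence framework used throughout the rest of the article and yields the extra combinatorial information about the exponents, which is harmless to lose here since the lemma does not need it.
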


\begin{proof}
It is straightforward that if such $n$ and $m$ exist, then $\u$ and $\V$ are $\rho$-related. For the converse part, suppose that $\u \mathrel{\rho} \V$. Thus, since $\rho$ is generated by $(\a,\bf{1})$, there exist elements $\c_1,\ldots,\c_p,\d_1,\ldots,\d_p,\t_1,\ldots,\t_p \in \FI$ such that for any $1\leq i\leq p$, $(\c_i,\d_i)=(\a,\bf{1})$ or $(\c_i,\d_i)=(\bf{1},\a)$, satisfying
\[
\u=\t_1\c_1,\t_1\d_1=\t_2\c_2,\ldots,\t_{p-1}\d_{p-1}=\t_p\c_p,\t_p\d_p=\V.
\]

Note that for all $1\leq i\leq p$, we have that either $\t_i\c_i=\t_i\d_i\a$ (exactly when $(\c_i,\d_i)=(\a,{\bf 1})$) or $\t_i\c_i\a=\t_i\d_i$ (exactly when $(\c_i,\d_i)=({\bf 1},\a)$). Applying this argument successively to $i=1,2,\ldots,p$, we obtain the result of the lemma (actually, we also see that $n$ and $m$ are just the number of the pairs $(\bf{1},\a)$ and $(\a,\bf{1})$, respectively, in the sequence $(\c_1,\d_1),\ldots,(\c_p,\t_p)$).
\end{proof}

As a direct consequence, we have the following lemma:

\begin{Lem}\label{Tau}
For every $\u,\V \in \FI$, $\u\mathrel{\tau}\V$ if and only if there exist $m,n \in \mathbb{N}^0$ such that $\u\b\a^n=\V\b\a^m$.
\end{Lem}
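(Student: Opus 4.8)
The plan is to obtain this as an immediate specialisation of Lemma~\ref{Rho}, by unwinding the definition of $\tau$. Recall that $\tau$ is defined as the left annihilator of $\b\rho$, that is, $\tau=\{(\u,\V):\u\b \mathrel{\rho} \V\b\}$. So the very first step is to rewrite the claim $\u \mathrel{\tau} \V$ as the equivalent statement $\u\b \mathrel{\rho} \V\b$; this is just the definition and requires no work.

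Next I would apply Lemma~\ref{Rho} to the pair $(\u\b,\V\b)\in\FI\times\FI$, using $\u\b$ and $\V\b$ in the roles of the two arbitrary elements in that lemma. Lemma~\ref{Rho} then yields that $\u\b \mathrel{\rho} \V\b$ holds if and only if there exist $m,n\in\mathbb{N}^0$ with $(\u\b)\a^n=(\V\b)\a^m$, which by associativity in $\FI$ is exactly $\u\b\a^n=\V\b\a^m$. Chaining the two equivalences — first the definitional one, then the one supplied by Lemma~\ref{Rho} — gives precisely the asserted characterisation, and since Lemma~\ref{Rho} is an \emph{iff} statement both directions transfer simultaneously.

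There is essentially no obstacle here: the lemma is a direct corollary, which is why the text introduces it as a consequence of Lemma~\ref{Rho}. The only point worth a moment's attention is that the substitution introduces no hidden restriction — one must note merely that $\u\b$ and $\V\b$ are genuine elements of $\FI$ to which Lemma~\ref{Rho} applies verbatim, the fixed generator $\b$ being simply absorbed as a right factor. Consequently the proof amounts to the two observations above and can be recorded in a couple of lines.
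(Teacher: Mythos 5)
Your proof is correct and matches the paper's intent exactly: the paper states Lemma~\ref{Tau} with no written proof, introducing it only as "a direct consequence" of Lemma~\ref{Rho}, and your argument — unwind the definition $\u \mathrel{\tau} \V \iff \u\b \mathrel{\rho} \V\b$, then apply Lemma~\ref{Rho} to the elements $\u\b,\V\b \in \FI$ — is precisely that direct consequence spelled out.
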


For any $0 \leq i$, let 
\[
U_i=\{\epsilon,y,yx,\ldots, yx^i\}.
\]

\begin{Lem} \label{prevv}
We have that $(U_i,\epsilon) \mathrel{\tau} (U_1,\epsilon)$ for any $1\leq i$.
\end{Lem}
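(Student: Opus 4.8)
The plan is to use the characterisation of $\tau$ from Lemma~\ref{Tau}: to show $(U_i,\epsilon)\mathrel{\tau}(U_1,\epsilon)$ it suffices to exhibit $m,n\in\mathbb{N}^0$ such that
\[
(U_i,\epsilon)\,\b\,\a^n=(U_1,\epsilon)\,\b\,\a^m.
\]
Recall that $\b=(\{\epsilon,y\},y)$ and $\a=(\{\epsilon,x\},x)$, so that $\a^k=(\{\epsilon,x,\ldots,x^k\},x^k)=(x^k{\da},x^k)$. First I would compute the right-hand products explicitly using the multiplication rule $(A,a)(B,b)=(A\cup aB,ab)$. Multiplying an idempotent $(U,\epsilon)$ by $\b$ gives $(U\cup y\cdot\{\epsilon,y\},\,y)=(U\cup\{y,y^2\},y)$, and then multiplying by $\a^k$ appends the branch $yx,yx^2,\ldots,yx^k$ starting from the leaf $y$, yielding an element whose first component is $U\cup\{y^2\}\cup\{yx,\ldots,yx^k\}$ and whose second component is $yx^k$.

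The key observation is that the prefix-closed set $U_i=\{\epsilon,y,yx,\ldots,yx^i\}$ already contains the chain $yx,\ldots,yx^i$, whereas $U_1=\{\epsilon,y,yx\}$ contains only $yx$. So by choosing the exponents appropriately I would arrange that the two products fill out exactly the same prefix-closed set and end at the same second coordinate. Concretely I expect the correct choice to be $n=1$ and $m=i$ (or a symmetric variant), so that
\[
(U_i,\epsilon)\b\a=\bigl(U_i\cup\{y^2,yx^{i+1}\},\,yx\bigr)
\quad\text{should match}\quad
(U_1,\epsilon)\b\a^{\,?}.
\]
The main step is therefore the bookkeeping: verifying that after adjoining $\{y^2\}$ and the $x$-branch off the leaf $y$, both first components become the common set $\{\epsilon,y,y^2,yx,yx^2,\ldots,yx^{\max}\}$ and both second components agree. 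I would check the second coordinates first (this forces the relationship between $n$ and $m$, namely that $yx^{1+?}=yx^{i+?}$ pins down the exponents), and then confirm the first coordinates coincide once the longer branch is generated on both sides.

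The only real obstacle is getting the exponents exactly right and confirming that multiplying by $\a^k$ really does extend the branch from the leaf $y$ rather than interfering with the root branch $\{\epsilon,x\}$-part; but since $y$ is not a prefix of any $x^j$, the two branches off the root are independent, so no unexpected cancellation or merging occurs. Once the explicit computation confirms a common pair $((W,w))$ on both sides for suitable $n,m$, Lemma~\ref{Tau} immediately gives $(U_i,\epsilon)\mathrel{\tau}(U_1,\epsilon)$, completing the proof. This computation is routine and I would not expect any genuine difficulty beyond careful tracking of which words lie in each first component.
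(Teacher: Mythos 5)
Your strategy coincides with the paper's: invoke Lemma \ref{Tau} and exhibit exponents $n,m$ with $(U_i,\epsilon)\b\a^n=(U_1,\epsilon)\b\a^m$. The problem is that the concrete computations you offer are incorrect, and the exponents you propose do not work. First, in the product $(U,\epsilon)\b$ the set $\{\epsilon,y\}$ must be translated by the second coordinate of the \emph{left} factor, which is $\epsilon$, not by $y$; so $(U,\epsilon)\b=(U\cup\{\epsilon,y\},y)=(U\cup\{y\},y)$, and no element $y^2$ ever appears. Second, the same misreading leads you to believe that the second coordinate of $(U_i,\epsilon)\b\a^n$ depends on $i$ (your ``$yx^{i+?}$''). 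It does not: the branch already sitting inside $U_i$ contributes nothing to the second coordinate, and the correct products are $(U_i,\epsilon)\b\a^n=(U_{\max(i,n)},yx^n)$ and $(U_1,\epsilon)\b\a^m=(U_{\max(1,m)},yx^m)$. Equality of second coordinates therefore forces $n=m$, and equality of first coordinates then forces $n=m\geq i$. In particular, your proposed choice $n=1$, $m=i$ fails for every $i>1$: the two sides end at $yx$ and $yx^i$ respectively, so they are distinct elements of $\FI$.

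The repair is immediate and is exactly what the paper does: take $n=m=i$, so that both products equal $(\{\epsilon,y,yx,\ldots,yx^i\},yx^i)=(U_i,yx^i)$, whence Lemma \ref{Tau} gives $(U_i,\epsilon)\mathrel{\tau}(U_1,\epsilon)$. Your stated plan of ``checking the second coordinates first'' would indeed have pinned down the correct exponents, but only if carried out with the correct multiplication rule; as written, the single computational step on which your proof rests asserts a false identity.
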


\begin{proof}
Since
\[
(U_i,\epsilon)\b\a^{i}=(U_1,\epsilon)\b\a^{i}=(\{\epsilon,y,yx,yx^2,\ldots,yx^{i})
\]
we have by Lemma \ref{Tau} that $(U_i,\epsilon) \mathrel{\tau} (U_1,\epsilon)$.
\end{proof}

\begin{Lem}\label{lem:nfg}
The left annihilator congruence $\tau=l(\b\rho)$ is not finitely generated.
\end{Lem}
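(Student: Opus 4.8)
The plan is to assume, for contradiction, that $\tau$ is generated by a finite set $H\subseteq\FI\times\FI$, which we may take to satisfy $H=H^{-1}$, and to let $N$ be the maximum of the diameters and of the lengths of the second coordinates of all components of pairs in $H$. By Lemma~\ref{prevv} the elements $(U_i,\epsilon)$, $i\geq 1$, all lie in one $\tau$-class. I would derive a contradiction by showing that for $i>2N$ the element $(U_i,\epsilon)$ is \emph{isolated} in $\langle H\rangle$, i.e.\ its $\langle H\rangle$-class is the singleton $\{(U_i,\epsilon)\}$. Since $(U_1,\epsilon)\neq(U_i,\epsilon)$, this contradicts $(U_i,\epsilon)\mathrel{\tau}(U_1,\epsilon)$ and forces $\langle H\rangle\subsetneq\tau$, whence $\tau$ is not finitely generated.

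Everything reduces to the key step: if $i>2N$, $(\mathbf{c},\mathbf{d})\in H$ and $\t\mathbf{c}=(U_i,\epsilon)$ for some $\t=(T,t)\in\FI$, then $\t\mathbf{d}=(U_i,\epsilon)$ as well. First I would use that $(T\cup tC,tc)=(U_i,\epsilon)$ gives $c=t^{-1}$, $tC\subseteq U_i$ and $t\in T\subseteq U_i$; thus the active site lies on the positive ray, $t=yx^a$ with $a+1=l(t)\leq N$. Since $tC\subseteq U_i$, taking preimages shows $C\subseteq(yx^a)^{-1}U_i=\{x^{-a}y^{-1},x^{-a},\ldots,x^{i-a}\}$, a single path; as $C$ is prefix closed and contains the endpoint $c=x^{-a}y^{-1}$ of this path, it must be an interval $\{x^{-a}y^{-1},x^{-a},\ldots,\epsilon,x,\ldots,x^b\}$ with $b\leq N$. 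In particular the deep leaf $yx^i$ does not occur among the words $tC=\{\epsilon,y,\ldots,yx^{a+b}\}=U_{a+b}$ (as $a+b<2N<i$), so $yx^i\in T$ and prefix closure forces $T=U_i$.

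The crux is then to control $\mathbf{d}$, and here I would invoke Lemma~\ref{Tau}: since $(\mathbf{c},\mathbf{d})\in\tau$ there are $n,m$ with $\mathbf{c}\b\a^n=\mathbf{d}\b\a^m$. Computing $\mathbf{c}\b\a^n$ from the shape of $C$ shows its first coordinate consists of a set of powers of $x$ together with the single non-power word $x^{-a}y^{-1}$; matching this against $\mathbf{d}\b\a^m=(D\cup dU_m,dyx^m)$ forces $d$ to be that same ``turn'', $d=x^{-a}y^{-1}=c$, with $m=n$, and $D$ again an interval of the above shape of forward extent $b'\leq N$. Consequently $td=tc=\epsilon$ and $tD=U_{a+b'}$ with $a+b'<2N<i$, so $\t\mathbf{d}=(U_i\cup U_{a+b'},\epsilon)=(U_i,\epsilon)$, which is the isolation statement. (The degenerate case $t=\epsilon$, where $\mathbf{c}=(U_j,\epsilon)$ is idempotent, is handled by the same Lemma~\ref{Tau} computation and likewise returns $(U_i,\epsilon)$.)

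The main obstacle is this final rigidity step: one must show that the finitely many bounded generators are powerless to move $(U_i,\epsilon)$, i.e.\ that the partner $\mathbf{d}$ of any $\mathbf{c}$ that fits on the pure positive ray is pinned to $d=c$ with no room to introduce off-ray or deeper material. This is exactly where the special form of $\tau$-pairs supplied by Lemma~\ref{Tau} — the fact that the second coordinates of $\mathbf{c}$ and $\mathbf{d}$ can differ only by a word $yx^{\,n-m}y^{-1}$, which in the present configuration is forced to be trivial — does all the work, and it is what makes the deep leaf $yx^i$ impossible to dismantle with a fixed finite generating set.
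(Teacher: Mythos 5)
Your proof is correct and takes essentially the same approach as the paper's: you isolate the long-path element $(U_i,\epsilon)$ by showing that any generator $(\c,\d)\in H$ occurring as a right factor of it must, via the normal form of $\tau$-pairs in Lemma~\ref{Tau}, have $\d$ pinned down with $d=c$, so that the $\langle H\rangle$-class of $(U_i,\epsilon)$ is a singleton, contradicting Lemma~\ref{prevv}. The only differences are cosmetic: you use a diameter bound ($i>2N$) where the paper bounds cardinalities ($k>|S|$), and you spell out the ``brief analysis'' forcing $d=c$ that the paper leaves terse.
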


\begin{proof} Suppose for contradiction that
 $H$ is a finite symmetric subset of $\tau$ generating $\tau$ and let $k$ be a natural number such that for every $((S,s),(T,t)) \in H$ we have that $k > \left|S\right|$.

Now suppose that $(U_k,\epsilon)=\t\c$ where $(\c,\d) \in H$ and $\t \in \FI$.
Then $c^{-1}=t \in U_k$ and $c^{-1}C \subseteq U_k$.
Note that since $c \in C$, $c^{-1}C$ is also prefix closed.
The facts that $U_k$ is a single path and $\left|C\right|<k$ imply that $c^{-1}C \subseteq \{\epsilon,y,yx,\ldots,yx^{k-1}\}$.
However, $U_k=T \cup c^{-1}C$, and as a consequence we have that $yx^k \in T$, so $T=U_k$.

We also have $\c \mathrel{\tau} \d$, so there exist $i,j$ such that $\c\b\a^i=\d\b\a^j$.
By multiplying this equality from the right by an appropriate power of $\a$ we can ensure that $i,j>k$.
Note that since $C \subseteq cU_k$, the first component of $\c\b\a^i$ is $\{c,cy,cyx,\ldots,cyx^i\}$, whereas the first component of $\d\b\a^j$ contains the vertices $\{d,dy,dyx,dyx^2,\ldots,dyx^j\}$.
Given that $c^{-1}\in U_k$, a brief analysis shows this can only happen if $d=c$, and then $c^{-1}D \subseteq \{\epsilon,y,\ldots, yx^{k-1}\}$ follows from the facts that
\[
c^{-1}D \subseteq c^{-1} \{c,cy,cyx,\ldots, cyx^{i}\}=\{\epsilon,y,yx,\ldots, yx^{i}\},
\]
$c^{-1}D$ is prefix closed and $\left|c^{-1}D\right|<k$.
So altogether we obtain that $T=U_k$ and $tD=c^{-1}D \subseteq \{\epsilon,y,yx,\ldots,yx^{k-1}\} \subseteq U_k$, so $T \cup tD=U_k$ and as a consequence we conclude that $\t\d=(U_k,\epsilon)$.
That is, applying elements of $H$ to right factors of $(U_k,\epsilon)$ does not change $(U_k,\epsilon)$, so the $\tau$-class of $(U_k,\epsilon)$ is singleton, that is, $(U_k,\epsilon) \not\mathrel{\tau} (U_{k+1},\epsilon)$, contradicting Lemma \ref{prevv}.
\end{proof}

\begin{Thm}
Let $\left|\Omega\right|>1$.
Then the free inverse monoid $\FI$ and the free ample monoid $\FA$ are neither left nor right coherent.
The free left ample monoid $\FLA$ is right coherent, but not left coherent.
\end{Thm}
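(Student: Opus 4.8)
The plan is to combine the negative result already obtained for $\FI$ with the duality properties of the three monoids, and then to transport the single counterexample of Section~\ref{sec:negative} into the two submonoids $\FA$ and $\FLA$. Recall that the left-handed form of the Chase-type condition recalled in Section~\ref{sec:intro} requires, for left coherency, that $l(\b\rho)$ be finitely generated for every finitely generated left congruence $\rho$ and every $\b$. Lemma~\ref{lem:nfg} produces a finitely generated left congruence $\rho$ on $\FI$ and an element $\b$ for which $l(\b\rho)$ is \emph{not} finitely generated, so $\FI$ is not left coherent; since $\FI$ carries the left--right duality induced by $x\mapsto x^{-1}$, it is not right coherent either. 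The same duality argument (using the left--right duality of $\FA$ noted in Section~\ref{sec:prelims}) will settle $\FA$ once it is shown to be not left coherent, and for $\FLA$ right coherency is already in hand, so only the failure of left coherency remains. Note that all of this requires $|\Omega|>1$, since the construction uses two distinct generators $x\neq y$.

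It therefore remains to show that $\FA$ and $\FLA$ are not left coherent, which I would do by re-running the construction of Section~\ref{sec:negative} \emph{inside} these submonoids. The key point is that every element used — the generators $\a=(\{\epsilon,x\},x)$ and $\b=(\{\epsilon,y\},y)$, the idempotents $(U_i,\epsilon)$, and all the products $\a^n$, $\b\a^n$ — already lies in $\FLA\subseteq\FA$. First I would check that $\rho$ and $\tau$ restrict cleanly: the characterisations $\u\mathrel\rho\V \iff \u\a^n=\V\a^m$ and $\u\mathrel\tau\V \iff \u\b\a^n=\V\b\a^m$ of Lemmas~\ref{Rho} and \ref{Tau} hold verbatim in $S=\FLA$ and $S=\FA$, because powers of $\a,\b$ lie in $S$ and every $H$-sequence formed from $(\a,\mathbf 1)$ stays inside $S$. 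Consequently the left congruence generated by $(\a,\mathbf 1)$ in each submonoid is the restriction of $\rho$, its left annihilator of $\b$ is the restriction of $\tau$, and Lemma~\ref{prevv} still yields $(U_i,\epsilon)\mathrel\tau(U_1,\epsilon)$ for every $i$.

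The heart of the argument is to re-prove the analogue of Lemma~\ref{lem:nfg} in the submonoid: a hypothetical finite symmetric generating set $H$ of $\tau$, with $k$ larger than the size of every first component occurring in $H$, cannot connect $(U_k,\epsilon)$ to $(U_{k+1},\epsilon)$. Here the argument actually \emph{simplifies}. If $(U_k,\epsilon)=\t\c$ with $(\c,\d)\in H$, then comparing second coordinates gives $tc=\epsilon$ with $t,c\in\Omega^*$, forcing $t=c=\epsilon$; hence $\t$ and every admissible generator $\c=(C,\epsilon)$ are idempotents and $T\cup C=U_k$. Since $U_k\subseteq\Omega^*$, the components $T,C\subseteq U_k$ acquire no letter from $\Omega^{-1}$, so they are initial segments of the chain $U_k$; as $\left|C\right|<k$ this forces $T=U_k$, and analysing $\c\mathrel\tau\d$ via Lemma~\ref{Tau} forces $\d=(D,\epsilon)$ with $D=U_j\subseteq U_k$, whence $\t\d=(U_k,\epsilon)$. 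Thus the $\tau$-class of $(U_k,\epsilon)$ is a singleton, contradicting $(U_k,\epsilon)\mathrel\tau(U_{k+1},\epsilon)$. I expect the main obstacle to be exactly this step in the $\FA$ case: one must verify that no first component can escape into words containing a letter of $\Omega^{-1}$ and so defeat the length count, together with confirming that restricting $\rho$ and $\tau$ to the submonoids leaves both the congruences and their annihilators intact. Once these are secured, $\FA$ and $\FLA$ are not left coherent; the duality of $\FA$ then gives that it is not right coherent, and combining the failure of left coherency for $\FLA$ with our main theorem completes the proof that $\FLA$ is right but not left coherent.
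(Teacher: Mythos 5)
Your proposal is correct and takes essentially the same route as the paper: the paper's proof likewise deduces the negative results from Lemma~\ref{lem:nfg} together with the left--right duality of $\FI$ and $\FA$, and observes that the counterexample of Section~\ref{sec:negative} runs verbatim inside $\FLA$ and $\FA$ with exactly the simplification you identify, namely that comparing second coordinates forces $t=c=\epsilon$.
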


\begin{proof}
Lemma \ref{lem:nfg} shows that $\FI$ is not left coherent. Exactly the same argument applies to show that $\FLA$ and $\FA$
are not left coherent, simplifying further, since $c=t=\epsilon$. By duality, $\FI$ and $\FA$ cannot be right coherent. 
\end{proof}

\end{document}